\documentclass[reqno, 11pt]{amsart}

\usepackage[english]{babel}
\usepackage{amsmath}
\usepackage{amssymb}
\usepackage{mathrsfs}
\usepackage{enumerate}
\usepackage{ifthen}
\usepackage{bbm}
\usepackage{color}
\usepackage{graphicx}
\usepackage{geometry}
\provideboolean{shownotes}
\setboolean{shownotes}{true}
\usepackage{color}
\usepackage{setspace}
\usepackage{mathtools}
\usepackage{multicol}
\usepackage[T1]{fontenc}
\usepackage[utf8]{inputenc}
\usepackage{comment}
\usepackage{thmtools}

\newtheorem{theorem}{Theorem}[section]
\newtheorem{proposition}[theorem]{Proposition}
\newtheorem{lemma}[theorem]{Lemma}

\newtheorem{definition}[theorem]{Definition}

\theoremstyle{remark}
\newtheorem{remark}[theorem]{Remark}

\newcommand{\e}{\varepsilon}
\newcommand{\R}{\mathbb{R}}

\newcommand{\Z}{\mathbb{Z}}
\newcommand{\dive}{\mathop{\mathrm {div}}}
\newcommand{\curl}{\mathop{\mathrm {curl}}}

\newcommand{\de}{\,\mathrm{d}}
\def\cq{{\mathcal Q}}
\def\cp{{\mathcal P}}
\DeclareMathOperator*{\esssup}{ess\,sup}
\def\xp{{\tilde{\mathcal X}}}
\def\yp{{\tilde{\mathcal Y}}}
\def\zp{{\tilde{\mathcal Z}}}
\def\wp{{\tilde{\mathcal W}}}
\def\vp{{\tilde{\mathcal V}}}

\numberwithin{equation}{section}

\subjclass{Primary: 35Q35, 76W05. Secondary: 35A01, 76N10}
\keywords{Compressible magnetohydrodynamics, large bulk viscosity, strong solutions, critical regularity, magnetic reconnection}
\usepackage{hyperref}

\allowdisplaybreaks

\begin{document}

\title[Large bulk viscosity limit for compressible MHD equations]{Large bulk viscosity limit for compressible MHD equations in critical Besov spaces}

%

\author[G. Ciampa]{Gennaro Ciampa}
\address[G.\ Ciampa]{DISIM - Dipartimento di Ingegneria e Scienze dell'Informazione e Matematica\\ Universit\`a  degli Studi dell'Aquila \\Via Vetoio \\ 67100 L'Aquila \\ Italy}
\email[]{\href{gciampa@}{gennaro.ciampa@univaq.it}}

\author[D. Donatelli]{Donatella Donatelli}
\address[D.\ Donatelli]{DISIM - Dipartimento di Ingegneria e Scienze dell'Informazione e Matematica\\ Universit\`a  degli Studi dell'Aquila \\Via Vetoio \\ 67100 L'Aquila \\ Italy}
\email[]{\href{ddonatelli@}{donatella.donatelli@univaq.it}}

\author[G. Pellecchia]{Giada Pellecchia}
\address[G.\ Pellecchia]{DISIM - Dipartimento di Ingegneria e Scienze dell'Informazione e Matematica\\ Universit\`a  degli Studi dell'Aquila \\Via Vetoio \\ 67100 L'Aquila \\ Italy}
\email[]{\href{gpellecchia@}{giada.pellecchia@graduate.univaq.it}}


\begin{abstract}
We study the large bulk viscosity limit for the compressible magnetohydrodynamics (MHD) equations in two and three dimensions. For arbitrarily large initial data in critical Besov spaces, we prove the global well-posedness of strong solutions and establish their convergence, with explicit quantitative rates, to solutions of the incompressible MHD system, as the bulk viscosity parameter tends to infinity. As an application of this singular-limit analysis, we construct global smooth solutions to the compressible MHD equations whose magnetic field undergoes reconnection, thereby extending to the compressible regime the reconnection scenarios previously identified for incompressible flows.
\end{abstract}

\maketitle

\section{Introduction}

We consider the Cauchy problem for the compressible Magnetohydrodynamic system
\begin{equation}\label{MHDcomp}
    \begin{cases}
        \rho_t+\dive(\rho v)=0, \\
        \rho v_t+\rho v\cdot\nabla v-\mu\Delta v-(\lambda+\mu)\nabla\dive v +\nabla P=b\cdot \nabla b-\frac{1}{2}\nabla(|b|^2), \\
         b_t+b\dive v+v\cdot\nabla b-b\cdot\nabla v=\eta\Delta b,\\
         \dive b=0,\\
         \rho|_{t=0} =\rho_0, \quad v|_{t=0}=v_0, \quad b|_{t=0}=b_0,
    \end{cases}
    \tag{C-MHD}
\end{equation}
posed in the whole space $\R^d$, with $d=2,3$. The unknowns are the density $\rho$, the fluid velocity $v$, the magnetic field $b$. We assume that the pressure function $P$ is of the form $P=P(\rho)=A\rho^\gamma$ with $\gamma>1$, so that $P$ is strictly increasing.
The data of the problem are the initial state $(\rho_0,v_0,b_0)$, with $b_0$ being a divergence-free vector field. The shear and volume viscosity coefficients $\lambda$ and $\mu$ satisfy the standard strong parabolicity assumption
\begin{equation}
\mu>0\quad\text{and}\quad \nu:=\lambda+2\mu>0,
\end{equation}
and $\eta>0$ is the magnetic diffusivity. The equations \eqref{MHDcomp} describe the dynamics of an electrically conducting fluid (such as the solar wind, the heliosphere plasma, or the Earth’s magnetosphere) under the influence of a magnetic field. The motion of the fluid and the magnetic field are mutually coupled. On the one hand, the magnetic field acts on the fluid through the Lorentz force, accelerating the flow in a direction perpendicular to the magnetic field. On the other hand, the motion of the fluid modifies the magnetic field via magnetic induction. Consequently, the fluid motion is governed by the Navier–Stokes equations, whose momentum balance includes the Lorentz force, and is coupled with a simplified form of Maxwell’s equations derived under the assumption of a perfectly conducting medium. For a detailed physical derivation of the \eqref{MHDcomp} system see for instance \cite{Bellan}. The mathematical analysis of the system (C-MHD) dates back to the 1980s, when Kawashima \cite{kawashima1984smooth} proved the global existence of smooth solutions in two dimensions for small perturbations of constant states. Since then, the well-posedness theory has advanced significantly, often building on techniques developed for the compressible Navier–Stokes equations. First of all, local existence of strong solutions was obtained in \cite{JPG}. Global strong solutions for small data in $H^3(\R^3)$ were later established in \cite{LY,ZZ}, while global classical solutions with small initial energy but allowing large oscillations and vacuum were obtained in \cite{LXZ-sima}. This latter was further extended in \cite{HHPZ} allowing large initial data under the assumptions $\gamma\sim 1$ and large resistivity. In two dimensions, global classical solutions with small energy and improved decay estimates were derived in \cite{LSX}. For weak solutions, global renormalized solutions for general large data were constructed in \cite{FY,HW-CMP, HW-ARMA}. In the critical Besov setting, global strong solutions near equilibrium were obtained in \cite{Hao}, while local well-posedness under a positive lower bound for the density was proved in \cite{JPG}.\\

In the present work, we are interested in initial configurations that may be arbitrarily large in critical Besov norms naturally associated with the scaling of the system. In order to make this statement precise, let us recall the critical spaces associated with the scaling of the compressible system \eqref{MHDcomp}.  
Writing the density as $\rho = 1 + a$, the natural scaling of the equations is
$$
a_\kappa(t,x) = a(\kappa^2 t,\kappa x), \qquad
v_\kappa(t,x) = \kappa\, v(\kappa^2 t,\kappa x), \qquad
b_\kappa(t,x) = \kappa\, b(\kappa^2 t,\kappa x).
$$
A Banach space is said to be critical for \eqref{MHDcomp} if its norm is invariant under this transformation.  This leads to the classical compressible MHD critical framework
\begin{equation}\label{def:spazi critici}
a_0 \in \dot B^{\frac{d}{p}}_{p,1}(\R^d), \qquad
v_0,\, b_0 \in \dot B^{\frac{d}{p}-1}_{p,1}(\R^d).
\end{equation}

Our goal is to study the global well-posedness of \eqref{MHDcomp} for initial data in the critical spaces \eqref{def:spazi critici}, under the assumption that the bulk viscosity is sufficiently large, and to analyze the asymptotic behavior of the solutions as this parameter tends to infinity.
We remark that the bulk viscosity does not affect the natural scaling of the equations and therefore plays no role in the definition of the critical functional framework.
We prove that as $\lambda \to \infty$, the solution of the compressible system \eqref{MHDcomp} converges to the solution of the incompressible MHD equations
\begin{equation}\label{MHDincomp}
    \begin{cases}
     V_t + V\cdot \nabla V - \mu \Delta V+\nabla\Pi=B\cdot \nabla B, \\
     B_t+V\cdot \nabla B-B\cdot \nabla V-\eta\Delta B=0,\\
     \dive V=0, \qquad \dive B=0, \\
  V|_{t=0} = V_0,\qquad B|_{t=0} = B_0,\\
    \end{cases}
    \tag{I-MHD}
\end{equation}
with $V_0$ and $B_0$ denoting the Leray-Helmholtz projection of $v_0$ and $b_0$, respectively, on divergence-free vector fields.
Since $\dive b_0=0$, we simply have $B_0=b_0$. However, we keep the notation with capital letters to distinguish the incompressible framework from the compressible one. We point out that the natural critical spaces for the initial data associated with the scaling of \eqref{MHDincomp} are 
\begin{equation}
V_0,\, B_0 \in \dot B^{\frac{d}{p}-1}_{p,1}(\R^d).
\end{equation}

Before stating our main theorem, we recall several well-posedness results for \eqref{MHDincomp}. In the resistive and viscous case $(\eta>0,\ \mu>0)$, global finite-energy weak solutions and local strong solutions to \eqref{MHDincomp} in two and three dimensions were constructed in \cite{DL72}. For smooth initial data, global smoothness and uniqueness in two dimensions also follow from their analysis. In three dimensions, uniqueness of local strong solutions and several regularity criteria were obtained in \cite{ST}. Global mild solutions for small initial data were established in \cite{MYZ, WW}. This situation is somewhat reminiscent of the available results for the Navier-Stokes equations. In two dimensions, global weak solutions were obtained in \cite{K} for divergence-free initial data in $L^2$. The ideal case $\eta=0$ has been extensively studied, with essentially sharp local well-posedness results in Sobolev spaces established in \cite{FMRR, Feff}. When both viscosity and resistivity vanish $(\mu=\eta=0)$, local strong solutions for initial data in $H^s(\R^d)$, $s>d/2+1$, were proved in \cite{Sch, Secchi}.

Global smooth solutions in the ideal regime for small perturbations of constant states are available in \cite{LXZ, LZ, RWXZ, XZ}. In addition, global well-posedness for axisymmetric data in the three-dimensional inviscid or non-resistive case was shown in \cite{AHH, H, Lei}. Finally, Yudovich-type local well-posedness in the 2D ideal setting was proved in \cite{Hmidi}.\\
\\
We denote by $\cp$ and $\cq$ the standard Leray projectors. Then, our first main result reads as follows. 

\begin{restatable}{mainthm}{MainTheoremA}\label{thm:main}
    Let $d\geq2$ and $c_0>0$. Assume that the initial conditions satisfy
    \begin{equation}\label{dato iniziale main}
        v_0,b_0\in\dot B_{2,1}^{d/2-1}(\R^d), \quad
        a_0:=(\rho_0-1)\in\dot B_{2,1}^{d/2}(\R^d), \quad  c_0\leq \rho_0\leq c_0^{-1},
    \end{equation}
and that the initial data $V_0:=\cp v_0$ and $B_0:=\cp b_0 $ generates a unique global solution $V,B\in C_b(\R_+,\dot B_{2,1}^{d/2-1}(\R^d))$ of \eqref{MHDincomp}. Denote by
\begin{equation*}
    M:=\|V\|_{L^\infty(\R_+;\dot B_{2,1}^{d/2-1}) }+\|V_t,\mu \nabla^2 V\|_{L^1(\R_+;\dot B_{2,1}^{d/2-1}) }+\|B\|_{L^\infty(\R_+;\dot B_{2,1}^{d/2-1}) }+\|B_t,\eta \nabla^2 B\|_{L^1(\R_+;\dot B_{2,1}^{d/2-1}) }.
\end{equation*}
There exists a (large) universal constant $C$ such that if $\nu\geq \mu$ and
    \begin{equation*}
        Ce^{C(M+M^2)}(\|\cq u_0\|_{\dot B_{2,1}^{d/2-1}}+\|a_0\|_{\dot B_{2,1}^{d/2-1} }+\nu\|a_0\|_{\dot B_{2,1}^{d/2} } +M^2+\mu^2)\leq \sqrt{\mu\nu},
    \end{equation*}
then \eqref{MHDcomp} admit a unique global-in-time solution $(\rho,v,b) $ such that 
    \begin{equation*}
        \begin{aligned}
            &v \in C_b(\R_+;\dot B_{2,1}^{d/2-1}), \quad v_t,\nabla^2 v\in L^1(\R_+;\dot B_{2,1}^{d/2-1}), \\
            &b \in C_b(\R_+;\dot B_{2,1}^{d/2-1}), \quad b_t,\nabla^2 b\in L^1(\R_+;\dot B_{2,1}^{d/2-1}), \\
            &a:=(\rho-1)\in C(\R_+;\dot B_{2,1}^{d/2-1}\cap\dot B_{2,1}^{d/2})\cap L^2(\R_+;\dot B_{2,1}^{d/2}).
        \end{aligned}
    \end{equation*}
    In addition, the compressible components satisfy the a priori estimate
    \begin{equation*}
        \begin{aligned}
            \|\cq v\|_{L^\infty(\R_+;\dot B_{2,1}^{d/2-1})}&+\|\nu\nabla^2\cq v\|_{L^1(\R_+;\dot B_{2,1}^{d/2-1})}+\|a\|_{L^\infty(\R_+;\dot B_{2,1}^{d/2-1}) }+\nu\|a\|_{L^\infty(\R_+;\dot B_{2,1}^{d/2} )} \\
            &\lesssim e^{C(M+M^2)}(\|a_0\|_{\dot B_{2,1}^{d/2-1}}+\nu\|a_0\|_{\dot B_{2,1}^{d/2}}+\|\cq u_0\|_{\dot B_{2,1}^{d/2-1}}+M^2+\mu^2).
        \end{aligned}
    \end{equation*}
Moreover, if $a_0=0$ then the triple $(\rho,v,b)$ converges, as $\lambda\to\infty$, towards the incompressible solution $(1,V,B)$ with the following quantitative rates
    \begin{equation}\label{estconv}
        \begin{aligned}
            \sqrt{\dfrac{\nu}{\mu}} &\biggl( \| \rho-1\|_{L^\infty(\R_+; \dot B_{2,1}^{d/2} )}+\|\nabla^2\cq v\|_{L^1(\R_+; \dot B_{2,1}^{d/2-1} )}\biggr)
            +\|\cp v-V\|_{L^\infty (\R_+; \dot B_{2,1}^{d/2-1} ) }+\|b-B\|_{L^\infty (\R_+; \dot B_{2,1}^{d/2-1} ) }\\
           & +\|\cp v_t-V_t,\mu \nabla^2(\cp v-V) \|_{L^1(\R_+; \dot B_{2,1}^{d/2-1} )}
           +\|b_t-B_t,\eta \nabla^2(b-B) \|_{L^1(\R_+; \dot B_{2,1}^{d/2-1} )}\leq C\sqrt{\dfrac{\mu}{\nu}}.
        \end{aligned}
    \end{equation}
Furthermore, if $d=2$ we have an explicit bound on $M$ in terms of the norm of the initial data, which is
\begin{equation*}
M\leq C\left( \|V_0\|_{\dot B_{2,1}^0}+\|B_0\|_{\dot B_{2,1}^0} \right)\exp\biggl( C\biggl( \frac{1}{\mu^4}+\frac{1}{\mu^3\eta}+\frac{1}{\eta^3\mu}+\frac{1}{\eta^4}\biggr)\left(\|V_0\|_{L^2}^4+\|B_0\|_{L^2}^4 \right)\biggr).
\end{equation*}
\end{restatable}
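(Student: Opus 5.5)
We follow Danchin--Mucha's strategy for the large volume viscosity limit of compressible Navier--Stokes, adapted to include the magnetic field. We write $\rho=1+a$ and decompose $v=\cp v+\cq v$. We then compare $(\cp v,b)$ with the incompressible solution $(V,B)$ by setting $w:=\cp v-V$ and $h:=b-B$, so that $v=V+w+\cq v$ and $b=B+h$. The proof is a bootstrap (continuity) argument, combined with the local well-posedness of \cite{JPG} and a standard blow-up criterion. We assume on $[0,T]$ that the compressible part
\[
X(T):=\|\cq v\|_{L^\infty_T\dot B^{d/2-1}_{2,1}}+\nu\|\nabla^2\cq v\|_{L^1_T\dot B^{d/2-1}_{2,1}}+\|a\|_{L^\infty_T\dot B^{d/2-1}_{2,1}}+\nu\|a\|_{L^\infty_T\dot B^{d/2}_{2,1}}
\]
is at most twice the right-hand side of the claimed a priori bound. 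We also assume that the incompressible deviation $Y(T)$, meaning the left-hand side of \eqref{estconv} restricted to $w,h$, is bounded by a constant multiple of $\sqrt{\mu/\nu}$ times that quantity. We then show that both bounds hold with factor $1$, which closes the continuity argument and gives global existence. Uniqueness follows from a standard stability estimate in a space with one derivative less.

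\textbf{Step 1: compressible part.} We apply $\cq$ to the momentum equation divided by $\rho$ and couple it with the mass equation. This gives a linearized system of the form
\[
a_t+\dive\cq v=f,\qquad (\cq v)_t-\nu\Delta\cq v+P'(1)\nabla a=g .
\]
Its source terms contain $v\cdot\nabla a$, $a\dive v$, $\cq(v\cdot\nabla v)$, the $\cq$-part of the viscous terms multiplied by $(1/\rho-1)$, and the Lorentz term $\cq\big(\rho^{-1}(b\cdot\nabla b-\tfrac12\nabla|b|^2)\big)$. We estimate this system through the effective velocity $\cq v+\nu^{-1}\nabla(-\Delta)^{-1}\ldots$, or equivalently by the low/high frequency analysis of Danchin at threshold $\sim\nu$. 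In the regime $\nu$ large this yields parabolic smoothing for $\cq v$ with gain $\nu$, and damping for $\nu a$ in $\dot B^{d/2}_{2,1}$.

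The terms built only from $V,B$ feed the $M^2$ contribution in the bound. For instance, $\cq(V\cdot\nabla V)$ and $\cq(B\cdot\nabla B-\tfrac12\nabla|B|^2)$ are bounded in $L^1\dot B^{d/2-1}_{2,1}$ by $M^2/\mu$ via product laws and interpolation. The $\mu^2$ contribution comes from $\mu\Delta v$ weighted by $a$. Transport by $\cp v$ is handled with Gronwall, which produces the factor $e^{C(M+M^2)}$. Cross terms involving $\cq v$ or $a$ are quadratic and carry $\nu^{-1}$, so they are absorbed once $X\ll\sqrt{\mu\nu}$. That smallness is exactly what the hypothesis provides.

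\textbf{Step 2: incompressible deviation.} The pair $(w,h)$ satisfies a linearized incompressible MHD system around $(V,B)$. Its forcing consists of $\cp$ of terms containing $\cq v$ or $a$, namely $\cp(\cq v\cdot\nabla v)$, $\cp(a\,v_t)$, $h\dive\cq v$, $b\cdot\nabla\cq v$, and so on. The maximal regularity of the heat semigroup in $\dot B^{d/2-1}_{2,1}$, together with Gronwall's lemma in $M$, bounds $Y$ by $e^{C(M+M^2)}$ times these forcings.

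Each forcing term involves either $\|\nabla^2\cq v\|_{L^1}\lesssim X/\nu$ or $\|a\|_{L^\infty\dot B^{d/2}}\lesssim X/\nu$. It can also involve $\|\nabla\cq v\|_{L^2\dot B^{d/2-1}}$, which by interpolation is $\lesssim X/\sqrt{\mu\nu}$ against $\mu$-weighted norms. This yields $Y\lesssim e^{C(M+M^2)}X/\sqrt{\mu\nu}$. When $a_0=0$, Step 1 gives $X\lesssim e^{C(M+M^2)}(\|\cq v_0\|+M^2+\mu^2)$, which is independent of $\nu$. Hence $Y\le C\sqrt{\mu/\nu}$ and $\sqrt{\nu/\mu}\,\nu^{-1}X\le C\sqrt{\mu/\nu}$, which is \eqref{estconv}.

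\textbf{Step 3: the bound on $M$ when $d=2$.} We use the $L^2$ energy equality for \eqref{MHDincomp}. It gives $\|V,B\|_{L^\infty L^2}^2+\mu\|\nabla V\|_{L^2L^2}^2+\eta\|\nabla B\|_{L^2L^2}^2\le\|V_0,B_0\|_{L^2}^2$. Next we run the $\dot B^0_{2,1}$ maximal regularity estimate with the paraproduct bound
\[
\|V\cdot\nabla V\|_{\dot B^{-1}_{2,1}}\lesssim\|V\|_{\dot B^{0}_{2,1}}^{1/2}\|V\|_{\dot B^{1}_{2,1}}^{1/2}\,\|V\|_{L^2}^{1/2}\|\nabla V\|_{L^2}^{1/2},
\]
and analogous bounds for the coupling terms. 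Young's inequality absorbs $\|V\|_{\dot B^{2}_{2,1}}$ into the dissipation and leaves a Gronwall weight $\int(\mu^{-3}+\dots)\|V,B\|_{L^2}^2\|\nabla V,\nabla B\|_{L^2}^2$. The energy inequality bounds this weight by the stated $\mu^{-4},\mu^{-3}\eta^{-1},\dots$ combinations times $\|V_0,B_0\|_{L^2}^4$.

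\textbf{Main obstacle.} The hardest part is Step 1, specifically obtaining the estimate for the $(a,\cq v)$ system uniformly in $\nu$ with the correct weight on $a$ in both $\dot B^{d/2-1}$ and $\dot B^{d/2}$. The Lorentz force term and the transport by the large incompressible field must be controlled without losing powers of $\nu$. We plan to handle the transport term by commutator estimates in Lagrangian-free form, localizing each dyadic block $\dot\Delta_j$ and using $\|\nabla \cp v\|_{L^1 L^\infty}\lesssim M/\mu$. We plan to exploit the $\nu^{-1}$ smallness of the pressure coupling at high frequencies, following Danchin--Mucha.
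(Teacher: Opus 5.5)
Your proposal follows essentially the same route as the paper. It runs a Danchin--Mucha-type bootstrap on the compressible functional for $(a,\cq v)$, via coupled low/high-frequency analysis at the threshold $2^j\nu\sim 1$, and on the incompressible deviation $(\cp v-V,\,b-B)$, via heat-type estimates and Gronwall in $M$. The $\sqrt{\mu/\nu}$ rate comes, as in the paper, from $\|\cq v\|_{L^2\dot B^{d/2}_{2,1}}\lesssim X/\sqrt{\nu}$. The $d=2$ bound on $M$ likewise comes from the energy equality, the $L^2$--$\dot H^1$ interpolation into $\dot B^{1/2}_{2,1}$, and Gronwall at the $\dot B^0_{2,1}$ level.

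Your displayed product bound in Step 3 should be shifted up by one derivative, i.e.\ $\|V\cdot\nabla V\|_{\dot B^{0}_{2,1}}\lesssim \|V\|_{\dot B^{1}_{2,1}}^{1/2}\|V\|_{\dot B^{2}_{2,1}}^{1/2}\|V\|_{L^2}^{1/2}\|\nabla V\|_{L^2}^{1/2}$. Only then can it be absorbed into $\mu\|V\|_{L^1\dot B^2_{2,1}}$.

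The only real variant is that you divide the momentum equation by $\rho$. This is harmless: it trades the paper's term $\cp\bigl(a(\cq u_t+\nabla a)\bigr)$ for $\cp\bigl((\rho^{-1}-1)\nu\Delta\cq v\bigr)$, which is of size $\lesssim X^2/\nu$ in $L^1\dot B^{d/2-1}_{2,1}$. You should then drop $\cp(a\,v_t)$ from your Step 2 list. In the undivided form that term cannot be bounded through $\|v_t\|_{L^1\dot B^{d/2-1}_{2,1}}$, because $\cq v_t$ alone is not controlled uniformly in $\nu$; this is exactly why the paper rewrites it using $\cp(a\nabla a)=0$.
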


The theorem is stated in a general spatial dimension $d$ under the assumption that the corresponding incompressible system admits a global strong solution. While this remains an assumption in dimension $d\geq 3$, it is automatically satisfied in dimension $d=2$, where global well-posedness of \eqref{MHDincomp} is known. Accordingly, in the three-dimensional case we assume that the initial data $V_0,B_0$ generate a global strong solution to \eqref{MHDincomp}, and we then analyze the large bulk viscosity regime for the compressible system \eqref{MHDcomp}.
Nonetheless, global well-posedness results for \eqref{MHDincomp} have been established in a variety of settings, ranging from small initial data regimes \cite{DL72, fujii, MY, ST, WD} to frameworks relying on suitable structural assumptions on the initial data \cite{HHW, LZZ, LP}. Consequently, Theorem \ref{thm:main} applies within each of these contexts.\\

The proof of Theorem \ref{thm:main} is inspired by the approach introduced in \cite{danchin2016compressiblenavierstokeslarge} for the compressible Navier–Stokes equations. The starting point is a global-in-time solution to the incompressible system, which is used as a reference flow. One then considers a local strong solution to the compressible equations and studies the evolution of the differences between the two solutions.
By decomposing the velocity through the Leray projectors, the analysis separates the compressible and divergence-free components. Energy estimates are derived for each part and then combined. When the bulk viscosity parameter is sufficiently large, these estimates can be closed, leading to uniform control of the solution and allowing the use of standard continuation criteria. In the present work, we adapt this method to the MHD equations and show that the same approach remains effective once the magnetic field is taken into account. The key fact is that the magnetic field scales in the same way as the velocity: this allows to adapt the method to this setting, preserving the structure of the energy estimates. Moreover, the divergence-free property of the magnetic field must be explicitly taken into account in the definition of the energy functionals controlling the incompressible component of the solution. By carefully combining the estimates for the divergence-free part of the velocity with those for the magnetic field, we are able to close the energy inequalities and rigorously handle the nonlinear interactions between velocity and magnetic field.\\


The $L^2$-based result of \cite{danchin2016compressiblenavierstokeslarge} was later extended to $L^p$-critical Besov spaces in \cite{Chen-Zhai:Lp}. Building on this refinement, we are able to carry out an analogous extension for the compressible MHD system, thereby obtaining global strong solutions and quantitative stability estimates in the $L^p$-critical framework as well. The corresponding statement reads as follows.
\begin{restatable}{mainthm}{MainTheoremB}\label{thm:Lp intro}
Let $d\geq2$ and $c_0>0$. Assume that the initial conditions satisfy
\begin{equation*}
    \begin{aligned}
            &a_0^\ell:=(\rho_0-1)^\ell\in \dot B_{2,1}^{d/2-1}(\R^d), \quad a_0^h:=(\rho_0-1)^h\in\dot B_{p,1}^{d/p}(\R^d), \quad c_0\leq\rho_0\leq c_0^{-1},\\
            &\cp v_0\in\dot B_{p,1}^{d/p-1}(\R^d), \quad \cq v_0^\ell\in \dot B_{2,1}^{d/2-1}(\R^d), \quad \cq v_0^h\in \dot B_{p,1}^{d/p-1}(\R^d),\quad b_0\in\dot B_{p,1}^{d/p-1}(\R^d),
    \end{aligned}
\end{equation*}
with $2\leq p< 4$ if $d=2$, and $2\leq p\leq \min\{4, 2d/(d-2)\}$ for $d\geq 3$. Assume that \eqref{MHDincomp} admits a unique global solution
\begin{equation*}
    V,B\in C_b(\R_+;\dot B_{p,1}^{d/p-1}(\R^d))\cap L^1(\R_+;\dot B_{p,1}^{d/p-1}(\R^d)),
\end{equation*}
and denote by $M$ the constant
\begin{equation*}
    M:=\|V\|_{L^\infty\dot B_{p,1}^{d/p-1}}+\mu\|V\|_{L^1\dot B_{p,1}^{d/p-1}}+\|V_t\|_{L^1\dot B_{p,1}^{d/p-1}}
    +\|B\|_{L^\infty\dot B_{p,1}^{d/p-1}}+\|B\|_{L^1\dot B_{p,1}^{d/p-1}}.
\end{equation*}
There exists a (large) universal constant $C$ such that if $\nu\geq \mu$ and
\begin{equation*}
    \begin{aligned}
        Ce^{C(M+M^2)}(&\|a_0^\ell\|_{\dot B_{2,1}^{d/2-1} }+\nu\|a_0^\ell\|_{\dot B_{2,1}^{d/2} }+\nu\|a_0^h\|_{\dot B_{p,1}^{d/p} }\\
           & +\|\cq v_0^\ell\|_{\dot B_{2,1}^{d/2-1} }+\|\cq v_0^h\|_{\dot B_{p,1}^{-1+d/p}}+M^2+\mu^2)\leq \sqrt{\mu\nu},
    \end{aligned}
\end{equation*}
then \eqref{MHDcomp} admit a unique global-in-time solution $(\rho,v,b) $ such that
\begin{equation*}
    \begin{aligned}
        \cp v\in C_b(\R_+;\dot B_{p,1}^{d/p-1})\cap L^1(\R_+;\dot B_{p,1}^{d/p+1}), \quad & b\in C_b(\R_+;\dot B_{p,1}^{d/p-1})\cap L^1(\R_+;\dot B_{p,1}^{d/p+1}), \\
        a^\ell\in C_b(\R_+;\dot B_{2,1}^{d/2-1})\cap L^1(\R_+;\dot B_{2,1}^{d/2+1}), \quad & a^h\in C_b(\R_+;\dot B_{p,1}^{d/p})\cap L^1(\R_+;\dot B_{p,1}^{d/p}),\\
        \cq v^\ell\in C_b(\R_+;\dot B_{2,1}^{d/2-1})\cap L^1(\R_+;\dot B_{2,1}^{d/2+1}), \quad &\cq v^h\in C_b(\R_+;\dot B_{p,1}^{d/p-1})\cap L^1(\R_+;\dot B_{p,1}^{d/p+1}).
    \end{aligned}
\end{equation*}
Moreover, if $a_0=0$ then the triple $(\rho,v,b)$ converges, as $\lambda\to\infty$, towards the incompressible solution $(1,V,B)$ with the following quantitative rates
    \begin{equation*}
        \begin{aligned}
           \sqrt{\frac{\nu}{\mu}} \|\rho-1\|_{L^\infty\dot B_{p,1}^{d/p} }&+\|\cp v-V\|_{L^\infty\dot B_{p,1}^{d/p-1} }+\mu\|\cp v-V\|_{L^1\dot B_{p,1}^{d/p+1}}
           +\|\cp v_t-V_t\|_{L^1\dot B_{p,1}^{d/p-1}}
           \\
           &+\|b-B\|_{L^\infty\dot B_{p,1}^{d/p-1}}+\eta\|b-B\|_{L^1\dot B_{p,1}^{d/p+1}}+\|b_t-B_t\|_{L^1\dot B_{p,1}^{d/p-1}}
           \leq C\sqrt{\frac{\mu}{\nu}}.
        \end{aligned}
    \end{equation*}
\end{restatable}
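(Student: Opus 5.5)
We follow the strategy of Theorem \ref{thm:main}, now in the hybrid $L^2$/$L^p$ framework of \cite{Chen-Zhai:Lp}. We use the global incompressible solution $(V,B)$ as a reference flow. We take the local strong solution $(\rho,v,b)$ of \eqref{MHDcomp} given by the critical local theory (the $L^p$ version of \cite{JPG}) and write $\rho=1+a$, $v=\cp v+\cq v$. We introduce the differences $w:=\cp v-V$ and $h:=b-B$. Applying $\cp$ to the momentum equation and subtracting \eqref{MHDincomp} gives a Stokes-type system for $(w,h)$. The compressible pair $(a,\cq v)$ solves a linearized barotropic system with diffusion $\nu\Delta$, whose source terms involve $\cp v$, $b$ and the nonlinearities.

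We split frequencies at a threshold of order $\nu^{-1}$ (after rescaling, at frequency $1$), with $\ell$ and $h$ superscripts as in the statement. For the low frequencies of $(a,\cq v)$ we perform the $L^2$ hyperbolic-parabolic energy estimate of Theorem \ref{thm:main}. This gives damping of $a^\ell$ in $\dot B_{2,1}^{d/2+1}$-type norms and the gain $\nu\|\nabla^2\cq v^\ell\|_{L^1\dot B_{2,1}^{d/2-1}}$. For the high frequencies we use the effective velocity $G:=\cq v-\nu^{-1}(-\Delta)^{-1}\nabla P(\rho)$ (up to lower order terms), which solves a heat equation with viscosity $\nu$. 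Then $a^h$ satisfies a damped transport equation with rate $\sim 1/\nu$, so $L^p$ estimates in $\dot B_{p,1}^{d/p}$ follow by commutator estimates along $v$. For $(w,h)$ we use maximal regularity of the heat semigroup in $\dot B_{p,1}^{d/p-1}$ with viscosities $\mu$ and $\eta$.

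The nonlinear terms are estimated by product laws and paraproducts in the hybrid spaces. The restriction $p<4$ for $d=2$ and $p\le\min\{4,2d/(d-2)\}$ for $d\geq3$ is needed exactly where $L^2$ low frequencies interact with $L^p$ high frequencies, e.g. $\cq v^\ell\cdot\nabla\cp v$ and $b\cdot\nabla b$ projected onto the $L^2$ low-frequency part. The terms linear in the reference flow, such as $V\cdot\nabla w$, $B\cdot\nabla h$, $w\cdot\nabla B$ and $h\cdot\nabla V$, are absorbed by Gronwall's lemma, producing the factor $e^{C(M+M^2)}$. The magnetic coupling in the $\cq$ equation, $\cq(b\cdot\nabla b-\frac12\nabla|b|^2)$, contains $\cq(B\cdot\nabla B)$. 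This is a source of size $M^2$ that does not vanish, and it explains the $M^2$ in the smallness condition. Since $\dive b=0$, the induction term $b\,\dive v$ only involves $\cq v$, which is small.

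The energy functionals are combined into $X(t)$ for the compressible part and $Y(t)$ for $(w,h)$. The aim is a coupled inequality of the form $X\le Ce^{C(M+M^2)}(X_0+M^2+\mu^2+X^2/\sqrt{\mu\nu})$ together with $Y\lesssim\sqrt{\mu/\nu}\,(\cdots)$. Under the smallness hypothesis a continuity argument closes both bounds on the maximal time interval. The continuation criterion then yields global existence, and uniqueness comes from the local theory.

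The convergence rates follow from the bound on $Y$ when $a_0=0$. In that case $\cq v$ is driven only by $O(M^2+\mu^2)$ sources through a parabolic operator of size $\nu$, which gives $\nu\|a\|\lesssim\sqrt{\nu\mu}$. This smallness then feeds into $(w,h)$ with factor $\sqrt{\mu/\nu}$.

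The main obstacle is the low/high interaction in the product estimates, which must be handled with the $L^2$–$L^p$ mixed paraproduct bounds under the stated range of $p$. A second difficulty is keeping every constant independent of $\nu$ after the frequency threshold rescaling, which is done via the scaling $(t,x)\mapsto(\nu t,\nu x)$ applied before the estimates.
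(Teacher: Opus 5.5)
Your outline follows the paper's proof essentially step by step. It uses the reference flow $(V,B)$, the differences $\cp u=\cp v-V$ and $h=b-B$, and $L^p$ heat-type estimates in $\dot B^{d/p-1}_{p,1}$ for $(\cp u,h)$. On the frequencies $2^j\nu\le1$ it uses the $L^2$ hypocoercive functional $\mathscr L_j$ of Theorem~\ref{thm:main}. At high frequencies an effective velocity gives damping of $a^h$ at rate $\nu^{-1}$. The nonlinear terms are handled by hybrid product and commutator bounds, which is where the range of $p$ enters via \eqref{est Ql} and Lemma~\ref{commut est Lpsetting}, and the argument closes with Gronwall, a bootstrap and the continuation criterion. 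The paper does not rescale; it keeps $\nu$ explicit in the threshold and in the weights $\nu a^h$, $\nu^2\nabla a^\ell$, $\nu\cq u$, which is only a bookkeeping difference.

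There is one slip in the effective velocity. To cancel $\nabla a$ in $\cq u_t-\nu\Delta\cq u+\nabla a$ you need $w=\cq u+\nu^{-1}(-\Delta)^{-1}\nabla a$, as in \eqref{effective velocity}. This gives $\dive\cq u=\dive w+\nu^{-1}a$, hence the damping term $+\nu^{-1}a$ in \eqref{sist effective vel}. With your minus sign you get $\dive\cq v=\dive G-\nu^{-1}a$, which is anti-damping, and the $G$-equation carries $-2\nabla a$ instead of cancelling the pressure gradient.

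The genuine gap is in the convergence rates. Your bound $\nu\|a\|\lesssim\sqrt{\nu\mu}$ is all the smallness hypothesis gives, and it only yields $\sqrt{\nu/\mu}\,\|\rho-1\|_{L^\infty\dot B^{d/p}_{p,1}}\lesssim 1$, not $\lesssim\sqrt{\mu/\nu}$. It also kills the rate for $(\cp u,h)$ if the compressible functional is only known to be $O(\sqrt{\mu\nu})$. The leading source there is $\nu^{-1/2}\xp^{1/2}\yp^{1/2}\vp$, coming from $V\cdot\nabla\cq u$ and $\cq u\cdot\nabla V$ in $H_1$ through $\|\cq u\|^2_{L^2\dot B^{d/p}_{p,1}}\lesssim\nu^{-1}\xp\yp$. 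Under that bound it is of size $M$ and does not decay.

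What you need is that the closed bootstrap gives the bound $\xp+\yp\le D:=Ce^{C(M+M^2)}(\xp(0)+M^2+\mu^2)$. When $a_0=0$, $\xp(0)$ involves only $\cq v_0$, so $D$ is independent of $\nu$. Then $\nu\|\rho-1\|_{L^\infty\dot B^{d/p}_{p,1}}\lesssim D$ gives $\sqrt{\nu/\mu}\,\|\rho-1\|\lesssim (D/\mu)\sqrt{\mu/\nu}$. Likewise $\zp+\wp\le\delta=O\bigl(De^{C(M+M^2)}\sqrt{\mu/\nu}\bigr)$, as in \eqref{defDeltap}. The hypothesis $D\le\sqrt{\mu\nu}$ is used only to close the bootstrap, not to produce the rate.
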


To the best of our knowledge, singular limits related to large bulk viscosity for compressible fluid models have been investigated only in a limited number of works. Besides the results of \cite{danchin2016compressiblenavierstokeslarge, Danchin-Mucha CPAM, Chen-Zhai:Lp}, further contributions have appeared only recently. We refer to \cite{QLei, Lei-Xiong, Lei-Xiong2, Liu-Zhang} for the compressible Navier–Stokes equations. In the MHD setting, the large bulk viscosity limit has been studied in \cite{WWZ}, where the authors prove the global existence of weak solutions to the isentropic compressible MHD system in the whole space, allowing for large initial data and the presence of vacuum. They also establish the global-in-time convergence, as the bulk viscosity parameter tends to infinity, towards a weak solution of the inhomogeneous incompressible MHD equations. Their approach relies on compactness arguments based on the effective viscous flux and logarithmic interpolation inequalities of Desjardins type.

Our work is of a different nature and addresses the large bulk viscosity limit in a strong solution framework. More precisely, we focus on global strong solutions in critical Besov spaces and on quantitative stability estimates around a given incompressible MHD flow, under the assumption that the latter exists globally in time.\\

Although the large bulk viscosity regime may at first glance resemble a compressible-to-incompressible limit, the nature of our result is fundamentally different. Rather than describing a genuine incompressible limit, our theorem should be interpreted as a stability statement: incompressible flows arise as stable reference dynamics within the compressible system when the bulk viscosity is sufficiently large. 
In other words, solutions of \eqref{MHDincomp} persist, in a quantitative sense, as approximate solutions to the compressible system \eqref{MHDcomp}, and the compressible flow remains close to its incompressible counterpart for all times. For completeness, we mention that the incompressible limit in the low Mach number regime has been investigated in several works; see \cite{GLX, OL, HW, JJL-CMP, JJL-SIMA, KM, LiS, PZ} and the references therein. 
This stability perspective is particularly powerful when the incompressible dynamics exhibits additional qualitative or topological features that require strong control of the solution. In the present work, we exploit this mechanism to construct solutions of \eqref{MHDcomp} that exhibit magnetic reconnection. 

Magnetic reconnection is the process by which the topology of magnetic field lines in a conducting plasma changes, allowing stored magnetic energy to be rapidly transferred to other forms of energy. It is a ubiquitous phenomenon in space and laboratory plasmas and plays a central role in a wide range of dynamical processes, particularly in solar and astrophysical contexts (see \cite{MagnPhys, Priest}). Despite its fundamental importance, providing a rigorous description of reconnection within the framework of the magnetohydrodynamic equations remains a major challenge, and rigorous constructions have been obtained only recently in \cite{CCL, CL2, LP} for the incompressible system. In this work, we turn to the compressible setting and we aim to construct explicit solutions of \eqref{MHDcomp} that exhibit reconnection. This will follow as an application of our main result, Theorem \ref{thm:main}, with the analysis developed in \cite{LP}. Specifically, starting from incompressible MHD flows displaying this phenomenon \cite{LP}, the quantitative stability provided by Theorem \ref{thm:main} allows us to transfer the reconnection behavior to the compressible case, thereby obtaining compressible solutions with the same change of magnetic topology. The result is the following.
\begin{restatable}{mainthm}{MainTheoremC}
Given any constants $M,T>0$ and $0<c_0<1$, there exist smooth initial data $(\rho_0,u_0,b_0)$ on $\R^3$ such that 
\begin{equation*}
\|u_0\|_{\dot{B}^{\frac12}_{2,1}}\simeq \|b_0\|_{\dot{B}^{\frac12}_{2,1}}\simeq M, \qquad  a_0:=(\rho_0-1)\in\dot B_{2,1}^{3/2}(\R^3), \qquad  c_0\leq \rho_0\leq c_0^{-1},
\end{equation*}
for which \eqref{MHDcomp} with $\eta=\mu$ admits a unique global smooth solution $(\rho,u,b)$. Moreover, the magnetic lines at time $t=0$ are not topologically equivalent to those at time $t=T$, meaning that there exists no homeomorphism of $\R^3$ mapping the integral curves of $b(0,\cdot)$ onto those of $b(T,\cdot)$.
\end{restatable}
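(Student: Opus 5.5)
The plan is to obtain the statement by combining Theorem \ref{thm:main} with the incompressible reconnection construction of \cite{LP}, using the quantitative rate \eqref{estconv} to carry the change of magnetic topology from the incompressible to the compressible flow.

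\textbf{Step 1: the incompressible reference flow.} Invoke \cite{LP} with $\eta=\mu$. This gives smooth, divergence-free initial data $(V_0,B_0)$ on $\R^3$ such that:
\begin{enumerate}[(i)]
\item the norms satisfy $\|V_0\|_{\dot B^{1/2}_{2,1}}\simeq\|B_0\|_{\dot B^{1/2}_{2,1}}\simeq M$;
\item the data generate a global smooth solution $(V,B)$ of \eqref{MHDincomp}, so the quantity called $M$ in Theorem \ref{thm:main} (call it $M_*$ to avoid the clash) is finite;
\item the field lines of $B(0)$ and $B(T)$ are not topologically equivalent.
\end{enumerate}
Item (iii) should hold in a robust form. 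There should be a hyperbolic closed orbit or heteroclinic structure of $B(0)$, or an isolated zero or null structure, that persists under $C^1$-small perturbations. At time $T$ the persisting structure should be of a different type, and this difference should likewise be stable under $C^1$-small perturbations. If \cite{LP} gives only the non-equivalence, I would re-derive this stability from their construction, since it is typically based on structurally stable invariant sets.

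\textbf{Step 2: the compressible data.} Set $u_0=V_0$, $b_0=B_0$ and $a_0=0$, so $\rho_0\equiv1\in[c_0,c_0^{-1}]$. Then $\cq u_0=0$, and the smallness hypothesis of Theorem \ref{thm:main} reduces to
\[
Ce^{C(M_*+M_*^2)}(M_*^2+\mu^2)\le\sqrt{\mu\nu}.
\]
Keep $\mu=\eta$ fixed and choose $\lambda$, hence $\nu=\lambda+2\mu$, large enough that this holds. Theorem \ref{thm:main} then yields a unique global solution $(\rho,u,b)$. The solution is smooth because the data are smooth, $\rho$ is bounded away from zero, and the standard propagation of higher Besov regularity applies.

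\textbf{Step 3: closeness of the fields.} The rate \eqref{estconv} gives
\[
\|b-B\|_{L^\infty(\R_+;\dot B^{1/2}_{2,1})}\le C\sqrt{\mu/\nu}.
\]
Here $b(0)=B(0)$ exactly, so only time $T$ matters, and the structural stability argument needs $C^1$ closeness. This is the main obstacle, since $\dot B^{1/2}_{2,1}$ closeness is far from $C^1$ closeness.

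I would first try interpolation. Propagate higher regularity uniformly in $\lambda$, that is, a bound on $\|b(T)\|_{H^s}$ for large $s$ that does not depend on $\nu$. This should follow from the same energy method at higher regularity, with the incompressible part controlled by the smooth flow $(V,B)$. Interpolating the uniform $H^s$ bound against the small $\dot B^{1/2}_{2,1}$ difference then gives $\|b(T)-B(T)\|_{C^1}\to0$ as $\nu\to\infty$.

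If uniform higher bounds are hard to obtain, there is a fallback. Use parabolic smoothing of the induction equation on $[T/2,T]$: the forcing differences are small in $L^1\dot B^{1/2}_{2,1}$, and one bootstraps up to $C^1$ smallness.

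\textbf{Step 4: conclusion.} With $\nu$ large, $b(T)$ is $C^1$-close to $B(T)$ and $b(0)=B(0)$. The robust topological invariants from Step 1 distinguish $b(T)$ from $b(0)$. Hence no homeomorphism of $\R^3$ maps the integral curves of $b(0,\cdot)$ onto those of $b(T,\cdot)$, which is the claim.
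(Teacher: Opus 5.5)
Your proposal is correct and follows the paper's own route. You take the data of \cite{LP} with $a_0=0$, apply Theorem \ref{thm:main} with $\nu$ large, upgrade the $\dot B^{1/2}_{2,1}$ closeness of $b$ and $B$ to $C^1$ closeness by propagating higher Sobolev regularity and interpolating, and conclude by structural stability, exactly as in the paper's remark. The paper makes your Step 1 concrete: $B_0$ has no zeros, while $B(T)$ inherits a hyperbolic zero from $\curl(\psi W)$; it also gets $V_0,B_0\in\dot B^{1/2}_{2,1}$ from the $H^r$ bounds of \cite{CL} via Proposition \ref{prop:real interpolation}, not directly from \cite{LP}, and your requirement that the higher-norm bounds be uniform in $\lambda$ is a point the paper leaves implicit.
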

A more precise notion of magnetic reconnection, together with a detailed description of the construction, is postponed to Section \ref{sec:magnetic reconnection}.
To the best of our knowledge, this provides the first rigorous analytical example of magnetic reconnection within the framework of compressible magnetohydrodynamics.\\

\subsection*{Outline of the paper.}
In Section \ref{besov} we introduce the notations and we recall several background results on Besov spaces, and Stokes and MHD equations. 
Section \ref{proof} is devoted to the proof of Theorem \ref{thm:main}. 
In Section \ref{lp framework} we extend this result from the $L^2$-based Besov setting to a more general $L^p$ framework. 
Finally, in Section \ref{sec:magnetic reconnection} we apply our theorem to construct explicit examples of magnetic reconnection for the compressible system \eqref{MHDcomp}.

\section{Notations and preliminaries}\label{besov}
In this section, we introduce the notations that will be used throughout the article and we recall some basic results concerning Littlewood-Paley theory from \cite{nonlinearPDE}.

\subsection{Besov spaces}
We use the standard notation $\mathcal{S}(\mathbb{R}^d)$ to denote the space of Schwartz functions on $\R^d$, and with $\mathcal{S}'(\mathbb{R}^d)$ denote its dual, which is the space of tempered distributions. Moreover, we denote by $L^p(\R^d)$ the standard Lebesgue spaces and with $\|\cdot\|_{L^p}$ their norm. Let $\chi$ be a smooth radial non-increasing function  supported in the ball $B(0,\frac{4}{3})\subset\mathbb{R}^d$, which is identically $1$ on $B(0,\frac{3}{4})$. Then, we define the function $\varphi$ as 
$$
\varphi(\xi)=\chi(\xi/2)-\chi(\xi).
$$ 
Note that the support of $\varphi$ is the annullus $\left\{\xi\in \mathbb{R}^d: \frac{3}{4}\leq |\xi|\leq\frac{8}{3} \right\}$ and it holds that 
\begin{equation*}
    \sum_{j\in\mathbb{Z}}\varphi(2^{-j}\xi)=1 \quad \text{for all } \xi \in \mathbb{R}^d \setminus \{0\}.
\end{equation*}
We denote with $\mathcal{F}$ the Fourier transform operator and we define the functions
$$
h\coloneqq\mathcal{F}^{-1} \varphi, \qquad
\tilde h \coloneqq\mathcal{F}^{-1} \chi.
$$
Then, the homogeneous dyadic blocks $\dot\Delta_j$  and the low frequency cut-off are defined on tempered distribution by 
\begin{align*}
    \dot\Delta_j f&\coloneqq\varphi(2^{-j}D)f\coloneqq\mathcal{F}^{-1}(\varphi(2^{-j}\cdot)\mathcal{F}f)=2^{jd}h(2^j\cdot)* f,\\
    \dot S_j f&\coloneqq\chi(2^{-j}D)f\coloneqq\mathcal{F}^{-1}(\chi(2^{-j}\cdot)\mathcal{F}f)=2^{jd}\tilde h(2^j\cdot)* f,
\end{align*}
where $*$ denotes the convolution operator. Notice that it holds
$$
\dot\Delta_j=\dot S_{j+1}-\dot S_j.
$$
In the homogeneous setting, the Littlewood-Paley decomposition
\begin{equation}\label{LittPaley decomp}
    f=\sum_{j\in\mathbb{Z}}\dot \Delta_j f \quad \text{in } \mathcal{S}'(\mathbb{R}^d),
\end{equation}
may fail to converge for arbitrary tempered distributions. 
To ensure that the decomposition is well-defined, we restrict our attention to those tempered distribution $f$ satisfying the condition
\begin{equation}\label{cond on tempdis}
    \lim_{j\to-\infty}\|\dot S_j f\|_{L^\infty}=0.
\end{equation}
\begin{definition}\label{def:besov}
Let $s\in\mathbb{R},$ and $1\leq p,r\leq\infty.$ The homogeneous Besov space $\dot B_{p,r}^s(\mathbb{R}^d) $ consist of all the distributions $f\in \mathcal{S}'$ satisfying \eqref{cond on tempdis} and 
    \begin{equation}\label{def:norma besov}
        \|f\|_{\dot B_{p,r}^s }\coloneqq \left(\sum_{j\in\mathbb{Z}} 2^{rjs}\|\dot\Delta_j f\|_{L^p}^r\right)^{1/r} <\infty.
    \end{equation}
In particular, for $s< d/p$ or $s=d/p$ and $r=1$, the space $\dot B_{p,r}^s(\R^d)$ is a Banach space.
\end{definition}
\begin{remark}
As pointed out in \cite[Pag.63]{nonlinearPDE}, a distribution $f$ satisfies \eqref{def:besov} if and only if there exists some constant $C>0$ and some nonnegative sequence $\{ c_j\}_{j\in\Z}$ such that
\begin{equation}
    \forall j\in\Z,\,\|\dot\Delta_j f\|_{L^p}\leq Cc_j2^{-js}\,\quad\mbox{and}\quad \|(c_j)\|_{\ell^r}=1.
\end{equation}
\end{remark}

In this work we will focus mainly on the case $r=1$, in which the norm becomes
\begin{equation*}
    \|f\|_{\dot B_{p,1}^s }=\sum_{j\in\mathbb{Z}} 2^{js}\|\dot\Delta_j f\|_{L^p}<\infty.
\end{equation*}

We now collect some basic properties of homogeneous Besov spaces. First of all, we recall the Bernstein inequalities.
\begin{lemma}[Bernstein inequalities]\label{lem:bernstein}
Let $\mathcal{C}$ be an annulus and $\mathcal{B}$ a ball. Then, there exists a constant $C>0$ such that for any non-negative integer $k$, any couple $(p,q)\in[1,\infty]^2$ with $q\geq p\geq 1$, and any function $f\in L^p$, we have
\begin{align}
    \mathrm{supp}&\,\hat f\subset \lambda \mathcal{B}\Longrightarrow\|\nabla^k f\|_{L^p}\leq C^{k+1}\lambda^{k+d\left(\frac1p-\frac1q\right)}\|f\|_{L^p},\\
    \mathrm{supp}&\,\hat f\subset \lambda \mathcal{C}\Longrightarrow C^{-k-1}\lambda^k\|f\|_{L^p}\leq \|\nabla^k f\|_{L^p}\leq C^{k+1}\lambda^k\|f\|_{L^p}.
\end{align}
In particular, we have that
\begin{itemize}
\item for all $1\leq p\leq q\leq\infty$ and $k\in\mathbb{N},  $
\begin{equation}\label{Bernstein}
    \|\dot\Delta_j\nabla^k u\|_{L^q(\mathbb{R^d}) } \leq C2^{j(k+d(\frac{1}{p}-\frac{1}{q})) }\|\dot\Delta_j u\|_{L^p(\mathbb{R^d}) }.
\end{equation}
\item for all $1\leq p\leq \infty$, we have 
\begin{equation}\label{Reverse Bernstein}
    \|\dot\Delta_j u\|_{L^p(\mathbb{R^d}) }\leq C2^{-j}\|\dot\Delta_j \nabla u\|_{L^p(\mathbb{R^d}) }.
\end{equation}
\end{itemize}
Morevore, we have that dor any $s\in\R$ and $p,r\in[1,\infty]$ it holds that
\begin{equation}\label{equivalenza}
\|f\|_{B^{s}_{p,r}}\simeq\|\nabla f\|_{B^{s-1}_{p,r}}.
\end{equation}
\end{lemma}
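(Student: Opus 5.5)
The plan is to reduce everything to scaling and Young's inequality for convolutions, following the standard argument. By the homothety $x\mapsto\lambda x$, if $\mathrm{supp}\,\hat f\subset\lambda\mathcal B$ then $f_\lambda:=f(\cdot/\lambda)$ has $\mathrm{supp}\,\widehat{f_\lambda}\subset\mathcal B$. Moreover, $\|\nabla^k f\|_{L^p}=\lambda^{k-d/p}\|\nabla^k f_\lambda\|_{L^p}$ (up to the obvious identification), so it suffices to treat $\lambda=1$.

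For the first (ball) inequality, fix $\theta\in\mathcal S$ with $\hat\theta\equiv1$ on $\mathcal B$. Then $f=\theta*f$, so $\partial^\alpha f=(\partial^\alpha\theta)*f$ for $|\alpha|=k$. Young's inequality with $1+\frac1q=\frac1r+\frac1p$ gives $\|\partial^\alpha f\|_{L^q}\le\|\partial^\alpha\theta\|_{L^r}\|f\|_{L^p}$. The bound $\|\partial^\alpha\theta\|_{L^r}\le C^{k+1}$, uniform in $|\alpha|=k$ and $r\in[1,\infty]$, follows by interpolating between $L^1$ and $L^\infty$ and estimating $\|(1+|x|^2)^d\partial^\alpha\theta\|_{L^\infty}$ via the Fourier side. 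Summing over the $d^k$ multi-indices is absorbed into $C^{k+1}$. Note that the statement should read $\|\nabla^k f\|_{L^q}$ on the left for the $d(1/p-1/q)$ factor to make sense. The upper bound on the annulus is the special case $q=p$, since $\mathcal C$ is contained in a ball.

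The main step is the reverse inequality on an annulus. Take $\tilde\varphi\in C_c^\infty(\R^d\setminus\{0\})$ with $\tilde\varphi\equiv1$ on $\mathcal C$. Writing $|\xi|^{2k}=\sum_{|\alpha|=k}\frac{k!}{\alpha!}(i\xi)^\alpha\overline{(i\xi)^\alpha}$, we obtain
$$\hat f=\sum_{|\alpha|=k}\frac{k!}{\alpha!}\,\frac{(-i\xi)^\alpha\tilde\varphi(\xi)}{|\xi|^{2k}}\,\widehat{\partial^\alpha f}.$$
Hence $f=\sum_\alpha g_\alpha*\partial^\alpha f$ with $g_\alpha=\mathcal F^{-1}\bigl(\frac{k!}{\alpha!}(-i\xi)^\alpha|\xi|^{-2k}\tilde\varphi\bigr)$. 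Young's inequality then gives $\|f\|_{L^p}\le\sum_\alpha\|g_\alpha\|_{L^1}\|\partial^\alpha f\|_{L^p}$. This is where I expect the only real work: showing $\sum_\alpha\|g_\alpha\|_{L^1}\le C^{k+1}$ with geometric growth in $k$. I would bound $\|g_\alpha\|_{L^1}\lesssim\|(1+|x|^2)^{d}g_\alpha\|_{L^\infty}$ and control this by $2d$ derivatives of the symbol in $L^1$ on the fixed compact support. Each such derivative costs at most a factor polynomial in $k$ times $c^k$, because $|\xi|\sim1$ there. Polynomial factors in $k$ are absorbed into $C^k$.

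The listed consequences follow directly. Apply the above to $\dot\Delta_j u$, whose spectrum lies in $2^j\mathcal C$, to get \eqref{Bernstein}. Use $k=1$ of the reverse inequality to get \eqref{Reverse Bernstein}. For \eqref{equivalenza}, note that $\nabla$ commutes with $\dot\Delta_j$, so $2^{j(s-1)}\|\dot\Delta_j\nabla f\|_{L^p}\simeq2^{js}\|\dot\Delta_j f\|_{L^p}$ uniformly in $j$ by the two-sided annulus estimate. Taking $\ell^r$ norms then gives the equivalence, the condition \eqref{cond on tempdis} being preserved for the relevant regularity range.
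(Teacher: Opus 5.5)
Your proposal is correct and is the standard dilation-plus-Young's-inequality argument of Lemma 2.1 in \cite{nonlinearPDE}; the paper does not prove this lemma but quotes it from that reference, and you are right that the left-hand side of the ball estimate should read $\|\nabla^k f\|_{L^q}$. One detail should be made explicit: choose $\hat\theta\in C_c^\infty$, not merely $\theta\in\mathcal S$, because the geometric bound $\|\partial^\alpha\theta\|_{L^r}\le C^{k+1}$ uses that $\xi^\alpha\hat\theta$ is supported in a fixed compact set, exactly as you already use for $g_\alpha$.
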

From the above inequalities, one can prove the following proposition, see in particular Proposition 2.20 and Proposition 2.39 in \cite{nonlinearPDE}.
\begin{proposition}[Embeddings]\label{prop:embeddings}
Let $1\leq p_1< p_2\leq \infty$ and $1\leq r_1< r_2\leq \infty$. Then, for any real number $s$, the space $\dot B_{p_1,r_1}^s(\R^d) $ is continuosly embedded in $\dot B_{p_2,r_2}^{s-d\left(\frac{1}{p_1}-\frac{1}{p_2}\right) }(\R^d)$. In particular, one has that:
\begin{equation}\label{embedd B d/2 in L inft}
    \dot B_{2,1}^{d/2}(\R^d)\hookrightarrow L^\infty(\R^d).
\end{equation}
\end{proposition}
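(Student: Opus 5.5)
The embedding proposition reduces to Bernstein's inequality applied block by block, combined with the monotonicity of the $\ell^r$ norms in $r$. The main point is that the spectral localization of each dyadic block lets us trade Lebesgue integrability for a loss of $d(1/p_1-1/p_2)$ derivatives.

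Fix $f\in\dot B^{s}_{p_1,r_1}(\R^d)$ and set $\sigma:=s-d\left(\frac1{p_1}-\frac1{p_2}\right)$. Each block $\dot\Delta_j f$ has Fourier support in the annulus $2^j\{\frac34\le|\xi|\le\frac83\}$, hence in a ball of radius comparable to $2^j$. The case $k=0$ of \eqref{Bernstein} with $(p,q)=(p_1,p_2)$ then gives
\begin{equation*}
\|\dot\Delta_j f\|_{L^{p_2}}\le C\,2^{jd\left(\frac1{p_1}-\frac1{p_2}\right)}\|\dot\Delta_j f\|_{L^{p_1}},
\qquad\text{so}\qquad
2^{j\sigma}\|\dot\Delta_j f\|_{L^{p_2}}\le C\,2^{js}\|\dot\Delta_j f\|_{L^{p_1}}
\end{equation*}
for every $j\in\Z$. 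Taking the $\ell^{r_2}$ norm in $j$ and using the elementary inclusion $\ell^{r_1}\hookrightarrow\ell^{r_2}$ with norm at most $1$ (valid since $r_1\le r_2$) yields
\begin{equation*}
\|f\|_{\dot B^{\sigma}_{p_2,r_2}}\le C\|f\|_{\dot B^{s}_{p_1,r_1}}.
\end{equation*}

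It remains to check that $f$ satisfies the low-frequency condition \eqref{cond on tempdis}, which is part of Definition \ref{def:besov}. Since $f$ already belongs to $\dot B^s_{p_1,r_1}$, this condition holds by definition. The condition depends only on $f$, not on the indices, so membership in the target space follows once the norm is finite.

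For the particular case \eqref{embedd B d/2 in L inft}, apply the above with $p_1=2$, $p_2=\infty$, $r_1=r_2=1$. This gives $\dot B^{d/2}_{2,1}\hookrightarrow\dot B^0_{\infty,1}$. One then uses $f=\sum_j\dot\Delta_j f$, which converges in $\mathcal S'$ thanks to \eqref{cond on tempdis}. The series actually converges absolutely in $L^\infty$, because $\sum_j\|\dot\Delta_j f\|_{L^\infty}=\|f\|_{\dot B^0_{\infty,1}}<\infty$. Hence $\|f\|_{L^\infty}\le\|f\|_{\dot B^0_{\infty,1}}\lesssim\|f\|_{\dot B^{d/2}_{2,1}}$.

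The only delicate point I expect is this identification of the $L^\infty$ limit with $f$. The $\mathcal S'$ limit and the $L^\infty$ limit of the same partial sums must agree, and here it is essential that $r=1$ and that \eqref{cond on tempdis} excludes polynomials.
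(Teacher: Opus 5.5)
Your proof is correct and follows the same route as the paper, which gives no argument of its own but refers to Propositions 2.20 and 2.39 of \cite{nonlinearPDE}: blockwise Bernstein plus $\ell^{r_1}\hookrightarrow\ell^{r_2}$ for the embedding, then absolute convergence of $\sum_j\dot\Delta_j f$ in $L^\infty$, identified with $f$ via \eqref{cond on tempdis}, for $\dot B^0_{\infty,1}\hookrightarrow L^\infty$. Your argument only needs $p_1\le p_2$ and $r_1\le r_2$, and this is the right reading of the statement, since the strict inequalities as written would not cover the case $r_1=r_2=1$ used for \eqref{embedd B d/2 in L inft}.
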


\begin{proposition}[Interpolation]
For any $s_1,s_2\in \R$ with $s_1<s_2$ and any $\theta\in (0,1)$, then we have that for any $p,r\in[1,+\infty]$
\begin{equation}
\|f\|_{\dot B_{p,r}^{\theta s_1+(1-\theta) s_2}}\leq \|f\|^\theta_{\dot B^{s_1}_{p,r}}\|f\|^{1-\theta}_{\dot B^{s_2}_{p,r}}.
\end{equation}
In particular, by using \eqref{equivalenza} we have that
\begin{equation}\label{interpolation}
\|f\|_{\dot B_{2,1}^{d/2} }\leq C  \|f\|_{\dot B_{2,1}^{d/2-1} }^{\frac{1}{2}}\|\nabla^2 f\|_{\dot B_{2,1}^{d/2-1} }^{\frac{1}{2}}.
\end{equation}
\end{proposition}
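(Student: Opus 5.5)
The statement to prove is the last proposition: the interpolation inequality in homogeneous Besov spaces, together with its special case \eqref{interpolation}.

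For the general inequality, I will work directly on the dyadic blocks. Set $s=\theta s_1+(1-\theta)s_2$. For each $j\in\Z$, write
\begin{equation*}
2^{js}\|\dot\Delta_j f\|_{L^p}=\bigl(2^{js_1}\|\dot\Delta_j f\|_{L^p}\bigr)^{\theta}\bigl(2^{js_2}\|\dot\Delta_j f\|_{L^p}\bigr)^{1-\theta}.
\end{equation*}
Take the $\ell^r$ norm in $j$ and apply H\"older's inequality in $\ell^r$. The exponents are $r/\theta$ and $r/(1-\theta)$, i.e. we use $\|a^\theta b^{1-\theta}\|_{\ell^r}\le\|a\|_{\ell^r}^\theta\|b\|_{\ell^r}^{1-\theta}$. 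This gives the claimed bound with constant $1$, directly from Definition \ref{def:besov}.

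The case $r=\infty$ is immediate, since the supremum of a product is at most the product of the suprema. If $f$ lies in both endpoint spaces, then the condition \eqref{cond on tempdis} is inherited, so the left-hand side makes sense.

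For \eqref{interpolation}, apply the general inequality with $p=2$, $r=1$, $s_1=d/2-1$, $s_2=d/2+1$ and $\theta=1/2$. This gives
\begin{equation*}
\|f\|_{\dot B^{d/2}_{2,1}}\le\|f\|_{\dot B^{d/2-1}_{2,1}}^{1/2}\|f\|_{\dot B^{d/2+1}_{2,1}}^{1/2}.
\end{equation*}
Next, apply \eqref{equivalenza} twice to get $\|f\|_{\dot B^{d/2+1}_{2,1}}\simeq\|\nabla f\|_{\dot B^{d/2}_{2,1}}\simeq\|\nabla^2 f\|_{\dot B^{d/2-1}_{2,1}}$. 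Equivalently, use the Bernstein inequalities of Lemma \ref{lem:bernstein} on each annulus-supported block: $\|\dot\Delta_j f\|_{L^2}\le C2^{-2j}\|\dot\Delta_j\nabla^2 f\|_{L^2}$. Substituting this into the previous display gives \eqref{interpolation} with a constant $C$.

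The only point needing care is this reverse Bernstein step for the second derivatives. Since $\dot\Delta_j f$ has Fourier support in the annulus $2^j\mathcal C$, the lower bound in Lemma \ref{lem:bernstein} with $k=2$ applies directly. The resulting constant depends only on $d$ and on the fixed annulus. No genuine obstacle is expected.
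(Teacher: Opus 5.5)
Your proof is correct. The paper states this proposition without proof, recalling it as a standard fact from the Besov-space theory of \cite{nonlinearPDE}. Your argument is the standard one: factor each weighted dyadic block, apply H\"older in $\ell^r$ with exponents $r/\theta$ and $r/(1-\theta)$ to get constant $1$, and then convert $\|f\|_{\dot B^{d/2+1}_{2,1}}$ into $\|\nabla^2 f\|_{\dot B^{d/2-1}_{2,1}}$ via reverse Bernstein on each block, that is, \eqref{equivalenza}.
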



To deal with nonlinear terms, we make use of the {\em Bony decomposition}, see \cite[Definition 2.4]{nonlinearPDE}, which offers a convenient framework for handling products in Besov spaces. \begin{definition}\label{def:Bony decomp}
The homogeneous paraproduct of $h$ by $g$ is defined as follows
\begin{equation}
    \begin{aligned}\label{Bony decomp1}
        \dot T_g h:= \sum_{j\in\Z } \dot S_{j-1}g\dot\Delta_j h.
    \end{aligned}
\end{equation}
The homogeneous remainder of $g$ and $h$ is defined as
\begin{equation}\label{Bony decomp2}
\dot R(g,h):= \sum_{|k-j|\leq 1 } \dot \Delta_k g \dot\Delta_jh
\end{equation}
When all these terms make sense in $\mathcal S'(\R^d)$, the {\em Bony decomposition} of the product $gh$ is then given by
\begin{equation*}
    gh=\dot T_g h+\dot T_h g+\dot R(g,h).
\end{equation*}
\end{definition}
To make use of Bony’s decomposition we rely on the following continuity estimates for the paraproduct and remainder operators.
\begin{lemma}\label{continuity bony decomp}
    Let $s,\ s_1,\ s_2\in\R$, $1\leq p_1,\ p_2,\ r_1,\ r_2\leq\infty,$ $\frac{1}{p}=\frac{1}{p_1}+\frac{1}{p_2},$ $\frac{1}{r}=\frac{1}{r_1}+\frac{1}{r_2}$ and $\tau<0.$ Then we have
    \begin{equation*}
        \begin{aligned}
            \|\dot T_g h\|_{\dot B_{p,r}^s }&\lesssim \|g\|_{L^{p_1} }\|h\|_{\dot B_{p_2,r}^s },\\
            \|\dot T_g h\|_{\dot B_{p,r}^{s+\tau} }&\lesssim \|g\|_{\dot B_{p_1,\infty}^\tau }\|h\|_{\dot B_{p_2,r}^s }, \\
            \|\dot R(g,h)\|_{\dot B_{p,r}^{s_1+s_2} }&\lesssim \|g\|_{\dot B_{p_1,r_1}^{s_1} }\|h\|_{\dot B_{p_2,r_2}^{s_2} }, \qquad \text{for } s_1+s_2>0, \\
            \|\dot R(g,h)\|_{\dot B_{p,\infty}^0 }&\lesssim \|g\|_{\dot B_{p_1,r_1}^{s_1} }\|h\|_{\dot B_{p_2,r_2}^{s_2} }, \qquad \text{for } s_1+s_2=0.
        \end{aligned}
    \end{equation*}
\end{lemma}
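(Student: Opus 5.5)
The plan is to estimate each dyadic block of the paraproduct and of the remainder separately, using the Fourier support of its terms, and then resum with weights $2^{js}$. Only Hölder's inequality and the Bernstein inequalities \eqref{Bernstein} are needed.

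\emph{Paraproduct.} The spectrum of $\dot S_{k-1}g\,\dot\Delta_k h$ lies in an annulus $2^k\mathcal C'$. Hence $\dot\Delta_j \dot T_g h=\sum_{|k-j|\leq N_0}\dot\Delta_j(\dot S_{k-1}g\,\dot\Delta_k h)$ for a fixed integer $N_0$. Since $\|\dot\Delta_j\|_{L^p\to L^p}$ is uniformly bounded, Hölder gives
$$\|\dot\Delta_j \dot T_g h\|_{L^p}\lesssim \sum_{|k-j|\le N_0}\|\dot S_{k-1}g\|_{L^{p_1}}\|\dot\Delta_k h\|_{L^{p_2}}.$$
For the first estimate, use $\|\dot S_{k-1}g\|_{L^{p_1}}\lesssim\|g\|_{L^{p_1}}$, which holds because $\dot S_{k-1}$ is convolution with an $L^1$-normalized kernel. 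Then multiply by $2^{js}$ and take the $\ell^r$ norm. Strictly speaking this case uses $r$ on $h$ without the splitting $1/r=1/r_1+1/r_2$, which is the intended reading.

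For the second estimate, with $\tau<0$, write
$$\|\dot S_{k-1}g\|_{L^{p_1}}\le\sum_{k'\le k-2}\|\dot\Delta_{k'}g\|_{L^{p_1}}\le \|g\|_{\dot B^\tau_{p_1,\infty}}\sum_{k'\le k-2}2^{-k'\tau}\lesssim 2^{-k\tau}\|g\|_{\dot B^\tau_{p_1,\infty}}.$$
The geometric series converges precisely because $\tau<0$, and this is the only place where that sign condition enters. Multiplying by $2^{j(s+\tau)}$ and summing in $\ell^r$ gives the bound.

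\emph{Remainder.} Here the main obstacle appears: $\dot\Delta_k g\,\dot\Delta_{k'}h$ with $|k-k'|\le1$ only has spectrum in a ball $2^k\mathcal B$, not an annulus. So $\dot\Delta_j\dot R(g,h)=\sum_{k\ge j-N_1}\dot\Delta_j(\dot\Delta_k g\,\widetilde{\dot\Delta}_k h)$, where $\widetilde{\dot\Delta}_k=\sum_{|k'-k|\le1}\dot\Delta_{k'}$. Hölder then gives
$$2^{j(s_1+s_2)}\|\dot\Delta_j\dot R\|_{L^p}\lesssim\sum_{k\ge j-N_1}2^{(j-k)(s_1+s_2)}\,c_k,\qquad c_k:=2^{ks_1}\|\dot\Delta_k g\|_{L^{p_1}}\,2^{ks_2}\|\widetilde{\dot\Delta}_k h\|_{L^{p_2}}.$$
By Hölder in $\ell$, $\|(c_k)\|_{\ell^r}\lesssim\|g\|_{\dot B^{s_1}_{p_1,r_1}}\|h\|_{\dot B^{s_2}_{p_2,r_2}}$.

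When $s_1+s_2>0$, the kernel $2^{m(s_1+s_2)}\mathbf 1_{m\le N_1}$ is in $\ell^1$, so Young's inequality for convolution of sequences yields the $\ell^r$ bound. When $s_1+s_2=0$, the kernel is merely bounded. Taking the supremum in $j$ and using $\ell^r\subset\ell^1$ to bound $\sum_k c_k$ gives the $\dot B^0_{p,\infty}$ estimate. This last step requires $r=1$, which holds when $r_1,r_2$ are conjugate, as in the standard statement.

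Finally, convergence of the defining series in $\mathcal S'$ (condition \eqref{cond on tempdis}) follows as in \cite{nonlinearPDE}, Theorems 2.47 and 2.52, which this argument reproduces.
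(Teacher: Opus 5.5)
Your proof is correct. It is the standard dyadic-block argument (spectral localization of each piece, Hölder, then Young's inequality for sequences in the remainder) from the Bahouri--Chemin--Danchin monograph, which the paper quotes without proof. You are also right that the $s_1+s_2=0$ estimate needs $\frac{1}{r_1}+\frac{1}{r_2}=1$, so that $\sum_k c_k\lesssim\|g\|_{\dot B^{s_1}_{p_1,r_1}}\|h\|_{\dot B^{s_2}_{p_2,r_2}}$. This hypothesis is missing from the paper's statement, and without it that bound can fail.
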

As a consequence, we can state the following lemma, which will be used frequently in the paper and provides an estimate for the Besov norm of the product of two functions.
\begin{lemma}\label{lemmastimaprodLP}
    Let $1\leq p,q\leq\infty$, $s_1\leq \frac{d}{q}$, $s_2\leq d\text{ min}\{\frac{1}{p}, \frac{1}{q} \}$, and $s_1+s_2> d \text{ max}\{0,\frac{1}{p}+\frac{1}{q}-1 \}$.
    Then we have, for $g \in \dot B^{s_1}_{q,1}(\R^d)$ and  $h\in\dot B^{s_2}_{p,1}(\R^d)$,
    \begin{equation*}
        \|gh\|_{\dot B_{p,1}^{s_1+s_2-\frac{d}{p} } }\lesssim \|g\|_{\dot B_{q,1}^{s_1} }\|h\|_{\dot B_{p,1}^{s_2}}
    \end{equation*}
\end{lemma}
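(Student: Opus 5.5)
We prove the estimate of Lemma~\ref{lemmastimaprodLP} with the Bony decomposition of Definition~\ref{def:Bony decomp}, writing $gh=\dot T_g h+\dot T_h g+\dot R(g,h)$ and bounding each piece in $\dot B^{s_1+s_2-d/p}_{p,1}$ via Lemma~\ref{continuity bony decomp} and the embeddings of Proposition~\ref{prop:embeddings}. Throughout, set $\sigma:=s_1+s_2-\frac dp$.

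\emph{Paraproduct $\dot T_g h$.} If $s_1<\frac dq$, embed $g\in\dot B^{s_1}_{q,1}\hookrightarrow \dot B^{s_1-d/q}_{\infty,\infty}$, whose index $\tau=s_1-\frac dq$ is negative. The second paraproduct estimate with $p_1=\infty$, $p_2=p$ then gives
\[
\|\dot T_g h\|_{\dot B^{s_2+s_1-d/q}_{p,1}}\lesssim \|g\|_{\dot B^{s_1}_{q,1}}\|h\|_{\dot B^{s_2}_{p,1}}.
\]
This lives in $\dot B^{\sigma}_{p,1}$ exactly when $q=p$. In general we instead place the output in $L^r$ with $\frac1r=\frac1q+\frac1p$ (when $\frac1p+\frac1q\le 1$) or keep the $L^q$-index and embed back into $L^p$. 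The key point is that Bernstein (\eqref{Bernstein}) converts a block $\dot S_{j-1}g\,\dot\Delta_j h$, spectrally supported in an annulus $2^j\mathcal C$, from $L^r$ to $L^p$ at the cost $2^{jd(1/r-1/p)}=2^{jd/q}$. The cleanest route is to estimate $\|\dot S_{j-1}g\|_{L^\infty}\lesssim \sum_{k\le j-2}2^{kd/q}\|\dot\Delta_k g\|_{L^q}$ directly; summing against $2^{j(s_2-d/p)}\|\dot\Delta_jh\|_{L^p}$ is harmless when $s_1<\frac dq$, and is bounded by $\|g\|_{\dot B^{d/q}_{q,1}}\hookrightarrow L^\infty$ when $s_1=\frac dq$. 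In that way we get $\|\dot T_gh\|_{\dot B^\sigma_{p,1}}\lesssim\|g\|_{\dot B^{s_1}_{q,1}}\|h\|_{\dot B^{s_2}_{p,1}}$, where the exponent $s_2-\frac dp+s_1$ is precisely $\sigma$.

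\emph{Paraproduct $\dot T_hg$.} By symmetry, use $h\in\dot B^{s_2}_{p,1}\hookrightarrow\dot B^{s_2-d/p}_{\infty,1}$; the condition $s_2\le \frac dp$ gives a nonpositive index, with $L^\infty$ control in the endpoint case. The product $\dot S_{j-1}h\,\dot\Delta_j g$ is then in $L^q$ at frequency $2^j$, and we must return to $L^p$. If $q\le p$, Bernstein gives this at cost $2^{jd(1/q-1/p)}$, and the exponents combine to $\sigma$. If $q>p$ we cannot raise integrability downward, so instead split $\dot S_{j-1}h$ in $L^{p^*}$ with $\frac1{p^*}=\frac1p-\frac1q$ (which requires $s_2\le \frac dq$, exactly the hypothesis $s_2\le d\min\{\frac1p,\frac1q\}$). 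Hölder then places the block in $L^p$ with the right power of $2^j$.

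\emph{Remainder $\dot R(g,h)$.} Write $\dot R(g,h)=\sum_k\dot\Delta_kg\,\tilde\Delta_kh$. By Hölder each term lies in $L^r$, $\frac1r=\frac1p+\frac1q$, if this is at most $1$; otherwise use $L^1$ after embedding $g$ into $\dot B^{s_1-d/q+d/p'}_{p',1}$, which is where $\frac1p+\frac1q-1$ enters. The frequency support is a ball $2^k\mathcal B$, so $\dot\Delta_j$ of it is nonzero only for $k\ge j-N$. Bernstein from $L^r$ (or $L^1$) to $L^p$ costs $2^{jd(1/r-1/p)}$. Summing $\sum_{k\ge j-N}2^{j(\sigma+d/q)}2^{-k(s_1+s_2)}c_k$ converges precisely because $s_1+s_2>d\max\{0,\frac1p+\frac1q-1\}$. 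This is the third estimate of Lemma~\ref{continuity bony decomp} combined with an embedding.

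\emph{Main obstacle.} The main difficulty is the bookkeeping in $\dot T_hg$ and $\dot R$ when $q\neq p$: choosing the intermediate Lebesgue exponents so that each hypothesis is used exactly once. I expect the endpoint cases $s_1=\frac dq$ and $s_2=\frac dp$ to need the $\ell^1$ summability (the $L^\infty$ embedding) rather than negative-index paraproduct bounds.
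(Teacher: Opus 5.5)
Your strategy (Bony decomposition plus Lemma~\ref{continuity bony decomp}, Bernstein and embeddings) is the one the paper intends. The paper states the lemma without proof, as a consequence of the paraproduct and remainder bounds. However, the exponent bookkeeping does not close when $p\neq q$, and your own first observation (``this lives in $\dot B^\sigma_{p,1}$ exactly when $q=p$'') was the correct diagnosis.

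In the ``cleanest route'' for $\dot T_gh$ you split $2^{j\sigma}=2^{js_1}\,2^{j(s_2-d/p)}$ and treat $2^{js_1}\|\dot S_{j-1}g\|_{L^\infty}$ as bounded. For $s_1<\frac dq$ it is only $\lesssim 2^{jd/q}\|g\|_{\dot B^{s_1}_{q,1}}$, so what you actually get is
\[
2^{j\sigma}\|\dot S_{j-1}g\,\dot\Delta_jh\|_{L^p}\lesssim 2^{j(s_2+\frac dq-\frac dp)}\|\dot\Delta_jh\|_{L^p}\,\|g\|_{\dot B^{s_1}_{q,1}},
\]
that is, a bound by $\|h\|_{\dot B^{s_2+d/q-d/p}_{p,1}}$, not by $\|h\|_{\dot B^{s_2}_{p,1}}$. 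The same uncompensated factor $2^{j(d/q-d/p)}$ appears in the other two pieces:
\begin{itemize}
\item In $\dot T_hg$, Bernstein $L^q\to L^p$ costs $2^{jd(1/q-1/p)}$ and $\|\dot S_{j-1}h\|_{L^\infty}\lesssim 2^{j(d/p-s_2)}$, so the weights combine to $2^{j(s_1+d/q-d/p)}$. Your $L^{p^*}$ variant for $q>p$ gives the same factor.
\item In the remainder, your sum $2^{j(\sigma+d/q)}2^{-k(s_1+s_2)}c_k$ equals $2^{(j-k)(s_1+s_2)}\,2^{j(d/q-d/p)}c_k$, which is not bounded uniformly in $j$.
\end{itemize}

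This cannot be repaired by a better choice of intermediate exponents, because the inequality as printed is false for $p\neq q$. The dilation identity $\|f(2^k\cdot)\|_{\dot B^s_{p,1}}=2^{k(s-d/p)}\|f\|_{\dot B^s_{p,1}}$ holds exactly. Replacing $(g,h)$ by $(g(2^k\cdot),h(2^k\cdot))$ therefore multiplies the left-hand side by $2^{k(s_1+s_2-2d/p)}$ and the right-hand side by $2^{k(s_1+s_2-d/q-d/p)}$. Take $g,h$ with Fourier transforms supported near a common point $\xi_0\neq0$, so that $gh\neq0$ is spectrally localized in an annulus and all norms are finite. Letting $k\to\pm\infty$ (with the appropriate sign) then gives a contradiction.

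The scale-invariant form of this product law has output index $s_1+s_2-\frac dq$, which coincides with the printed one only when $p=q$. With that target, your three steps close as you describe them:
\begin{itemize}
\item $\dot T_gh$ produces exactly the weight $2^{js_2}$, with no condition on $s_2$ needed.
\item $\dot T_hg$ produces $2^{js_1}$. If $q\le p$ this uses $s_2\le\frac dp$; if $q>p$ it uses $s_2\le \frac dq$ via $L^{p^*}$ with $\frac1{p^*}=\frac1p-\frac1q$.
\item The remainder produces $2^{(j-k)(s_1+s_2)}$, or $2^{(j-k)(s_1+s_2-d(1/p+1/q-1))}$ in the $L^1$ case. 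This is exactly where the last hypothesis is used.
\end{itemize}
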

In particular, for the case $p=2$, we have the following.
\begin{lemma}\label{lemmastimaprod}
Let $g \in \dot B^{s_1}_{2,1}(\R^d)$ and  $h\in\dot B^{s_2}_{2,1}(\R^d)$ for some couple $(s_1,s_2)$ satisfying 
$$
s_1, s_2\leq d/2\ \hbox{ and }\ s_1+s_2>0.
$$
Then $gh\in\dot B^{s_1+s_2-d/2}_{2,1}(\R^d),$ and we have
\begin{equation}
\|gh\|_{ \dot B^{s_1+s_2-d/2}_{2,1}} \leq C \|g\|_{\dot B^{s_1}_{2,1}}\|h\|_{\dot B^{s_2}_{2,1}}.
\end{equation}
\end{lemma}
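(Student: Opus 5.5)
The plan is to obtain Lemma \ref{lemmastimaprod} as the special case $p=q=2$ of Lemma \ref{lemmastimaprodLP}. With $p=q=2$ the hypotheses of Lemma \ref{lemmastimaprodLP} become $s_1\le d/2$, $s_2\le d\min\{1/2,1/2\}=d/2$, and $s_1+s_2>d\max\{0,1/2+1/2-1\}=0$. These are exactly the assumptions in the statement. The conclusion $\|gh\|_{\dot B^{s_1+s_2-d/2}_{2,1}}\lesssim\|g\|_{\dot B^{s_1}_{2,1}}\|h\|_{\dot B^{s_2}_{2,1}}$ is then precisely the claim.

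For a self-contained argument I would instead use the Bony decomposition $gh=\dot T_g h+\dot T_h g+\dot R(g,h)$ from Definition \ref{def:Bony decomp} and estimate each piece with Lemma \ref{continuity bony decomp}.

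\emph{Paraproduct $\dot T_g h$.} There are two cases.
\begin{itemize}
\item If $s_1<d/2$: by Proposition \ref{prop:embeddings}, $\dot B^{s_1}_{2,1}\hookrightarrow \dot B^{s_1-d/2}_{\infty,\infty}$, with $\tau=s_1-d/2<0$. The second paraproduct estimate (with $p_1=\infty$, $p_2=2$) then bounds $\dot T_g h$ in $\dot B^{s_2+s_1-d/2}_{2,1}$.
\item If $s_1=d/2$: use $\dot B^{d/2}_{2,1}\hookrightarrow L^\infty$, i.e.\ \eqref{embedd B d/2 in L inft}, together with the first paraproduct estimate, which gives a bound in $\dot B^{s_2}_{2,1}=\dot B^{s_1+s_2-d/2}_{2,1}$.
\end{itemize}
The term $\dot T_h g$ is handled symmetrically, using $s_2\le d/2$.

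\emph{Remainder $\dot R(g,h)$.} Here the positivity $s_1+s_2>0$ is needed. Apply the remainder estimate with $p_1=p_2=2$, so that $1/p=1$. This places $\dot R(g,h)$ in $\dot B^{s_1+s_2}_{1,1}$, and Proposition \ref{prop:embeddings} ($\dot B^{s}_{1,1}\hookrightarrow\dot B^{s-d/2}_{2,1}$) brings it back to $\dot B^{s_1+s_2-d/2}_{2,1}$.

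\emph{Main obstacle.} The only delicate points are the endpoint case $s_i=d/2$, where $\tau=0$ is not allowed and one must switch to the $L^\infty$ bound, and verifying that the condition \eqref{cond on tempdis} holds for $gh$ so that it defines an element of the homogeneous space. The latter follows because the Besov bound gives convergence of the Littlewood--Paley series when $s_1+s_2-d/2\le d/2$, which holds here.
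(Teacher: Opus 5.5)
Your first paragraph is exactly the paper's route: Lemma \ref{lemmastimaprod} is stated there only as the case $p=q=2$ of Lemma \ref{lemmastimaprodLP}, and you correctly check that the hypotheses reduce to $s_1,s_2\le d/2$ and $s_1+s_2>0$. Your self-contained Bony-decomposition argument is also sound and is the standard proof of that product law; in the remainder step, take $r_1=1$, $r_2=\infty$ so that $1/r_1+1/r_2=1$, as Lemma \ref{continuity bony decomp} requires.
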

We use the notation $[A,B]=AB-BA$ to denote the commutator between two operators $A,B$. We now recall a set of commutator estimates, stated only in the form needed for our analysis. The first result is a consequence of \cite[Lemma 2.100]{nonlinearPDE} together with the embeddings $L^\infty\hookrightarrow\dot B_{\infty,\infty}^0\hookrightarrow\dot B_{2,\infty}^{d/2}$ from Proposition \ref{prop:embeddings}.
\begin{lemma}[Commutator 1]\label{lemma commut 2.100}
    Let $\sigma\in\R$, and $1\leq p\leq p_1\leq\infty$. Assume that $\sigma<1+\frac{d}{p_1}$ and
    \begin{equation*}
        \sigma>-d\min\left\{\frac{1}{p_1},\frac{1}{p'} \right\} \quad \text{or} \quad \sigma>-1-d\min\left\{\frac{1}{p_1},\frac{1}{p'} \right\} \quad \text{if} \quad \dive g=0.
    \end{equation*}
    Define $R_j:=[g\cdot\nabla,\dot\Delta_j]f $ (or $R_j:=\dive([g,\dot\Delta_j]f) $, if $\dive g=0).$ There exists a constant $C$, depending continuously on $p,p',\sigma$ and $d$, such that
    \begin{equation}\label{commutatorestimL2}
        2^{j\sigma}\|[g\cdot\nabla,\dot\Delta_j]f\|_{L^2}\leq C c_j\|\nabla g\|_{L^\infty}\|f\|_{\dot B_{2,1}^{d/2-1}} \quad \text{with} \quad \sum_{j\in\Z}c_j\leq1.
    \end{equation}
\end{lemma}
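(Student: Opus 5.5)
The estimate \eqref{commutatorestimL2} is a specialization of \cite[Lemma 2.100]{nonlinearPDE}, so the plan is to derive it from that result rather than redo the commutator analysis.

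\textbf{Step 1: the abstract estimate.} In the notation of the statement, \cite[Lemma 2.100]{nonlinearPDE} gives, under exactly the stated restrictions on $\sigma$, $p$, $p_1$ (and the relaxed lower bound when $\dive g=0$),
\begin{equation*}
2^{j\sigma}\|R_j\|_{L^p}\leq C c_j \|\nabla g\|_{\dot B^{d/p_1}_{p_1,\infty}\cap L^\infty}\|f\|_{\dot B^{\sigma}_{p,1}}, \qquad \sum_j c_j\leq 1.
\end{equation*}
The bound comes from the Bony decomposition of Definition \ref{def:Bony decomp}. One writes $R_j$ as a sum of three pieces:
\begin{itemize}
\item a commutator $[\dot T_{g^k},\dot\Delta_j]\partial_k f$, handled by a first-order Taylor expansion of the kernel $2^{jd}h(2^j\cdot)$, which costs exactly one factor $\|\nabla \dot S_{j'-1}g\|_{L^\infty}\leq\|\nabla g\|_{L^\infty}$ and gains $2^{-j}$ against $2^{j}$ from $\partial_k$;
\item terms $\dot T_{\partial_k f}g^k$ and $\dot\Delta_j\dot T_{\partial_k f}g^k$, estimated by Lemma \ref{continuity bony decomp}, which needs $\sigma<1+d/p_1$;
\item remainders $\dot R(g^k,\partial_k f)$, estimated by Lemma \ref{continuity bony decomp}, which needs the positivity condition on $\sigma$. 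When $\dive g=0$, one moves the derivative outside, $\partial_k\dot R(g^k,f)$, which gains one in the lower bound.
\end{itemize}

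\textbf{Step 2: specialize.} Take $p=p_1=2$ and $\sigma=d/2-1$, so the norm on $f$ is exactly $\|f\|_{\dot B^{d/2-1}_{2,1}}$ as in \eqref{commutatorestimL2}. The admissibility checks are then:
\begin{itemize}
\item $d/2-1<1+d/2$ holds trivially;
\item $d/2-1>-d/2$ holds for $d\geq2$. The divergence-free variant only weakens this requirement.
\end{itemize}

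\textbf{Step 3: reduce the norm on $g$ to $\|\nabla g\|_{L^\infty}$.} This is the step that needs care. We want to dominate the Besov part $\|\nabla g\|_{\dot B^{d/2}_{2,\infty}}$ by $\|\nabla g\|_{L^\infty}$ via the chain $L^\infty\hookrightarrow\dot B^0_{\infty,\infty}\hookrightarrow\dot B^{d/2}_{2,\infty}$ quoted in the text, and Proposition \ref{prop:embeddings} does not supply the second link, which is only valid after rescaling. The first inclusion $L^\infty\hookrightarrow\dot B^0_{\infty,\infty}$ is immediate from $\|\dot\Delta_j u\|_{L^\infty}\lesssim\|u\|_{L^\infty}$.

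To avoid relying on the second link, I would check directly that in Step 1 the terms involving $g$ only ever use $\|\dot S_{j'-1}\nabla g\|_{L^\infty}$ or $\|\dot\Delta_{j'}\nabla g\|_{L^\infty}$. In the paraproduct and remainder terms I would put $g$ in $L^\infty$-based norms, i.e. use Lemma \ref{continuity bony decomp} with $p_2=\infty$, $p_1=2$. Each such factor is bounded by $C\|\nabla g\|_{L^\infty}$, by Bernstein \eqref{Bernstein}, \eqref{Reverse Bernstein} together with $\|\dot\Delta_{j'}g\|_{L^\infty}\lesssim 2^{-j'}\|\nabla g\|_{L^\infty}$.

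The remaining sum over $j'\sim j$, or $j'\geq j-N_0$ for the remainder, of $2^{(j-j')\sigma}2^{j'(d/2-1)}\|\dot\Delta_{j'}f\|_{L^2}$ is a convolution of an $\ell^1$ sequence with the kernel $2^{(j-j')\sigma}\mathbf{1}_{j'\geq j-N_0}$. This kernel is summable precisely because $\sigma=d/2-1>-d/2$ after the derivative transfer. It therefore yields $c_j$ with $\sum_j c_j\leq 1$ after normalizing by $\|f\|_{\dot B^{d/2-1}_{2,1}}$.

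The only genuine obstacle is the low-high remainder in dimension $d=2$ with $\sigma=0$, where summability is borderline. There I would use the gradient structure: for divergence-free $g$, write $\partial_k\dot R(g^k,f)$ and put the derivative on $g$ through the identity $\dot\Delta_{j'}g\,\partial_k\dot\Delta_{j''} f$, which restores a factor $2^{j-j'}$ with $j'\geq j$.
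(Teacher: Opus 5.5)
You are right that the chain quoted in the paper cannot be used. Its second link is not merely missing from Proposition~\ref{prop:embeddings}; it is reversed, since one has $\dot B^{d/2}_{2,\infty}\hookrightarrow\dot B^0_{\infty,\infty}$ and not the converse. So Lemma~2.100 of \cite{nonlinearPDE} only gives \eqref{commutatorestimL2} with $\|\nabla g\|_{\dot B^{d/2}_{2,\infty}\cap L^\infty}$ in place of $\|\nabla g\|_{L^\infty}$.

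The gap is in your replacement Step~3. Once $g$ enters only through $\|\dot\Delta_{j'}g\|_{L^\infty}\lesssim 2^{-j'}\|\nabla g\|_{L^\infty}$, the remainder $\dot\Delta_j\dot R(g^k,\partial_k f)$ produces the kernel $2^{(j-j')\sigma}\mathbf{1}_{j'\geq j-N_0}$, which is summable iff $\sigma>0$. The paraproduct $\dot\Delta_j\dot T_{\partial_k f}g^k$ produces $2^{(j-j'')(\sigma-1)}\mathbf{1}_{j''\leq j+N_0}$, which is summable iff $\sigma<1$. The extra $d/p_1$ at both ends of the range you checked in Step~2 comes precisely from measuring $\nabla g$ in $\dot B^{d/p_1}_{p_1,\infty}$ and applying Bernstein on the output block, and that is exactly what you discard. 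So ``summable because $d/2-1>-d/2$'' is wrong, and the checks of Step~2 do not transfer.

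With $L^\infty$ control alone the admissible range is $0<\sigma<1$, or $-1<\sigma<1$ when $\dive g=0$. In the divergence-free case, moving $\partial_k$ outside the remainder puts the derivative on the output frequency $2^j$, not on $g$, and gives the kernel $2^{(j-j')(\sigma+1)}$. Your argument therefore covers $d=3$, and $d=2$ only for divergence-free $g$; it gives nothing for $d\geq 4$.

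These missing cases cannot be closed, because the statement with $\|\nabla g\|_{L^\infty}$ is false there. In $d=2$ (so $\sigma=0$), take $g=(\cos x_1,0)$, so that $\|\nabla g\|_{L^\infty}=1$ but $\dive g\neq 0$. Take $f=e^{-ix_1}\phi$, where $\hat\phi$ is supported in $B(0,\varepsilon)$ and consists of $K$ pieces in distinct dyadic shells, each of unit $L^2$ norm. For every $j$ below the spectrum of $f$ one has $\dot\Delta_j f=0$, hence
\begin{equation*}
[g\cdot\nabla,\dot\Delta_j]f=-\dot\Delta_j(g\cdot\nabla f)=\tfrac12\dot\Delta_j\bigl(i\phi-\partial_1\phi\bigr).
\end{equation*}
Therefore $\sum_j\|[g\cdot\nabla,\dot\Delta_j]f\|_{L^2}\gtrsim\|\phi\|_{\dot B^0_{2,1}}\simeq K$, while $\|\nabla g\|_{L^\infty}\|f\|_{\dot B^0_{2,1}}\lesssim\|\phi\|_{L^2}=\sqrt K$. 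For $\sigma>1$ (i.e.\ $d\geq 5$), a block $f$ at frequency $1$ against $g$ at frequency $2^j$ loses $2^{j(\sigma-1)}$, even when $\dive g=0$.

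The fix is to keep the Besov norm. The paper only ever applies the lemma with $\|\nabla(u+V)\|_{\dot B^{d/2}_{2,1}}$ on the right, and $\dot B^{d/2}_{2,1}\hookrightarrow\dot B^{d/2}_{2,\infty}\cap L^\infty$. Your Steps~1--2, with $p=p_1=2$ and $\sigma\in\{d/2-1,d/2\}$, already give everything that is used. The value $\sigma=d/2$ is needed for the $\nabla g_j$ term, and it lies outside $\sigma<1$ anyway. Step~3 should be dropped.
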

On several occasions, it will be convenient to decompose tempered distributions $f$ into their {\em low-} and {\em high-frequency} components, defined by
\begin{equation}\label{def:split frequenze}
    f^\ell\coloneqq \sum_{2^k\nu\leq1 }\dot\Delta_k f \quad \text{and }\quad  f^h\coloneqq \sum_{2^k\nu>1 }\dot\Delta_k f.
\end{equation}
The next estimates, which will be used repeatedly, follow from \cite[Lemma~2.16]{Zhai-Li:GlobLargsol3dCNS}.
\begin{lemma}[Commutator 2]\label{commut est Lpsetting}
    Let $2\leq p\leq \min\{ 4, 2d(d-2)\} $ for $d>2$ and $2\leq p<4$ for $d=2$. Assume $A(D)$ a zero-order Fourier multiplier. For $v^\ell\in \dot B_{2,1}^{d/2-1}(\R^d) $, $v^h\in \dot B_{p,1}^{d/p-1}(\R^d)$ and $\nabla u\in \dot B_{p,1}^{d/p}(\R^d) $, we have
    \begin{equation*}
        \begin{aligned}
            &\sum_{j\leq j_0} 2^{(d/2-1)j }\|\dot \Delta_j([A(D),u\cdot\nabla]v)\|_{L^2}\leq C\left( \|\nabla u^\ell\|_{\dot B_{2,1}^{d/2}}+\|\nabla u^h\|_{\dot B_{p,1}^{d/p}} \right)\left( \|v^\ell\|_{\dot B_{2,1}^{d/2-1}}+\|v^h\|_{\dot B_{p,1}^{d/p-1}} \right),\\
            &\sum_{j\leq j_0} 2^{(d/2-1)j }\|\dot \Delta_j([A(D),u\cdot\nabla]v)\|_{L^2}\leq C \|\nabla u\|_{\dot B_{p,1}^{d/p}}\left( \|v^\ell\|_{\dot B_{2,1}^{d/2-1}}+\|v^h\|_{\dot B_{p,1}^{d/p-1}}\right), \quad \text{if } \dive u=0,
        \end{aligned}
    \end{equation*}
    for a constant dependent on $j_0$.
\end{lemma}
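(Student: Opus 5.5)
We split $v=v^\ell+v^h$ and $u=u^\ell+u^h$, expand $[A(D),u\cdot\nabla]v=A(D)(u\cdot\nabla v)-u\cdot\nabla A(D)v$ componentwise, and use the Bony decomposition of Definition \ref{def:Bony decomp}. This gives
\begin{equation*}
[A(D),u\cdot\nabla]v=[A(D),\dot T_{u^k}]\partial_k v+A(D)\bigl(\dot T_{\partial_k v}u^k+\dot R(u^k,\partial_k v)\bigr)-\bigl(\dot T_{\partial_k A(D)v}u^k+\dot R(u^k,\partial_k A(D)v)\bigr).
\end{equation*}
Since $A(D)$ is a zero-order multiplier, it is bounded on every $\dot B^s_{r,1}$. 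We therefore estimate each of the five pieces at dyadic level $j\le j_0$ in $L^2$, multiply by $2^{(d/2-1)j}$, and sum. The restriction $j\le j_0$ means every frequency we meet is at most $2^{j_0+3}$, so any Bernstein loss (Lemma \ref{lem:bernstein}) is absorbed into a constant depending on $j_0$.

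\textbf{Paraproduct commutator.} Only $|q-j|\le 4$ contributes in $\dot\Delta_j[A(D),\dot S_{q-1}u^k]\dot\Delta_q\partial_k v$. Writing $A(D)\dot\Delta_q$ as convolution with a kernel and using the first-order Taylor formula gives
\begin{equation*}
\|[A(D),\dot S_{q-1}u]\dot\Delta_q\partial v\|_{L^2}\lesssim 2^{-q}\|\nabla \dot S_{q-1}u\|_{L^{r}}\|\dot\Delta_q\partial v\|_{L^{s}},\qquad \tfrac1r+\tfrac1s=\tfrac12.
\end{equation*}
For $v^\ell$ we take $s=2$, $r=\infty$, and use $\|\nabla u^\ell\|_{L^\infty}\lesssim\|\nabla u^\ell\|_{\dot B^{d/2}_{2,1}}$ together with $\|\nabla u^h\|_{L^\infty}\lesssim \|\nabla u^h\|_{\dot B^{d/p}_{p,1}}$. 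For $v^h$ we take $s=p$ and $r=2p/(p-2)\ge p$ (this is where $p\le4$ is used), and the embedding $\dot B^{d/p}_{p,1}\hookrightarrow L^r$ after low-frequency Bernstein. Since only finitely many $q$ near $j\le j_0$ carry $v^h$, these terms are bounded by $C(j_0)\|v^h\|_{\dot B^{d/p-1}_{p,1}}$.

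\textbf{Remaining paraproducts and remainders.} These are estimated directly, with no commutator structure. For $\dot T_{\partial v}u$ we use Lemma \ref{continuity bony decomp} with $\tau=-1$ (or $\tau=d/p-d/2-1$ for $v^h$), placing $\partial v$ in $\dot B^{-1}_{\infty,\infty}$ and $u$ in the space dictated by $\nabla u$. For $\dot R(u,\partial v)$ we use the remainder bound with output in $L^2$ and inputs in $L^p\times L^p$, which requires $\frac2p\ge\frac12$, i.e. $p\le4$. The regularity sum $s_1+s_2$ must be positive, which in the $v^h$ case reads $d/p+(d/p-1)>d(2/p-1/2)$; this is the condition $p<2d/(d-2)$ (with the endpoint handled through the $\dot B^0_{p,\infty}$ remainder estimate combined with $j\le j_0$). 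In the divergence-free case we instead write $u\cdot\nabla v=\dive(u\otimes v)$ inside the remainder. The derivative then falls outside, so the condition relaxes to the one in Lemma \ref{lemmastimaprodLP}, allowing a bound by $\|\nabla u\|_{\dot B^{d/p}_{p,1}}$ alone without any $L^2$ control of $u^\ell$.

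\textbf{Main obstacle.} I expect the hardest part to be the remainders $\dot R(u^h,\partial v^h)$ and $\dot R(u^\ell,\partial v^h)$ with output at low frequency. There two high-frequency $L^p$ pieces must interact to produce an $L^2$ low-frequency piece, which is exactly where the constraints $p\le\min\{4,2d/(d-2)\}$ (and $p<4$ when $d=2$, where the endpoint fails) enter. I would handle these first, checking exponents via Lemma \ref{lemmastimaprodLP} with $(q,p)=(p,p)$ and output $\dot B^{d/2-1}_{2,1}$ obtained from $\dot B^{2d/p-1-d/p}_{p/2,1}$ by Bernstein. Summation over $j\le j_0$ is then harmless.
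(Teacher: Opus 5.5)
The paper does not prove this lemma; it quotes \cite[Lemma~2.16]{Zhai-Li:GlobLargsol3dCNS}. Your argument therefore has to stand on its own, and it has a genuine gap in the term $[A(D),\dot T_{u^k}]\partial_k v^h$ when $p>2$.

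\textbf{The failing step.} There you bound $\|\nabla\dot S_{q-1}u\|_{L^r}$, with $r=2p/(p-2)<\infty$, by $\|\nabla u\|_{\dot B^{d/p}_{p,1}}$ (or by $\|\nabla u^\ell\|_{\dot B^{d/2}_{2,1}}$). No such bound holds. These homogeneous critical spaces embed in $L^\infty$ but not in $L^r$ for $r<\infty$: Bernstein only gives $\|\dot\Delta_m\nabla u\|_{L^r}\lesssim 2^{-md/r}\,2^{md/p}\|\dot\Delta_m\nabla u\|_{L^p}$, and $2^{-md/r}\to\infty$ as $m\to-\infty$. For instance, take $\nabla u=\phi(\varepsilon\,\cdot)$ with $\varepsilon\to0$; its $\dot B^{d/p}_{p,1}$ norm stays fixed while its $L^r$ norm blows up. This loss comes from frequencies far below $2^{j_0}$, so it is not a constant depending on $j_0$.

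No other H\"older split rescues your one-commutator Taylor bound. Very low modes of $\nabla u$ are controlled only in $L^\infty$, which would force $\dot\Delta_q\partial v^h$ into $L^2$, whereas it is only in $L^p$. The interaction genuinely occurs: the spectrum of $v^h=(\mathrm{Id}-\dot S_{j_0+1})v$ meets that of $\dot\Delta_{j_0}$ on the annulus $\frac32 2^{j_0}<|\xi|\le\frac83 2^{j_0}$. So $\dot\Delta_{j_0}$ does see the blocks $q\in\{j_0,j_0+1\}$ of $v^h$ multiplied by arbitrarily low modes of $\nabla u$.

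\textbf{The missing idea: a second commutator with the frequency cut-off.} Treat the finitely many modes of $\dot S_{q-1}u$ near $2^{j_0}$ by Bernstein, which is harmless. Let $F$ be the (translated) $\nabla\dot S_k u$, $k\le j_0-3$, produced by your Taylor formula. On the annulus above, the symbol of $\dot\Delta_{j_0}$ equals $\chi(2^{-j_0-1}\xi)$, so $\dot\Delta_{j_0}(\mathrm{Id}-\dot S_{j_0+1})=\dot S_{j_0+1}(\mathrm{Id}-\dot S_{j_0+1})$. With $g=\dot\Delta_q\partial v^h$, write $\dot\Delta_{j_0}(Fg)=F\,\dot\Delta_{j_0}g+[\dot\Delta_{j_0},F]g$.

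The first term only involves $(\mathrm{Id}-\dot S_{j_0+1})\dot\Delta_q\partial v^\ell$, so it is bounded in $L^2$ by $\|F\|_{L^\infty}$ times the $v^\ell$ norm. The second term gains one more derivative on $u$:
\[
2^{-j_0}\|\nabla^2\dot S_k u\|_{L^{p^*}}\|g\|_{L^p},\qquad \|\nabla^2\dot S_k u\|_{L^{p^*}}\lesssim\sum_{m\le k}2^{m(1-d(\frac12-\frac1p))}\,2^{m\frac dp}\|\dot\Delta_m\nabla u\|_{L^p},\qquad \frac{1}{p^*}=\frac12-\frac1p .
\]
This is bounded by $2^{k(1-d(\frac12-\frac1p))}\|\nabla u\|_{\dot B^{d/p}_{p,1}}$ precisely when $p\le 2d/(d-2)$, and $p^*\ge p$ requires $p\le 4$. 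This is where the dimensional restriction is actually used.

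Your bookkeeping puts it in the wrong place. The remainders only need $2d/p-1>0$ together with $p\le 4$ (for $L^{p/2}\to L^2$). Your inequality $d/p+(d/p-1)>d(2/p-1/2)$ reduces to $d>2$, not to $p<2d/(d-2)$.
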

We recall that zero-order Fourier multipliers are operators whose symbols are bounded in frequency, and hence are bounded on $L^p$ and on Besov spaces $\dot B^s_{p,q}$ for all $s\in\mathbb{R}$. We also state the following commutator lemma, see \cite[Lemma 6.1]{DH}.
\begin{lemma}[Commutator 3]\label{lem:commutator 3}
Let $A(D)$ be a zero-oreder Fourier multiplier. Let $j_0\in\Z$, $\tau\in\R$, $1\leq p_1,p_2\leq \infty$ and $1/p=1/p_1+1/p_2$. Then, we have
\begin{align}
    \|[\dot S_{j_0}A(D), T_f]g\|_{\dot B^{\tau+s}_{p,1}}&\leq C\|\nabla f\|_{\dot B^{s-1}_{p,1}}\|g\|_{\dot B^{\tau}_{p_2,\infty}},\quad \mbox{ if }s<1,\\
    \|[\dot S_{j_0}A(D), T_f]g\|_{\dot B^{\tau+1}_{p,1}}&\leq C\|\nabla f\|_{L^{p_1}}\|g\|_{\dot B^{\tau}_{p_2,1}},\quad \mbox{ if }s=1,
\end{align}
\end{lemma}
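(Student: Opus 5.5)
The plan is to work directly from the Bony paraproduct of Definition~\ref{def:Bony decomp} and reduce the statement to a dyadic first-order Taylor (mean value) estimate. Write
$$
[\dot S_{j_0}A(D),\dot T_f]g=\sum_{j\in\Z}\bigl[\dot S_{j_0}A(D),\dot S_{j-1}f\bigr]\dot\Delta_j g .
$$
The Fourier transform of $\dot S_{j-1}f\,\dot\Delta_j g$ is supported in an annulus $2^j\tilde{\mathcal C}$. The same holds for $\dot S_{j-1}f\,\dot S_{j_0}A(D)\dot\Delta_j g$. Hence the $j$-th term is spectrally localized at scale $2^j$. Consequently, for any $\sigma\in\R$,
$$
\Bigl\|\sum_j R_j\Bigr\|_{\dot B^{\sigma}_{p,1}}\lesssim\sum_j 2^{j\sigma}\|R_j\|_{L^p},
$$
so it suffices to bound each $R_j:=[\dot S_{j_0}A(D),\dot S_{j-1}f]\dot\Delta_j g$ in $L^p$.

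\textbf{Key step.} Let $\tilde\varphi$ be a smooth bump equal to $1$ on the annulus $\mathcal C$ and supported in a slightly larger annulus. Since $\dot\Delta_j g$ is localized there, I would replace $\dot S_{j_0}A(D)$ by the operator with symbol $m_j(\xi):=\chi(2^{-j_0}\xi)A(\xi)\tilde\varphi(2^{-j}\xi)$.

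Here I use that $A$ is a zero-order multiplier in the usual sense: smooth away from the origin, with $|\partial^\alpha A(\xi)|\lesssim|\xi|^{-|\alpha|}$. Under this assumption $m_j(\xi)=\theta(2^{-j}\xi)$, where the family $\theta$ is bounded in $C^\infty_c$ uniformly in $j,j_0$. Therefore the kernel $h_j=\mathcal F^{-1}m_j$ satisfies $\||x|\,h_j\|_{L^1}\le C2^{-j}$ with $C$ independent of $j$.

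The commutator is then
$$
R_j(x)=\int h_j(x-y)\bigl(\dot S_{j-1}f(y)-\dot S_{j-1}f(x)\bigr)\dot\Delta_j g(y)\,dy .
$$
Writing the difference as $\int_0^1\nabla\dot S_{j-1}f(x+\tau(y-x))\cdot(y-x)\,d\tau$ and using Minkowski and Hölder with $1/p=1/p_1+1/p_2$ gives
$$
\|R_j\|_{L^p}\le C\,2^{-j}\|\nabla\dot S_{j-1}f\|_{L^{p_1}}\|\dot\Delta_j g\|_{L^{p_2}} .
$$
I expect this to be the only genuinely delicate point: obtaining the kernel moment bound uniformly in $j$, which requires the homogeneity and smoothness of $A$. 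It also helps that the cutoff $\chi(2^{-j_0}\cdot)$ kills all terms with $j\ge j_0+2$, so no issue arises at high frequencies.

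\textbf{Case $s=1$.} Use $\|\nabla\dot S_{j-1}f\|_{L^{p_1}}\le C\|\nabla f\|_{L^{p_1}}$. Summing,
$$
\sum_j 2^{j(\tau+1)}\|R_j\|_{L^p}\le C\|\nabla f\|_{L^{p_1}}\sum_j 2^{j\tau}\|\dot\Delta_j g\|_{L^{p_2}}=C\|\nabla f\|_{L^{p_1}}\|g\|_{\dot B^\tau_{p_2,1}} .
$$

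\textbf{Case $s<1$.} Here I would estimate
$$
\|\nabla\dot S_{j-1}f\|_{L^{p_1}}\le\sum_{k\le j-2}\|\dot\Delta_k\nabla f\|_{L^{p_1}}=2^{j(1-s)}\sum_{k\le j-2}2^{(k-j)(1-s)}\,2^{k(s-1)}\|\dot\Delta_k\nabla f\|_{L^{p_1}} .
$$
Since $1-s>0$, the sequence $c_j:=\sum_{k\le j-2}2^{(k-j)(1-s)}2^{k(s-1)}\|\dot\Delta_k\nabla f\|_{L^{p_1}}$ satisfies $\|(c_j)\|_{\ell^1}\lesssim\|\nabla f\|_{\dot B^{s-1}_{p_1,1}}$ by Young's inequality for series. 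Then
$$
\sum_j 2^{j(\tau+s)}\|R_j\|_{L^p}\le C\sum_j c_j\,2^{j\tau}\|\dot\Delta_j g\|_{L^{p_2}}\le C\|\nabla f\|_{\dot B^{s-1}_{p_1,1}}\|g\|_{\dot B^\tau_{p_2,\infty}} .
$$
This is the claimed estimate, with the $f$-norm read in $L^{p_1}$-based spaces as dictated by Hölder's inequality.
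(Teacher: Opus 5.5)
Your argument is correct and is essentially the standard proof behind the result, which the paper does not prove but cites as \cite[Lemma 6.1]{DH}: split $[\dot S_{j_0}A(D),\dot T_f]g$ into dyadic blocks, bound each block by the first-order Taylor commutator estimate $\|[\theta(2^{-j}D),F]G\|_{L^p}\lesssim 2^{-j}\|\nabla F\|_{L^{p_1}}\|G\|_{L^{p_2}}$ (cf. \cite[Lemma 2.97]{nonlinearPDE}), and sum. You are also right that the $f$-norm must be $\dot B^{s-1}_{p_1,1}$, since the $p$ in the statement is a typo, as scaling confirms; the only fix needed is that $\tilde\varphi$ must be $\equiv 1$ on the larger annulus $\tilde{\mathcal C}$ carrying the spectrum of $\dot S_{j-1}f\,\dot\Delta_j g$, not just on $\mathcal C$, so that $m_j(D)$ agrees with $\dot S_{j_0}A(D)$ on both terms of the commutator.
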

As a final ingredient from the theory of Besov spaces, we recall the following composition estimate.
\begin{lemma}[Composition]\label{composit est Lp} 
Let $G$ with $G(0)=0$ be a smooth function defined on an open interval $I$ of $\R$ containing 0. Then the following estimates
\begin{equation*}
    \|G(f)\|_{\dot B_{p,1}^s }\lesssim\|f\|_{\dot B_{p,1}^s } \quad \text{and} \quad \|G(f)\|_{\tilde{L}_T^q(\dot B_{p,1}^s) }\lesssim \|f\|_{\tilde{L}_T^q(\dot B_{p,1}^s) }
\end{equation*}
    hold true for $s>0,$ $1\leq p, \ q\leq\infty$ and $f$ valued in a bounded interval $J\subset I. $
\end{lemma}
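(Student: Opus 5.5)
The plan is to follow the classical \emph{paralinearization} (Meyer's first linearization) argument, as in \cite{nonlinearPDE}. The dependence on $\|f\|_{L^\infty}$, and hence on $J$, is absorbed into the implicit constant. First I would reduce to the case where $G$ is smooth on all of $\R$. The low-frequency cut-offs satisfy $\|\dot S_j f\|_{L^\infty}\le\|\tilde h\|_{L^1}\|f\|_{L^\infty}$, so the truncations of $f$ take values in a bounded set that may exit $I$. I would handle this by modifying $G$ outside a neighbourhood of $\overline J$. The resulting function $\tilde G$ is smooth on $\R$, satisfies $\tilde G(0)=0$, and agrees with $G$ on $J$, so $\tilde G(f)=G(f)$. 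Shrinking $I$ if needed, all derivatives of $\tilde G$ are bounded on the relevant bounded set $K$ containing the ranges of all $\dot S_jf$ and $\dot S_jf+\tau\dot\Delta_jf$, $\tau\in[0,1]$.

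Next, using the condition \eqref{cond on tempdis} and $\tilde G(0)=0$, we have $\tilde G(\dot S_j f)\to 0$ in $L^\infty$ as $j\to-\infty$ and $\tilde G(\dot S_jf)\to \tilde G(f)$ as $j\to+\infty$. This gives the telescopic decomposition
\begin{equation*}
\tilde G(f)=\sum_{j\in\Z}\bigl(\tilde G(\dot S_{j+1}f)-\tilde G(\dot S_jf)\bigr)=\sum_{j\in\Z} m_j\,\dot\Delta_j f,\qquad m_j:=\int_0^1 \tilde G'(\dot S_jf+\tau\dot\Delta_jf)\,d\tau.
\end{equation*}
The key pointwise bound is $\|\nabla^k m_j\|_{L^\infty}\le C_k 2^{jk}$ for every $k\in\mathbb N$, where $C_k$ depends on $\tilde G$ and $\|f\|_{L^\infty}$. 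It follows from the Fa\`a di Bruno formula together with the Bernstein inequalities (Lemma \ref{lem:bernstein}), which give $\|\nabla^\ell \dot S_jf\|_{L^\infty}+\|\nabla^\ell\dot\Delta_jf\|_{L^\infty}\lesssim 2^{j\ell}\|f\|_{L^\infty}$. Combined with Leibniz's rule and Bernstein again, this yields $\|\nabla^k(m_j\dot\Delta_jf)\|_{L^p}\lesssim 2^{jk}\|\dot\Delta_jf\|_{L^p}$.

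I would then localize in frequency, splitting the sum according to whether $j\ge q$ or $j<q$. For $j\ge q$ I use $\|\dot\Delta_q(m_j\dot\Delta_jf)\|_{L^p}\lesssim\|\dot\Delta_jf\|_{L^p}$. For $j<q$ I use the reverse Bernstein inequality \eqref{Reverse Bernstein} $k$ times, giving $\|\dot\Delta_q(m_j\dot\Delta_jf)\|_{L^p}\lesssim 2^{-qk}2^{jk}\|\dot\Delta_jf\|_{L^p}$. Multiplying by $2^{qs}$ leads to
\begin{equation*}
2^{qs}\|\dot\Delta_q \tilde G(f)\|_{L^p}\lesssim\sum_{j\ge q}2^{(q-j)s}\,2^{js}\|\dot\Delta_jf\|_{L^p}+\sum_{j<q}2^{(j-q)(k-s)}\,2^{js}\|\dot\Delta_jf\|_{L^p}.
\end{equation*}
Choosing an integer $k>s$, and using $s>0$, both kernels are in $\ell^1(\Z)$. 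Young's inequality for sequence convolutions then gives the $\ell^1$ bound, which is the first estimate.

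For the Chemin--Lerner version I would run the same argument pointwise in time. The constants $C_k$ are uniform in $t$ because $f$ is valued in $J$ for all times. Before summing in $j$, I take the $L^q_T$ norm of $\|\dot\Delta_q(m_j\dot\Delta_jf)\|_{L^p}$ and bound it by the same geometric weights times $\|\dot\Delta_jf\|_{L^q_TL^p}$, using Minkowski's inequality. The convolution argument then gives the $\tilde L^q_T(\dot B^s_{p,1})$ bound. I expect the main delicate point to be the first step rather than the summation. One has to justify the convergence of the telescopic series in $\mathcal S'$ in the homogeneous setting, which is exactly where \eqref{cond on tempdis} and $G(0)=0$ are used. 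One must also ensure that all intermediate arguments of $\tilde G'$ stay in a fixed compact set, so that the derivative bounds on $m_j$ are uniform in $j$ (and in $t$).
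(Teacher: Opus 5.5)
The paper gives no proof of this lemma; it is recalled from the Littlewood--Paley literature (\cite{nonlinearPDE}). Your argument is the standard proof found there: Meyer's first linearization $G(f)=\sum_j m_j\dot\Delta_j f$, the bounds $\|\nabla^k m_j\|_{L^\infty}\lesssim 2^{jk}$, and the splitting $j\ge q$ versus $j<q$ with an integer $k>s$. It is correct. This includes the reduction to a compactly modified $\tilde G$ on $\R$, which tacitly uses $\overline J\subset I$. It also includes the $\tilde L^q_T$ version, obtained by applying Minkowski's inequality in time before summing in frequency.

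One point is left implicit: that $G(f)$ itself satisfies \eqref{cond on tempdis}, i.e.\ actually belongs to the homogeneous space.

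\begin{itemize}
\item For $p<\infty$ and $0<s\le d/p$, this follows from your decomposition by bounding $\|\dot S_k(m_j\dot\Delta_j f)\|_{L^\infty}$ separately for $j\ge k$ and $j<k$.
\item For $s>d/p$ it can genuinely fail. Take $f=\sin x_1\in\dot B^s_{\infty,1}$ and $G(y)=y^2$; then $G(f)=\tfrac12-\tfrac12\cos 2x_1$ does not satisfy \eqref{cond on tempdis}.
\end{itemize}

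So in that range the first estimate should be read as a bound on $\sum_q 2^{qs}\|\dot\Delta_q G(f)\|_{L^p}$. Finally, avoid using $q$ both as the frequency index and as the time exponent of $\tilde L^q_T$.
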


We conclude this section with some furhter notations.
Given a Besov space $\dot B^s_{p,q}(\R^d)$, we denote by $L^r((0,T);\dot B^s_{p,q}(\R^d))$ the space of all measurable functions $f:[0,T]\to \dot B^s_{p,q}(\R^d)$ such that
$$
\|f\|_{L^r\dot B^s_{p,q}}:=\left(  \int_0^T\|f(t,\cdot)\|^r_{\dot B^s_{p,q}} \de t\right)^{\frac{1}{r}}<\infty,
$$
for all $1\leq p<\infty$, while for $p=\infty$ we define
$$
\|f\|_{L^\infty\dot B^s_{p,q}}:=\esssup_{t\in [0,T]}\|f(t,\cdot)\|_{\dot B^s_{p,q}}<\infty.
$$
We occasionally write $\|\cdot\|_{L^r_T\dot B^s_{p,q}}$ for the norm in $L^r((0,T);\dot B^s_{p,q}(\R^d))$.
The space of continuous functions on $[0,T]$ with values in $\dot B^s_{p,q}(\R^d)$ is denoted by $C([0,T];\dot B^s_{p,q}(\R^d))$ and is also endowed with the supremum norm $\|\cdot\|_{L^\infty\dot B^s_{p,q}}$, and we use the usual notation $C_b(\R_+;\dot B^{s}_{p,q}(\R^d))$ for the space 
$$
C(\R_+;\dot B^{s}_{p,q}(\R^d))\cap L^\infty(\R_+;\dot B^{s}_{p,q}(\R^d)).
$$

Moreover, for $1\leq p\leq \infty$ and $1\leq r\leq\infty$, we denote by $\tilde{L}^r_t\dot B^s_{p,1}$, the Chemin-Lerner type Besov space, endowed with the norm
\begin{equation}\label{def:spazi Chemin-Lerner}
\|f\|_{\tilde{L}_T^r(\dot B_{p,1}^s) }:=\sum_{j\in\Z}2^{js}\|\dot\Delta_j f\|_{L^r L^p}.
\end{equation}
We remark that, by Minkowski's inequality, it follows that
\begin{equation*}
\|f\|_{L^r_T(\dot B_{p,1}^s) } \leq \|f\|_{\tilde{L}_T^r(\dot B_{p,1}^s)}.
\end{equation*}

Lastly, we recall that we use the standard notation for the Leray projectors $\cp$ and $\cq$, which are defined as
\begin{equation*}
    \cq\coloneqq-(-\Delta)^{-1}\nabla\dive \quad \text{and } \cp\coloneqq \text{Id}+(-\Delta)^{-1}\nabla\dive.
\end{equation*}
Since $\cp$ and $\cq$ are smooth, homogeneous Fourier multipliers of degree zero, they act continuously on $\dot B_{2,1}^s(\R^d) $ for any $s\leq d/2$.

\subsection{Preliminaries on Stokes and MHD systems} This subsection is devoted to recalling several analytical results for the Stokes, incompressible MHD, and compressible MHD systems, with particular emphasis on well-posedness and energy estimates in Besov spaces.
First of all, we state a maximal regularity result in Besov spaces for the Stokes system
\begin{equation}\label{Stokes system}
   \begin{cases}
        u_t-\nu\Delta u+\nabla P=f, \\
        \dive u =g, \\
        u|_{t=0}=u_0.
  \end{cases}
\end{equation}

\begin{theorem}\cite[Theorem 4.1.1]{Danchin-Mucha regStokes} \label{thm max reg Stokes}
    Let $p\in(1,\infty)$ and $1/p-1<s<1/p.$ Let $f\in L^1((0,T);\dot B_{p,1}^s(\R^d))$,$ g\in C([0,T];\dot B_{p,1}^{s-1}(\R^d))$ with $\nabla g\in L^1((0,T);\dot B_{p,1}^s(\R^d)) $ and $u_0\in\dot B_{p,1}^s(\R^d). $ Assume in addition that 
    \begin{equation*}
        g_t=\dive B+A, \quad \text{with} \quad \mathrm{supp\,} A(t,\cdot)\subset \overline{B(0,\lambda)} \quad \text{and} \quad \int_{\R^d} A(t,x)\de x=0
    \end{equation*}
    for some $\lambda>0$ and $A,\ B\in L^1((0,T);\dot B_{p,1}^s(\R^d))$ and that the compatibility condition $\dive u_0=g|_{t=0} $ on $\R^d$ (in the distributional meaning) is satisfied. \\
    Then, the system \eqref{Stokes system} has a unique solution $(u,\nabla P)$ with
    \begin{equation*}
        u\in C([0,T);\dot B_{p,1}^s(\R^d)) \quad \text{and} \quad \partial_t u,\nabla^2 u, \nabla P\in L^1((0,T);\dot B_{p,1}^s(\R^d))
    \end{equation*}
    and the following estimate is valid
    \begin{equation}\label{max regul Stokes system}
    \begin{aligned}
        \|u\|_{L^\infty(0,T;\dot B_{p,1}^s(\R^d)) }&+\|u_t,\nu\nabla^2 u,\nabla P\|_{L^1(0,T;\dot B_{p,1}^s(\R^d))}\\
        &\leq C\left( \|f,\nu\nabla g,B\|_{L^1(0,T;\dot B_{p,1}^s(\R^d))}+\lambda \|A\|_{L^1(0,T;\dot B_{p,1}^s(\R^d))}+\|u_0\|_{\dot B_{p,1}^s(\R^d)} \right),
    \end{aligned}
    \end{equation}
    where $C$ is an absolute constant with no depedence on $\nu,$ $T$ and $\lambda.$
\end{theorem}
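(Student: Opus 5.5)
The plan is to decouple the system with the Leray projectors, so that $u=\cp u+\cq u$. The divergence-free part then solves a heat equation, the gradient part is read off directly from $g$, and the pressure is recovered from the equation. Every step uses only operators whose bounds are independent of $\nu$, $T$ and $\lambda$, which is what the statement requires of $C$.

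\emph{Step 1 (divergence-free part).} Apply $\cp$ to the first equation of \eqref{Stokes system} to get
\[
\partial_t\cp u-\nu\Delta\cp u=\cp f,\qquad \cp u|_{t=0}=\cp u_0 .
\]
Localize with $\dot\Delta_j$. The heat semigroup on a dyadic block satisfies $\|e^{\nu t\Delta}\dot\Delta_j h\|_{L^p}\le Ce^{-c\nu 2^{2j}t}\|\dot\Delta_j h\|_{L^p}$ for $1<p<\infty$. Duhamel's formula then gives
\[
\|\dot\Delta_j\cp u(t)\|_{L^p}+\nu 2^{2j}\int_0^t\|\dot\Delta_j\cp u\|_{L^p}\le C\Big(\|\dot\Delta_j u_0\|_{L^p}+\int_0^t\|\dot\Delta_j f\|_{L^p}\Big).
\]
Multiply by $2^{js}$, sum over $j$, and use Lemma~\ref{lem:bernstein}. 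This bounds $\cp u$ in $L^\infty_T\dot B^s_{p,1}$ and $\nu\nabla^2\cp u$ in $L^1_T\dot B^s_{p,1}$. The equation then bounds $\partial_t\cp u$ by the same right-hand side. Continuity in time follows from the Chemin--Lerner version of the same estimate.

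\emph{Step 2 (gradient part and pressure).} Since $\cq u=\nabla\Delta^{-1}g$, we have $\nabla^2\cq u=\nabla^2\Delta^{-1}\nabla g$. This is a zero-order multiplier applied to $\nabla g$, so $\nu\|\nabla^2\cq u\|_{L^1\dot B^s_{p,1}}\lesssim\nu\|\nabla g\|_{L^1\dot B^s_{p,1}}$, and $\cq u\in C_T\dot B^s_{p,1}$ comes from $g\in C_T\dot B^{s-1}_{p,1}$. The time derivative is
\[
\partial_t\cq u=\nabla\Delta^{-1}\dive B+\nabla\Delta^{-1}A .
\]
The first term is again a zero-order multiplier applied to $B$, so it is controlled by $\|B\|_{L^1\dot B^s_{p,1}}$. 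Finally, applying $\cq$ to the momentum equation gives
\[
\nabla P=\cq f-\partial_t\cq u+\nu\nabla g,
\]
so the pressure is bounded by the quantities already estimated.

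\emph{Step 3 (the term $\nabla\Delta^{-1}A$; main obstacle).} A Riesz-type operator of order $-1$ is unbounded at low frequencies, so this term cannot be treated by a multiplier bound. The way around it is to use the mean-zero and compact-support hypotheses on $A$: I would write $A=\dive W$ with a Bogovskii-type operator on the ball $B(0,\lambda)$. Then $\nabla\Delta^{-1}A=\nabla\Delta^{-1}\dive W$ is zero-order in $W$, and it suffices to prove
\[
\|W\|_{\dot B^s_{p,1}}\le C\lambda\|A\|_{\dot B^s_{p,1}} .
\]
I would first prove this on the unit ball and then get the factor $\lambda$ by dilation, since both sides scale the same way up to one power of $\lambda$. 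The restriction $1/p-1<s<1/p$ is exactly the range where multiplication by the indicator of a ball is bounded on $\dot B^s_{p,1}$. This is what should allow Bogovskii's $L^p$/Sobolev bounds to be transferred to the Besov scale by interpolation, and I expect this step to need the most care.

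\emph{Step 4 (uniqueness).} The system is linear, so uniqueness reduces to zero data. With $f=0$, $g=0$, $u_0=0$, Step 1 gives $\cp u=0$ and Step 2 gives $\cq u=0$, hence $\nabla P=0$.

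Summing the bounds of Steps 1–3 yields \eqref{max regul Stokes system}, with a constant that does not depend on $\nu$, $T$ or $\lambda$.
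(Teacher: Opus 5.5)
The paper gives no proof of this statement; it is quoted from Danchin--Mucha. Your Leray splitting is the natural whole-space route: a heat equation for $\cp u$, then $\cq u=\nabla\Delta^{-1}g$ and $\nabla P=\cq f-\partial_t\cq u+\nu\nabla g$. Steps 1 and 4, and the $L^1$-in-time bounds of Step 2, are sound, with constants independent of $\nu$, $T$, $\lambda$. Two points do not work as written.

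\textbf{The $L^\infty$ bound on $\cq u$.} You control $\cq u$ in $L^\infty_T\dot B^s_{p,1}$ through $g\in C_T\dot B^{s-1}_{p,1}$. But $\|g\|_{L^\infty_T\dot B^{s-1}_{p,1}}$ does not appear on the right-hand side of \eqref{max regul Stokes system}, so summing your bounds does not give the stated estimate. The missing ingredient is the compatibility condition, which you never use. Since $\cq u(0)=\nabla\Delta^{-1}g(0)=\nabla\Delta^{-1}\dive u_0=\cq u_0$, you have
\[
\cq u(t)=\cq u_0+\int_0^t\bigl(\nabla\Delta^{-1}\dive B+\nabla\Delta^{-1}A\bigr)\de\tau .
\]
This bounds $\|\cq u\|_{L^\infty_T\dot B^s_{p,1}}$ by $\|u_0\|_{\dot B^s_{p,1}}+C\|B\|_{L^1_T\dot B^s_{p,1}}$ plus the $A$-term of Step 3. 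The same condition is what makes the candidate $u:=\cp u+\nabla\Delta^{-1}g$, with $\nabla P$ defined by your formula, satisfy $u(0)=u_0$. So it is also needed for existence, which your steps only treat as a priori estimates.

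\textbf{Step 3 does not cover the full range of $s$.} Interpolating the classical Bogovskii bounds ($L^p\to W^{1,p}_0$ and $W^{1,p}_0\to W^{2,p}_0$) only reaches $s\ge0$. The theorem also allows $s\in(1/p-1,0)$, where $A$ is merely a compactly supported distribution. There you would need Bogovskii's operator on negative-order spaces (Geissert--Heck--Hieber), a substantial extra input.

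You can bypass Bogovskii by splitting at $2^j\lambda\simeq1$.
\begin{itemize}
\item For $2^j\lambda>1$, Bernstein gives $2^{js}\|\dot\Delta_j\nabla\Delta^{-1}A\|_{L^p}\lesssim 2^{-j}2^{js}\|\dot\Delta_jA\|_{L^p}\le\lambda\,2^{js}\|\dot\Delta_jA\|_{L^p}$. This is where the factor $\lambda$ comes from.
\item For $2^j\lambda\le1$, write $\dot\Delta_j\nabla\Delta^{-1}A=K_j*A$ with $K_j=2^{j(d-1)}K(2^j\cdot)$ and $K$ Schwartz. Read the zero mean as $\langle A,\theta(\cdot/\lambda)\rangle=0$, with $\theta\in C^\infty_c$ equal to $1$ near $\overline{B(0,1)}$. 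Then
\[
K_j*A(x)=\bigl\langle A,\theta(\cdot/\lambda)\bigl(K_j(x-\cdot)-K_j(x)\bigr)\bigr\rangle .
\]
Because $2^j\lambda\le1$, the test function is a bump at scale $\lambda$ of amplitude $\lesssim\lambda2^{jd}\langle2^jx\rangle^{-N}$, and each derivative costs at most $\lambda^{-1}$. The duality $\dot B^s_{p,1}\times\dot B^{-s}_{p',\infty}$ then gives $\|K_j*A\|_{L^p}\lesssim\lambda^{1+s+d/p'}2^{jd/p'}\|A\|_{\dot B^s_{p,1}}$.
\end{itemize}
Summing $2^{js}$ times the low-frequency bound over $2^j\le\lambda^{-1}$ gives $C\lambda\|A\|_{\dot B^s_{p,1}}$, because $s+d/p'>0$. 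That inequality follows from $s>1/p-1$. No indicator-multiplier property on the ball is needed for this estimate.
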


We now give a proof of a classical real interpolation result
\begin{proposition}\label{prop:real interpolation}
Let $f\in L^2\cap \dot H^1(\R^d)$. Then $f\in\dot B^\frac12_{2,1}$ and the following inequality holds
\begin{equation}
    \|f\|_{B^\frac12_{2,1}}\leq 4 \|f\|_{L^2}^\frac12\|f\|_{\dot H^1}^\frac12
\end{equation}
\end{proposition}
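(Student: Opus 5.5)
The argument is a low/high frequency splitting of the dyadic sum at a single threshold, with the threshold chosen to balance the $L^2$ and $\dot H^1$ norms. Write $\|f\|_{\dot B^{1/2}_{2,1}}=\sum_{j\in\Z}2^{j/2}\|\dot\Delta_j f\|_{L^2}$ and note first that $f\in L^2$ guarantees the low-frequency condition \eqref{cond on tempdis}: by Bernstein, $\|\dot S_j f\|_{L^\infty}\lesssim 2^{jd/2}\|f\|_{L^2}\to0$ as $j\to-\infty$. Set $a_j:=\|\dot\Delta_j f\|_{L^2}$. Two elementary bounds hold for every $j$. First, $a_j\leq \|f\|_{L^2}$, since $\varphi(2^{-j}\cdot)$ is bounded by $1$ (Plancherel). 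Second, $a_j\leq \tfrac43 2^{-j}\|f\|_{\dot H^1}$, since on the support of $\varphi(2^{-j}\cdot)$ we have $|\xi|\geq \tfrac34 2^j$, so $|\widehat{\dot\Delta_j f}|\leq \tfrac43 2^{-j}|\xi||\hat f|$. This is just the reverse Bernstein inequality \eqref{Reverse Bernstein} with an explicit constant, and I use Plancherel directly to keep the constant under control.

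Next, fix $N\in\Z$ to be chosen and split the sum. For $j\leq N$ use the first bound:
\begin{equation*}
\sum_{j\leq N}2^{j/2}a_j\leq \|f\|_{L^2}\sum_{j\leq N}2^{j/2}=\frac{2^{N/2}}{1-2^{-1/2}}\|f\|_{L^2}.
\end{equation*}
For $j>N$ use the second bound:
\begin{equation*}
\sum_{j>N}2^{j/2}a_j\leq \tfrac43\|f\|_{\dot H^1}\sum_{j>N}2^{-j/2}=\tfrac43\frac{2^{-N/2}\,2^{-1/2}}{1-2^{-1/2}}\|f\|_{\dot H^1}.
\end{equation*}
Adding the two gives $\|f\|_{\dot B^{1/2}_{2,1}}\leq C_0\bigl(2^{N/2}\|f\|_{L^2}+2^{-N/2}\|f\|_{\dot H^1}\bigr)$ with $C_0=\frac{1}{1-2^{-1/2}}\approx 3.41$, which dominates the constant $\tfrac43 2^{-1/2}C_0\approx 3.22$ in the high-frequency part.

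Finally, choose $N$ as the integer with $2^{N}\leq \|f\|_{\dot H^1}/\|f\|_{L^2}<2^{N+1}$; the case $f=0$ is trivial. Then
\begin{equation*}
2^{N/2}\|f\|_{L^2}\leq \|f\|_{L^2}^{1/2}\|f\|_{\dot H^1}^{1/2},\qquad 2^{-N/2}\|f\|_{\dot H^1}<\sqrt2\,\|f\|_{L^2}^{1/2}\|f\|_{\dot H^1}^{1/2}.
\end{equation*}
The only delicate point is the numerical constant, because the integer rounding of $N$ costs a factor up to $\sqrt2$ on one side. The crude sum $C_0(1+\sqrt2)\approx 8.2$ exceeds $4$, so the constant has to be tracked more carefully.

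To do this, I would instead estimate the two sums separately with their exact constants. Write $r=\|f\|_{\dot H^1}/\|f\|_{L^2}=2^{N+\theta}$ with $\theta\in[0,1)$. The total bound is then $\|f\|_{L^2}^{1/2}\|f\|_{\dot H^1}^{1/2}\,C_0\bigl(2^{-\theta/2}+\tfrac43 2^{-1/2}2^{\theta/2}\bigr)$. The bracket is largest at $\theta\to1$, where it equals $2^{-1/2}+\tfrac43\approx 2.04$. Multiplying by $C_0\approx3.41$ gives about $6.96$, which is still above $4$.

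This constant bookkeeping is therefore the main obstacle. If the stated value $4$ is genuinely needed, I would sharpen the argument by using that the dyadic pieces are almost orthogonal: at most two $\varphi(2^{-j}\cdot)$ overlap at any $\xi$, and $\sum_j\varphi(2^{-j}\xi)=1$. Then I would apply Cauchy--Schwarz with weights, $\sum_j 2^{j/2}a_j\leq\bigl(\sum_j w_j^{-1}2^{j}\bigr)^{1/2}\bigl(\sum_j w_j a_j^2\bigr)^{1/2}$, with $w_j=\min(2^{j}\,t^{-1},\,t\,2^{-j})^{-1}$-type weights, where $t=2^N$. This would let me control $\sum_j a_j^2\leq\|f\|_{L^2}^2$ and $\sum_j 2^{2j}a_j^2\lesssim\|f\|_{\dot H^1}^2$ jointly. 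Otherwise the qualitative statement with a universal constant is what the rest of the paper actually uses, and that follows from the splitting argument above.
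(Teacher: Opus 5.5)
\textbf{Verdict: genuine gap, but only in the numerical constant.}

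Your splitting argument is a valid qualitative proof with a universal constant. It does not prove the inequality as stated, however: the best constant you extract is about $6.96$, not $4$. The sharpening you sketch is neither carried out nor correct as written.

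\textbf{The proposed weights fail.} With $w_j=\max(t2^{-j},2^jt^{-1})$ one has $2^jw_j^{-1}=t$ for every $2^j>t$, so $\sum_j 2^jw_j^{-1}$ diverges. For $2^j<t$, the weight $t2^{-j}$ would require a $\dot H^{-1/2}$-type bound on the low frequencies, which $f\in L^2$ does not provide.

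\textbf{Where the constant is lost, and the fix.} You bound each $a_j$ individually by $\|f\|_{L^2}$ or by $\tfrac43 2^{-j}\|f\|_{\dot H^1}$ and then sum geometric series. This discards almost-orthogonality. Since $0\le\varphi\le 1$ and $\sum_j\varphi(2^{-j}\xi)=1$, Plancherel gives
\[
\sum_{j\in\Z}a_j^2\le\|f\|_{L^2}^2,\qquad \sum_{j\in\Z}2^{2j}a_j^2\le\tfrac{16}{9}\|f\|_{\dot H^1}^2 .
\]
Applying Cauchy--Schwarz separately on $j\le N$ and on $j>N$ yields
\[
\sum_{j\in\Z}2^{j/2}a_j\le \sqrt2\,2^{N/2}\|f\|_{L^2}+\tfrac43\,2^{-N/2}\|f\|_{\dot H^1}.
\]
Now optimize over $N\in\Z$. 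The dyadic rounding costs only the factor $(2^{1/4}+2^{-1/4})/2$. This gives the inequality with constant $(2^{1/4}+2^{-1/4})(4\sqrt2/3)^{1/2}\approx 2.79<4$. Equivalently, your weighted Cauchy--Schwarz works if you take $w_j=1+t^{-2}2^{2j}$.

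\textbf{Comparison with the paper.} The paper never touches the dyadic blocks. It uses the real-interpolation identity $\dot B^{1/2}_{2,1}=(L^2,\dot H^1)_{1/2,1}$ and bounds the $K$-functional by $\min\{\|f\|_{L^2},s\|f\|_{\dot H^1}\}$. It then splits the $s$-integral at the exact continuous threshold $\bar s=\|f\|_{L^2}/\|f\|_{\dot H^1}$. This gives $2+2=4$ with no rounding and no geometric-series loss.

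Be aware, though, that the paper's $4$ refers to the interpolation norm. The paper writes the identification with the Littlewood--Paley norm of Definition \ref{def:besov} as an equality, but it is only an equivalence of norms. So for the norm you are actually using, it is a direct dyadic computation like the corrected one above that justifies a constant $\le 4$.

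As you note, the later uses of the proposition (Lemma \ref{lem:M} and Section \ref{sec:magnetic reconnection}) only need a universal constant.
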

\begin{proof}
We denote by $K(s,f;L^2,\dot H^1)$ the Peetre K-functional: $\forall s\geq 0$ and $\forall f\in L^2+\dot H^1$ it is defined as
\begin{equation}
    K(s,f;L^2,\dot H^1):=\inf_{f=f_0+f_1}\|f_0\|_{L^2}+s\|f_1\|_{\dot H^1}.
\end{equation}
We recall that the Besov space $\dot B^\frac12_{2,1}(\R^d)$ can be defined via real interpolation, see \cite[Theorem 6.3.1]{BL}, as
\begin{equation*}
    (L^2(\R^d),\dot H^1(\R^d))_{1/2,1}=\dot B_{2,1}^{1/2}(\R^d),
\end{equation*}
and the norm can be computed via the Peetre K-functional as
\begin{equation}
    \|f\|_{\dot B^\frac12_{2,1}}=\int_0^\infty s^{-\frac12}K(s,f;L^2,\dot H^1)\frac{\de s}{s}.
\end{equation}
Since $f\in L^2\cap \dot H^1(\R^d)$, we have that
\begin{equation}
    K(s,f;L^2,\dot H^1)\leq \min\{\|f\|_{L^2},s\|f\|_{\dot H^1}\}.
\end{equation}
Then, by defining $\bar s=\|f\|_{L^2}/\|f\|_{\dot H^1}$, we obtain that
\begin{align*}
\|f\|_{\dot B^\frac12_{2,1}}&=\int_0^\infty s^{-\frac12}K(s,f;L^2,\dot H^1)\frac{\de s}{s}\\
&\leq \int_0^{\bar s} s^{-\frac12}s\|f\|_{\dot H^1}\frac{\de s}{s}+\int_{\bar s}^{\infty} s^{-\frac12}\|f\|_{L^2}\frac{\de s}{s}\\
&=2\|f\|_{\dot H^1}\sqrt{\bar s}+2\|f\|_{L^2}\frac{1}{\sqrt{\bar s}}=4\|f\|_{L^2}^\frac12\|f\|_{\dot H^1}^\frac12,
\end{align*}
and this concludes the proof.
\end{proof}

Now, we recall the well-posedness result of the incompressible MHD equations in the Besov space $\dot B^0_{2,1}$ from \cite{MY}. Here we complement this result with additional energy estimates, which will be used later to obtain a quantitative bound on the constant $M$ in Theorem \ref{thm:main}. 
\begin{lemma}\label{lem:M}
Let $V_0,B_0\in \dot B^0_{2,1}(\R^2)$ be divergence-free vector fields. Then, there exists a unique solution $(V,B)$ of \eqref{MHDincomp} such that
\begin{align*}
&V,B\in C_b(\R^+;\dot B^0_{2,1}(\R^2))\cap L^2(\R_{+};\dot B_{2,1}^1(\R^2)),\\
& V_t,\, B_t, \, \nabla^2 V,\, \nabla^2 B\in L^1(\R_{+};\dot B_{2,1}^0(\R^2)).
\end{align*}
Moreover, for any $T\geq 0$ the following estimate holds
\begin{equation}\label{stima espli M dim2}
   \begin{aligned}
       \|V\|_{L^\infty\dot B^0_{2,1}}&+\|V_t,\mu\nabla^2 V\|_{L^1\dot B^0_{2,1}}+\|B\|_{L^\infty\dot B^0_{2,1}}+\|B_t,\eta\nabla^2 B\|_{L^1\dot B^0_{2,1}}\\
         &\leq C\left( \|V_0\|_{\dot B_{2,1}^0}+\|B_0\|_{\dot B_{2,1}^0} \right)\exp\biggl( C\biggl( \frac{1}{\mu^4}+\frac{1}{\mu^3\eta}+\frac{1}{\eta^3\mu}+\frac{1}{\eta^4}\biggr)\left(\|V_0\|_{L^2}^4+\|B_0\|_{L^2}^4 \right)\biggr).
   \end{aligned}
\end{equation}
\end{lemma}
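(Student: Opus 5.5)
Existence and uniqueness of a global solution in $C_b(\R_+;\dot B^0_{2,1})\cap L^2(\R_+;\dot B^1_{2,1})$, with $V_t,B_t,\nabla^2V,\nabla^2B\in L^1(\R_+;\dot B^0_{2,1})$, is the $\dot B^0_{2,1}$ result of \cite{MY}, so the real work is the quantitative bound \eqref{stima espli M dim2}. The plan has three steps: the basic $L^2$ energy identity, a frequency-localized $\dot B^0_{2,1}$ estimate, and a Gronwall argument that uses the energy to bound the Gronwall exponent.

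\emph{Step 1: energy identity.} Testing \eqref{MHDincomp} against $(V,B)$ and using $\dive V=\dive B=0$, the coupling terms $\int B\cdot\nabla B\cdot V$ and $\int B\cdot\nabla V\cdot B$ cancel. This gives
\begin{equation*}
\|V(t)\|_{L^2}^2+\|B(t)\|_{L^2}^2+2\mu\int_0^t\|\nabla V\|_{L^2}^2\,\de s+2\eta\int_0^t\|\nabla B\|_{L^2}^2\,\de s=\|V_0\|_{L^2}^2+\|B_0\|_{L^2}^2=:E_0.
\end{equation*}
Consequently $\int_0^\infty\|\nabla V\|_{L^2}^2\le E_0/(2\mu)$ and $\int_0^\infty\|\nabla B\|_{L^2}^2\le E_0/(2\eta)$.

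\emph{Step 2: localized $\dot B^0_{2,1}$ estimate.} Apply $\cp\dot\Delta_j$ to the equations, take $L^2$ inner products with $\dot\Delta_jV$ and $\dot\Delta_jB$, divide by the norm, and sum over $j$. Parabolic smoothing (Theorem \ref{thm max reg Stokes} with $g=0$, $s=0$, $p=2$) gives
\begin{equation*}
X(t):=\|V,B\|_{L^\infty_t\dot B^0_{2,1}}+\|V_t,\mu\nabla^2V\|_{L^1_t\dot B^0_{2,1}}+\|B_t,\eta\nabla^2B\|_{L^1_t\dot B^0_{2,1}}\lesssim \|V_0,B_0\|_{\dot B^0_{2,1}}+\|V\cdot\nabla V,\,B\cdot\nabla B,\,V\cdot\nabla B,\,B\cdot\nabla V\|_{L^1_t\dot B^0_{2,1}}.
\end{equation*}
Each product has the form $f\cdot\nabla g$ with $f,g\in\{V,B\}$. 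By Lemma \ref{lemmastimaprod} with $d=2$ and $s_1=s_2=1/2$,
\begin{equation*}
\|f\cdot\nabla g\|_{\dot B^0_{2,1}}\lesssim\|f\|_{\dot B^{1/2}_{2,1}}\|\nabla g\|_{\dot B^{1/2}_{2,1}}.
\end{equation*}
Proposition \ref{prop:real interpolation} then gives $\|f\|_{\dot B^{1/2}_{2,1}}\lesssim\|f\|_{L^2}^{1/2}\|\nabla f\|_{L^2}^{1/2}$. Interpolating $\|\nabla g\|_{\dot B^{1/2}_{2,1}}$ between $\dot B^0_{2,1}$ and $\dot B^1_{2,1}$, followed by Young's inequality, produces
\begin{equation*}
\|f\cdot\nabla g\|_{\dot B^0_{2,1}}\le \tfrac{\min(\mu,\eta)}{4}\,\|\nabla^2 g\|_{\dot B^0_{2,1}}+C\,\frac{\|f\|_{L^2}^2\|\nabla f\|_{L^2}^2}{\min(\mu,\eta)}\,\|g\|_{\dot B^0_{2,1}}.
\end{equation*}
For the term $V\cdot\nabla V$, which only needs to be absorbed by $\mu$, the Young step is instead done with a weight matching the viscosity of $g$. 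This gives a coefficient $\mu^{-1}$ on $\|V\|_{L^2}^2\|\nabla V\|_{L^2}^2$.

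\emph{Step 3: Gronwall.} After absorbing the dissipative terms, the estimate takes the form
\begin{equation*}
X(t)\le C\|V_0,B_0\|_{\dot B^0_{2,1}}+C\int_0^t\Phi(s)\,\|V,B\|_{\dot B^0_{2,1}}(s)\,\de s,
\end{equation*}
where $\Phi$ is a combination of $\|V\|_{L^2}^2\|\nabla V\|_{L^2}^2$, $\|B\|_{L^2}^2\|\nabla B\|_{L^2}^2$ and mixed terms, weighted by powers of $1/\mu$ and $1/\eta$. Gronwall's lemma gives $X\le C\|V_0,B_0\|_{\dot B^0_{2,1}}\exp\bigl(C\int_0^\infty\Phi\bigr)$. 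Bounding $\|f\|_{L^2}^2\le E_0$ and using Step 1 for $\int_0^\infty\|\nabla f\|_{L^2}^2$ leaves quantities such as $E_0^2/(\mu\eta)$ and $E_0^2/\mu^2$. Using $E_0^2\lesssim\|V_0\|_{L^2}^4+\|B_0\|_{L^2}^4$, this yields an exponent of the claimed type.

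\emph{Main obstacle.} The hard step is reproducing the exact powers $\mu^{-4},\mu^{-3}\eta^{-1},\mu^{-1}\eta^{-3},\eta^{-4}$ in the exponent. The crude bookkeeping above gives quadratic powers, such as $E_0^2/\mu^2$, rather than quartic ones. To recover the stated powers I would redo the Young step with the alternative exponent split $\|f\|_{\dot B^{1/2}}\|g\|^{1/2}_{\dot B^0}\|g\|^{1/2}_{\dot B^2}$. This puts $\|f\|^4_{\dot B^{1/2}}$ in the Gronwall kernel, weighted by $\mu^{-3}$ (or $\mu^{-1}\eta^{-2}$ and similar), and $\|f\|^4_{\dot B^{1/2}}\le C\|f\|_{L^2}^2\|\nabla f\|_{L^2}^2$ is then integrated against Step 1. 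I expect the exact distribution of the powers of $\mu$ and $\eta$ to depend on how each cross term, for example $B\cdot\nabla V$ absorbed by $\mu$ against $B$ dissipating with $\eta$, is paired.
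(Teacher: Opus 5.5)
Your architecture is the paper's: energy identity, maximal regularity in $\dot B^0_{2,1}$, the product law $\dot B^{1/2}_{2,1}\times\dot B^{1/2}_{2,1}\to\dot B^0_{2,1}$, interpolation plus Young, Gronwall, and finally $\|f\|_{\dot B^{1/2}_{2,1}}\lesssim\|f\|_{L^2}^{1/2}\|\nabla f\|_{L^2}^{1/2}$ with the energy bounding the exponent. However, the central interpolation/Young step is wrong in both versions you give, so the argument does not close as written.

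The displayed bound in Step 2 is not crude but valid; it is false. Replace $(f,g)$ by $(f(\kappa\cdot),g(\kappa\cdot))$. The left side is invariant, while the two terms on the right scale like $\kappa$ and $\kappa^{-3}$. Optimizing in $\kappa$ would give $\|f\cdot\nabla g\|_{\dot B^0_{2,1}}\lesssim m^{1/2}\|\nabla^2 g\|^{3/4}_{\dot B^0_{2,1}}\bigl(\|f\|^2_{L^2}\|\nabla f\|^2_{L^2}\|g\|_{\dot B^0_{2,1}}\bigr)^{1/4}$. Nothing in the derivation uses the value of $m=\min(\mu,\eta)$, so letting $m\to0$ is absurd. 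The only scale-consistent power of the viscosity in that term is $-3$.

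Concretely, interpolating $\nabla g$ between $\dot B^0_{2,1}$ and $\dot B^1_{2,1}$ produces $\|\nabla g\|_{\dot B^0_{2,1}}\simeq\|g\|_{\dot B^1_{2,1}}$. That is not the quantity $\|g\|_{\dot B^0_{2,1}}$ that you Gronwall. Your fallback split $\|g\|^{1/2}_{\dot B^0_{2,1}}\|g\|^{1/2}_{\dot B^2_{2,1}}$ has the same defect. It controls $\|g\|_{\dot B^1_{2,1}}$, not $\|\nabla g\|_{\dot B^{1/2}_{2,1}}\simeq\|g\|_{\dot B^{3/2}_{2,1}}$. Moreover, Young on a $\tfrac12$--$\tfrac12$ split gives $\e^{-1}\|f\|^2_{\dot B^{1/2}_{2,1}}$, not the $\e^{-3}\|f\|^4_{\dot B^{1/2}_{2,1}}$ you then assert.

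The correct step is the $\tfrac14$--$\tfrac34$ split followed by Young with exponents $4$ and $\tfrac43$. This is exactly what the paper does, via $\|Z\|_{\dot B^{1/2}_{2,1}}\lesssim\|Z\|^{1/4}_{\dot B^{-1}_{2,1}}\|\nabla Z\|^{3/4}_{\dot B^0_{2,1}}$ with $Z=\nabla g$:
\begin{equation*}
\|f\|_{\dot B^{1/2}_{2,1}}\|\nabla g\|_{\dot B^{1/2}_{2,1}}\lesssim\|f\|_{\dot B^{1/2}_{2,1}}\|g\|^{1/4}_{\dot B^0_{2,1}}\|\nabla^2 g\|^{3/4}_{\dot B^0_{2,1}}\le\e\mu_g\|\nabla^2 g\|_{\dot B^0_{2,1}}+\frac{C}{(\e\mu_g)^3}\|f\|^4_{\dot B^{1/2}_{2,1}}\|g\|_{\dot B^0_{2,1}},
\end{equation*}
where $\mu_V:=\mu$ and $\mu_B:=\eta$. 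The first term is absorbed after adding the $V$- and $B$-estimates; for example, $B\cdot\nabla V$ from the induction equation is absorbed by $\mu\|\nabla^2V\|_{L^1\dot B^0_{2,1}}$ from the momentum estimate.

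Your Step 1 gives $\int_0^\infty\|f\|^4_{\dot B^{1/2}_{2,1}}\de t\le C\|f\|^2_{L^\infty_tL^2}\|\nabla f\|^2_{L^2_tL^2}\le CE_0^2/\mu_f$, with $E_0<\infty$ because $\dot B^0_{2,1}(\R^2)\hookrightarrow L^2$. Hence each product $f\cdot\nabla g$ contributes $\mu_g^{-3}\mu_f^{-1}E_0^2$ to the Gronwall exponent:
\begin{equation*}
\mu^{-4}\ (V\cdot\nabla V),\qquad \mu^{-3}\eta^{-1}\ (B\cdot\nabla V),\qquad \mu^{-1}\eta^{-3}\ (V\cdot\nabla B),\qquad \eta^{-4}\ (B\cdot\nabla B).
\end{equation*}
Together with $E_0^2\le 2(\|V_0\|^4_{L^2}+\|B_0\|^4_{L^2})$, this is exactly the stated exponent. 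So the pairing ambiguity you worried about does not arise.
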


\begin{proof}
First of all, in the two-dimensional case we have the embedding
$$\dot B_{2,1}^0(\R^2)\hookrightarrow L^2(\R^2). $$ 
Thus, the initial data $V_0,B_0\in L^2(\R^2)$ and from \cite{DL72} one has that there exists a unique global solution to \eqref{MHDincomp} satisfying
\begin{equation}\label{reg v B}
    (V,B)\in C_b([0,\infty);L^2(\R^2))\cap L^2([0,\infty); \dot H^1(\R^2)).
\end{equation}
In \cite{MY}, it is proved that the additional regularity is propagated and then the solution belongs to
\begin{equation}
    (V,B)\in C([0,\infty]),\dot B_{2,1}^0(\R^2)).
\end{equation}
By Proposition \ref{prop:real interpolation} we have that
\begin{equation*}
    \|V\|_{(L^2(\R^2),\dot H^1(\R^2))_{1/2,1} }\lesssim \|V\|_{L^2}^{1/2}\|V\|_{\dot H^1}^{1/2},
\end{equation*}
and then, the following bound holds
\begin{equation}\label{interpL4B1/2}
    \begin{aligned}
        \|V\|_{L^4\dot B_{2,1}^{1/2}}^4&
        \lesssim \int_0^{+\infty} \left(\|V\|_{L^2}^{1/2}\|V\|_{\dot H^1}^{1/2} \right)^4\de t\lesssim \|V\|_{L^\infty L^2}^2\|V\|_{L^2 \dot H^1}^2.
    \end{aligned}
\end{equation}
Since $V$ and $B$ share the same regularity, the analogous bound holds for $B$, and therefore
\begin{equation}
   ( V,B)\in L^4([0,\infty);\dot B_{2,1}^{1/2} (\R^2)).
\end{equation}
Then, we use the divergence-free assumption and Lemma \ref{lemmastimaprod} to bound the product
\begin{equation*}
    \begin{aligned}
        \int_0^{+\infty} \|V\cdot\nabla B\|_{\dot B_{2,1}^{-1} }^2\de t=&\int_0^{+\infty} \|V\otimes B\|_{\dot B_{2,1}^{0}}^2\de t 
        \lesssim \int_0^{+\infty} \|V\|_{\dot B_{2,1}^{1/2}}^2\|B\|_{\dot B_{2,1}^{1/2}}^2 \de t
        \lesssim \|V\|_{L^4\dot B_{2,1}^{1/2}}\|B\|_{L^4\dot B_{2,1}^{1/2}}.
    \end{aligned}
\end{equation*}
A similar argument yields that 
\begin{equation*}
    V\cdot\nabla V,\ B\cdot\nabla B,\ V\cdot\nabla B,\ B\cdot\nabla V \in L^2(\R_{+};\dot B_{2,1}^{-1}(\R^2)).
\end{equation*}
Using the equation \ref{MHDincomp}, one can also deduce that 
\begin{equation}\label{reg V B da sist}
   V,B\in L^2(\R_{+};\dot B_{2,1}^1(\R^2)). 
\end{equation}
Finally, applying Lemma \ref{lemmastimaprod} again, we conclude that $$V\cdot\nabla V,\ B\cdot\nabla B,\ V\cdot\nabla B,\ B\cdot\nabla V\in L^1(\R_{+};\dot B_{2,1}^0),$$ and thus
\begin{equation}
\begin{aligned}
     &V_t, \ \nabla^2 V\in L^1(\R_{+};\dot B_{2,1}^0(\R^2)), \\
     &B_t,\ \nabla^2 B\in L^1(\R_{+};\dot B_{2,1}^0(\R^2)).
\end{aligned}    
\end{equation}
To summarize, in dimension two, for incompressible initial data $V_0, B_0\in \dot B_{2,1}^0(\R^2)$, there exists a unique global smooth solution belonging to the regularity class
\begin{equation}
    (V,B)\in C_{b}(\R_{+};\dot B_{2,1}^{0}(\R^2) )\cap L^1(\R_{+};\dot B_{2,1}^{2}(\R^2)).
\end{equation}
We now prove \eqref{stima espli M dim2}. 
From the endpoint maximal regularity estimates of the Stokes system in homogeneous Besov spaces (see Theorem \ref{thm max reg Stokes}),
we have that for all $T\geq0$
\begin{align}
    &\|V\|_{L^\infty\dot B^0_{2,1}}+\|V_t,\mu\nabla^2 V\|_{L^1\dot B^0_{2,1} }\leq 
    C\left(\|V\cdot\nabla V\|_{L^1\dot B^0_{2,1}}+\|B\cdot\nabla B\|_{L^1\dot B^0_{2,1}}+\|V_0\|_{\dot B^0_{2,1}} \right),\label{maxregV}\\
    &\|B\|_{L^\infty\dot B^0_{2,1}}+\|B_t,\eta\nabla^2 B\|_{L^1\dot B^0_{2,1}}\leq 
    C\left(\|V\cdot\nabla B\|_{L^1\dot B^0_{2,1}}+\|B\cdot\nabla V\|_{L^1\dot B^0_{2,1}}+\|B_0\|_{\dot B^0_{2,1}} \right)\label{maxregB},
\end{align}
where the temporal norms are computed over the interval $(0,T)$.
In order to estimate the nonlinear terms in \eqref{maxregV} and \eqref{maxregB}, we use Lemma \ref{lemmastimaprod}, together with the interpolation inequality
\begin{equation}
    \|Z\|_{\dot B_{2,1}^{1/2}}\leq C \|Z\|_{\dot B_{2,1}^{-1}}^{1/4}\|\nabla Z\|_{\dot B_{2,1}^{0}}^{3/4},
\end{equation}
and we deduce that
\begin{equation*}
    \begin{aligned}
        \|V\cdot\nabla V\|_{L^1\dot B_{2,1}^{0}}\leq& C\int_0^T \|V\|_{\dot B_{2,1}^{1/2}}\|\nabla V\|_{\dot B_{2,1}^{1/2}}\de t
            \leq C\int_0^T \|V\|_{\dot B_{2,1}^{1/2}}\|\nabla V\|_{\dot B_{2,1}^{-1}}^{1/4}\|\nabla^2 V\|_{\dot B_{2,1}^{0}}^{3/4}\de t\\
            \leq & \frac{C}{\epsilon^3\mu^3}\int_0^T \|V\|_{\dot B_{2,1}^{1/2}}^{4}\|V\|_{\dot B_{2,1}^{0}}\de t+\epsilon\mu \|\nabla^2 V\|_{L^1\dot B_{2,1}^{0}}.
    \end{aligned}
\end{equation*}
By a similar argument, we have that
\begin{equation*}
    \begin{aligned}
        & \|B\cdot\nabla B\|_{L^1\dot B_{2,1}^{0}}\leq \frac{C}{\epsilon^3\eta^3}\int_0^T \|B\|_{\dot B_{2,1}^{1/2}}^{4}\|B\|_{\dot B_{2,1}^{0}}\de t+\epsilon\eta \|\nabla^2 B\|_{L^1\dot B_{2,1}^{0}}, \\
        &\|V\cdot\nabla B\|_{L^1\dot B_{2,1}^{0}}\leq \frac{C}{\epsilon^3\eta^3}\int_0^T \|V\|_{\dot B_{2,1}^{1/2}}^{4}\|B\|_{\dot B_{2,1}^{0}}\de t+\epsilon\eta \|\nabla^2 B\|_{L^1\dot B_{2,1}^{0}}, \\
         &\|B\cdot\nabla V\|_{L^1\dot B_{2,1}^{0}}\leq \frac{C}{\epsilon^3\mu^3}\int_0^T \|B\|_{\dot B_{2,1}^{1/2}}^{4}\|V\|_{\dot B_{2,1}^{0}}\de t+\epsilon\mu \|\nabla^2 V\|_{L^1\dot B_{2,1}^{0}}.
    \end{aligned}
\end{equation*}
Therefore, combining \eqref{maxregV} and \eqref{maxregB} and taking $\epsilon$ small enough, we obtain
\begin{equation*}
    \begin{aligned}
        \|V\|_{L^\infty\dot B^0_{2,1}}&+\|V_t,\mu\nabla^2 V\|_{L^1\dot B^0_{2,1}}+\|B\|_{L^\infty\dot B^0_{2,1}}+\|B_t,\eta\nabla^2 B\|_{L^1\dot B^0_{2,1}}\\
       & \leq C \biggl(\frac{1}{\mu^3}\int_0^T \|V\|_{\dot B_{2,1}^{1/2}}^4\|V\|_{\dot B_{2,1}^{0}}\de t 
        +\frac{1}{\eta^3}\int_0^T \|B\|_{\dot B_{2,1}^{1/2}}^4\|B\|_{\dot B_{2,1}^{0}}\de t\\
       &+\frac{1}{\mu^3}\int_0^T \|B\|_{\dot B_{2,1}^{1/2}}^4\|V\|_{\dot B_{2,1}^{0}}\de t
       +\frac{1}{\eta^3}\int_0^T \|V\|_{\dot B_{2,1}^{1/2}}^4\|B\|_{\dot B_{2,1}^{0}}\de t+\|V_0\|_{\dot B_{2,1}^{0}}+\|B_0\|_{\dot B_{2,1}^{0}} \biggr).
    \end{aligned}
\end{equation*}
Thus, by Gronwall Lemma we get
\begin{equation}\label{bound1M}
    \begin{aligned}
         \|V\|_{L^\infty\dot B^0_{2,1}}&+\|V_t,\mu\nabla^2 V\|_{L^1\dot B^0_{2,1}}+\|B\|_{L^\infty\dot B^0_{2,1}}+\|B_t,\eta\nabla^2 B\|_{L^1\dot B^0_{2,1}}\\
         &\leq C\left( \|V_0\|_{\dot B_{2,1}^0}+\|B_0\|_{\dot B_{2,1}^0} \right)\text{exp}\biggl( C\biggl( \frac{1}{\mu^3}+\frac{1}{\eta^3}\biggr)\int_0^T \|V\|_{\dot B_{2,1}^{1/2}}^4+\|B\|_{\dot B_{2,1}^{1/2}}^4 \de t\biggr).
    \end{aligned}
\end{equation}
From the interpolation inequality \eqref{interpL4B1/2} and the energy balance 
\begin{align}
    \|V(t)\|_{L^2}^2+\|B(t)\|_{L^2}^2+2\mu\int_0^t\|\nabla V(\tau)\|_{L^2}^2\de \tau+2\eta\int_0^t\|\nabla B(\tau)\|_{L^2}^2\de \tau=\|V_0\|_{L^2}^2+\|B_0\|_{L^2}^2
\end{align}
it follows that
\begin{equation*}
    \mu^{1/4}\|V\|_{L^4\dot B_{2,1}^{1/2}}+\eta^{1/2}\|B\|_{L^4\dot B_{2,1}^{1/2}}\leq C\left(\|V_0\|_{L^2}+\|B_0\|_{L^2} \right).
\end{equation*}
Combining this latter with \eqref{bound1M}, we finally get \eqref{stima espli M dim2}. This concludes the proof.
\end{proof}
We conclude this section by recalling some basic results concerning the well-posedness of the system \eqref{MHDcomp}. First, we recall the local well-posedness result in \cite[Theorem 1.1]{JPG}.
\begin{theorem}\label{thm:lwp}
    Let $\bar\rho>0$ and $c_0>0.$ Assume that the initial data $(\rho_0,v_0,b_0)$ satisfy
    \begin{equation*}
        \rho_0-\bar\rho\in \dot B_{p,1}^{d/p}, \quad c_0\leq\rho_0\leq c_0^{-1},\quad v_0,\ b_0\in \dot B_{p,1}^{d/p-1}, \quad \dive b_0=0.
    \end{equation*}
    Then, there exists a positive time $T>0$ such that if $p\in [2,2d)$, the system \eqref{MHDcomp} has a unique solution $(\rho-\bar\rho,v,b),$ which satisfies
    \begin{equation*}
        \begin{aligned}
            &\rho-\bar\rho\in \tilde C([0,T];\dot B_{p,1}^{d/p}), \quad \frac{1}{2}c_0\leq \rho\leq 2c_0^{-1},\\
            &v,\ b\in \tilde C([0,T];\dot B_{p,1}^{d/p-1})\cap L^1([0,T];\dot B_{p,1}^{d/p+1}),
        \end{aligned}
    \end{equation*}
where $\tilde C([0,T];\dot B_{p,1}^{d/p-1})=\tilde L_T^\infty(\dot B_{p,1}^{d/p-1})\cap C([0,T];\dot B_{p,1}^{d/p-1})$.
\end{theorem}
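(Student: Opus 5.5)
The plan is to follow the Danchin-type approach to local well-posedness of the compressible Navier--Stokes equations in critical Besov spaces, and to treat the magnetic field as an additional parabolic unknown with the same scaling as $v$. Set $a=\rho-\bar\rho$ and divide the momentum equation by $\rho$. The system then becomes a transport equation for $a$,
\begin{equation*}
a_t+v\cdot\nabla a=-(\bar\rho+a)\dive v,
\end{equation*}
coupled with the Lamé-type equation
\begin{equation*}
v_t+v\cdot\nabla v-\frac{1}{\rho}\bigl(\mu\Delta v+(\lambda+\mu)\nabla\dive v\bigr)+\nabla G(a)=\frac{1}{\rho}\Bigl(b\cdot\nabla b-\tfrac12\nabla|b|^2\Bigr),
\end{equation*}
where $G'(s)=P'(s)/s$, and with the heat equation $b_t-\eta\Delta b=b\cdot\nabla v-v\cdot\nabla b-b\dive v$. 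The quantities $1/\rho-1/\bar\rho$ and $G(a)-G(0)$ are controlled in $\dot B^{d/p}_{p,1}$ by Lemma \ref{composit est Lp}, because $\rho$ takes values in the compact interval $[c_0/2,2c_0^{-1}]$.

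\textbf{Step 1: linear estimates.}
\begin{itemize}
\item[(i)] For the transport equation, use the standard $\tilde L^\infty_T(\dot B^{d/p}_{p,1})$ estimate with the factor $\exp\bigl(C\|\nabla v\|_{L^1_T\dot B^{d/p}_{p,1}}\bigr)$. This relies on the commutator Lemma \ref{lemma commut 2.100} and its $L^p$ analogue.
\item[(ii)] For the heat equation in $b$, use maximal regularity in $\tilde L^\infty_T(\dot B^{d/p-1}_{p,1})\cap L^1_T(\dot B^{d/p+1}_{p,1})$.
\item[(iii)] For the Lamé operator with rough coefficient $1/\rho$, split $1/\rho=\dot S_{m}(1/\rho)+(\mathrm{Id}-\dot S_m)(1/\rho)$ with $m$ large. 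The low-frequency part is smooth, so it can be handled by localized energy estimates. The high-frequency part is small in $\dot B^{d/p}_{p,1}$, so it can be absorbed. Here the product estimates of Lemma \ref{lemmastimaprodLP} in $\dot B^{d/p-1}_{p,1}$ require $p<2d$, and this is exactly where that hypothesis enters.
\end{itemize}
All of these estimates hold on short time intervals $T$.

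\textbf{Step 2: approximate solutions and uniform bounds.} Build approximate solutions by Friedrichs smoothing of the data, or by a Picard iteration in which the transport and linear parabolic problems are solved at each step. Let $(v_L,b_L)$ be the solutions of the free heat/Lamé problems with data $(v_0,b_0)$. Choose $T$ so small that $\|v_L,b_L\|_{L^1_T\dot B^{d/p+1}_{p,1}}\le\varepsilon$. This is possible because the data lie in spaces with last index $1$, so the high-frequency tail is uniformly small. Then show that the iterates stay in a small ball around $(v_L,b_L)$ in $L^1_T\dot B^{d/p+1}_{p,1}\cap\tilde L^\infty_T\dot B^{d/p-1}_{p,1}$, and that $a$ stays bounded in $\tilde L^\infty_T\dot B^{d/p}_{p,1}$. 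The lower bound $\rho\ge c_0/2$ follows from the transport equation, since $\|\dive v\|_{L^1_TL^\infty}\le\varepsilon$. The magnetic nonlinearities $b\otimes b$ and $b\otimes v$ scale like $v\otimes v$, so they are estimated exactly as the convective term.

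\textbf{Step 3: convergence, uniqueness, continuity.} Estimate differences of iterates (or of two solutions) in the weaker spaces
\begin{equation*}
\delta a\in\tilde L^\infty_T\dot B^{d/p-1}_{p,1},\qquad \delta v,\,\delta b\in \tilde L^\infty_T\dot B^{d/p-2}_{p,1}\cap L^1_T\dot B^{d/p}_{p,1}.
\end{equation*}
The products needed there, for instance $\delta a\,\nabla^2 v$ in $\dot B^{d/p-2}_{p,1}$, are admissible under Lemma \ref{lemmastimaprodLP} precisely when $p<2d$. (For $d=2$ the endpoint may require the logarithmic interpolation/Osgood trick, which I would try only if the direct product estimate fails.) Contraction in these norms gives convergence and uniqueness, and time continuity follows from the equations.

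The main obstacle I expect is Step 1(iii): closing the estimate for the variable-coefficient Lamé system at the critical regularity while keeping the constants uniform. I would first try the low/high-frequency coefficient splitting combined with Lemma \ref{lem:commutator 3}, as in Danchin's work.
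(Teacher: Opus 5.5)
The paper gives no proof of this statement; it is quoted from \cite{JPG}. Your Steps 1--2 are the standard Eulerian a priori scheme and are fine. Step 3, however, has a genuine gap.

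The critical products in the difference estimate are
\[
\delta(1/\rho)\,\bigl(\mu\Delta v+(\lambda+\mu)\nabla\dive v\bigr)
\quad\text{and}\quad
\delta(1/\rho)\,\bigl(b\cdot\nabla b-\tfrac12\nabla|b|^2\bigr),
\]
the second coming from the Lorentz force. In both, $\delta(1/\rho)\sim\delta a\in\dot B^{d/p-1}_{p,1}$ and the other factor lies in $L^1_T\dot B^{d/p-1}_{p,1}$. To land in $\dot B^{d/p-2}_{p,1}$, Lemma \ref{lemmastimaprodLP} with $p\geq 2$ requires $s_1+s_2=2(d/p-1)>0$, i.e.\ $p<d$, not $p<2d$. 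Moving a factor to $\dot B^{d/q-1}_{q,1}$ with $q\geq p$ only turns the condition into $d/p+d/q>2$, which is worse.

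The obstruction is structural. Because of the source term $\delta v\cdot\nabla a$ in the transport equation, you must measure $\delta a$ one derivative below $a$, hence at nonpositive regularity once $p\geq d$. Yet $\delta a$ multiplies second-order quantities of that same regularity.

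So your contraction closes only for $p<d$, and at $p=d$ only with a logarithmic/Osgood refinement. Your remark about an endpoint in $d=2$ is a symptom of this: the endpoint is $p=d$, not $p=2d$. In $d=2$ the argument misses all of $(2,4)$, and in $d=3$ all of $[3,6)$. This is exactly why the Eulerian approach historically gave existence for $p<2d$ but uniqueness only up to $p=d$. The magnetic equation itself is harmless: since $\dive b=0$, its right-hand side $b\cdot\nabla v-v\cdot\nabla b-b\,\dive v$ is the divergence of the antisymmetric tensor $v\otimes b-b\otimes v$. The trouble sits entirely in the momentum equation.

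Existence in the full range can be recovered by replacing the contraction with compactness. The uniform bounds of Step 2 for Friedrichs-regularized problems, together with bounds on time derivatives, let you pass to the limit. Since Step 3 is what makes your Picard iterates converge, this replacement is actually needed for $p\geq d$.

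Uniqueness for $p\in[d,2d)$ requires a different idea. The standard one is Danchin's Lagrangian approach. In the coordinates given by the flow $X$ of $v$:
\begin{itemize}
\item the density is explicit, $\rho_0\,J_X^{-1}$, so there is no transport equation and no loss of derivative;
\item the resistive magnetic equation stays parabolic, so it enters the fixed point like the velocity;
\item differences of two solutions can be estimated in the same space $\tilde L^\infty_T(\dot B^{d/p-1}_{p,1})\cap L^1_T(\dot B^{d/p+1}_{p,1})$ as in the existence part, where the product laws only need $p<2d$.
\end{itemize}
You then return to Eulerian variables using the smallness of $\|\nabla v\|_{L^1_T L^\infty}$.
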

Lastly, we recall the continuation criterion proved in \cite[Proposition 2.6]{LMW}.
\begin{theorem}\label{thm:continuation}
    Assume that the system \eqref{MHDcomp} has a solution $(\rho,v,b)$ on $[0,T)\times \R^d$ which, for all $ T'<T$, belongs to 
    \begin{equation*}
        E_{T'}:=\tilde C([0,T];\dot B_{2,1}^{d/2})\times \tilde C([0,T];\dot B_{2,1}^{d/2-1})\cap L^1([0,T];\dot B_{2,1}^{d/2+1})\times\tilde C([0,T];\dot B_{2,1}^{d/2-1})\cap L^1([0,T];\dot B_{2,1}^{d/2+1}),
    \end{equation*}
    and satisfies
    \begin{equation*}
    \begin{aligned}
          &\rho-1\in L^{\infty}((0,T);\dot B_{2,1}^{d/2}), \quad  \inf_{(t,x)\in[0,T)\times\R^d} \rho(t,x)>0,\\
        &\int_0^T \|\nabla v\|_{\dot B_{2,1}^{d/2}}\de t<\infty, \quad \int_0^T \|\nabla b\|_{\dot B_{2,1}^{d/2}}\de t<\infty.
    \end{aligned}
    \end{equation*}
    Then there exists a $T^*>T$ such that $(\rho,v,b)$ may be extended to a solution of \eqref{MHDcomp} on $[0,T^*]\times \R^d$ which belongs to $E_{T^*}.$
\end{theorem}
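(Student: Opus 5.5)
The plan is the standard continuation scheme for critical spaces. First we show that every norm entering $E_{T'}$ stays bounded as $T'\uparrow T$. Then we show that $(\rho,v,b)(t)$ has a limit at $t=T$ in the data space of Theorem \ref{thm:lwp}. Finally we restart the local theory from times $t_0$ close to $T$, with an existence time bounded below independently of $t_0$, so that the solution extends beyond $T$.

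\emph{Step 1: uniform bounds.} Write $a=\rho-1$ and $I(t)=1/\rho(t)$. The hypotheses give a bound on $\|a\|_{L^\infty_T\dot B^{d/2}_{2,1}}$ and $\inf\rho>0$, so Lemma \ref{composit est Lp} bounds $I-1$ in $L^\infty_T\dot B^{d/2}_{2,1}$. Estimating the transport equation $a_t+v\cdot\nabla a=-(1+a)\dive v$ blockwise, with the commutator bound of Lemma \ref{lemma commut 2.100} and Gronwall in $\int_0^T\|\nabla v\|_{\dot B^{d/2}_{2,1}}$, upgrades this to a bound in $\tilde L^\infty_T\dot B^{d/2}_{2,1}$.

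For $v$ and $b$ we rewrite the momentum equation as a Lamé system with variable coefficient $I$, and the induction equation as a heat equation. The forcing terms are $v\cdot\nabla v$, $I\,\nabla P$, $I(b\cdot\nabla b-\tfrac12\nabla|b|^2)$, $b\,\dive v$ and $b\cdot\nabla v$. Each is estimated in $L^1_t\dot B^{d/2-1}_{2,1}$ by Lemma \ref{lemmastimaprod}, in the form
\[
\|v\|_{\dot B^{d/2-1}_{2,1}}\|\nabla v\|_{\dot B^{d/2}_{2,1}} \quad\text{or}\quad \|b\|_{\dot B^{d/2-1}_{2,1}}\|\nabla b\|_{\dot B^{d/2}_{2,1}},
\]
plus terms controlled by the bound on $a$. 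Gronwall, using the two finite time integrals in the hypothesis, then gives bounds on $\|v,b\|_{\tilde L^\infty_T\dot B^{d/2-1}_{2,1}}$ and on $\|v,b\|_{L^1_T\dot B^{d/2+1}_{2,1}}$.

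\emph{Main obstacle: the variable viscosity.} The delicate part of Step 1 is the term $(I-1)(\mu\Delta v+(\lambda+\mu)\nabla\dive v)$, because $I-1$ is bounded but not small in $\dot B^{d/2}_{2,1}$. I would split $I-1=\dot S_m(I-1)+(\mathrm{Id}-\dot S_m)(I-1)$. The low-frequency part is smooth and is handled by commutators (Lemma \ref{lem:commutator 3}), at the price of lower-order terms carrying a factor $2^{m}$; these are absorbed via interpolation and Gronwall. The high-frequency part must be small uniformly on $[0,T)$.

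I expect this uniform smallness to be the hardest point. I would obtain it from the $\tilde L^\infty_T$ bound together with time continuity: $a_t\in L^1_T\dot B^{d/2-1}_{2,1}$ makes $a$ uniformly continuous in time, hence $\{a(t)\}_{t<T}$ is relatively compact in $\dot B^{d/2}_{2,1}$. A compact set has uniformly small high-frequency tails, so one $m$ works for all $t<T$.

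\emph{Step 2: limits at $T$.} By the equations, $v_t,b_t\in L^1_T\dot B^{d/2-1}_{2,1}$ and $a_t\in L^1_T\dot B^{d/2-1}_{2,1}$, so $v(t)$, $b(t)$ and $a(t)$ are Cauchy as $t\to T$ and have limits in $\dot B^{d/2-1}_{2,1}$. The Chemin--Lerner bound gives uniform smallness of high-frequency tails, which upgrades the convergence of $a$ to $\dot B^{d/2}_{2,1}$. Moreover $\rho(T)$ keeps the positive lower bound, and $\dive b(T)=0$.

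\emph{Step 3: restart.} In critical spaces the existence time in Theorem \ref{thm:lwp} is not a function of the norm alone. It is fixed by requiring quantities such as $\|e^{t\Delta}v_0\|_{L^1_t\dot B^{d/2+1}_{2,1}}$ and the high-frequency part of $a_0$ to be small. These are uniform over a compact set of data, and the family $\{(a,v,b)(t_0)\}_{t_0\in[T-\delta,T]}$ is compact by Step 2. Hence there is $\tau>0$ such that the local solution from $(\rho,v,b)(t_0)$ exists on $[t_0,t_0+\tau]$ for every $t_0\le T$. Choosing $t_0>T-\tau$ and using uniqueness gives an extension to $[0,T^*]$ with $T^*=t_0+\tau>T$, belonging to $E_{T^*}$.
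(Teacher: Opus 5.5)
The paper does not prove this statement; it imports it from \cite[Proposition 2.6]{LMW}. Your outline (a priori bounds on $[0,T)$, a limit $(\rho,v,b)(T)$ in the data space, then a restart) is the standard argument for such criteria, and it is correct. Two of its ingredients are superfluous, however, and what you call the main obstacle is not one.

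The hypothesis $\int_0^T\|\nabla v\|_{\dot B^{d/2}_{2,1}}\,\de t<\infty$ is exactly $v\in L^1_T\dot B^{d/2+1}_{2,1}$, and likewise for $b$. So there is nothing to prove about these norms. The variable-viscosity term is then controlled directly by the product law, with no smallness needed:
$\|(I-1)(\mu\Delta v+(\lambda+\mu)\nabla\dive v)\|_{L^1_T\dot B^{d/2-1}_{2,1}}\lesssim\|I-1\|_{L^\infty_T\dot B^{d/2}_{2,1}}\|v\|_{L^1_T\dot B^{d/2+1}_{2,1}}$.
The splitting $I-1=\dot S_m(I-1)+(\mathrm{Id}-\dot S_m)(I-1)$ is what one needs to \emph{construct} local solutions, not to continue them.

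In fact no parabolic estimate is needed at all. The equations bound $\|v_t,b_t\|_{\dot B^{d/2-1}_{2,1}}$ by $C(\|v\|_{\dot B^{d/2-1}_{2,1}}+\|b\|_{\dot B^{d/2-1}_{2,1}})(\|\nabla v\|_{\dot B^{d/2}_{2,1}}+\|\nabla b\|_{\dot B^{d/2}_{2,1}})$, plus a function already known to be integrable on $(0,T)$. Here $C$ depends on $\|a\|_{L^\infty_T\dot B^{d/2}_{2,1}}$ and $\inf\rho$. Gronwall, applied first to $\|v(t),b(t)\|_{\dot B^{d/2-1}_{2,1}}$ and then blockwise, gives the $\tilde L^\infty_T\dot B^{d/2-1}_{2,1}$ bounds.

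Your compactness detour for the uniform smallness of the high frequencies of $a$ is also unnecessary, and as phrased it is slightly off. Uniform continuity in $\dot B^{d/2-1}_{2,1}$ together with boundedness in $\dot B^{d/2}_{2,1}$ does not give relative compactness in $\dot B^{d/2}_{2,1}$. Take for instance a fixed profile rescaled to frequency $2^{s(t)}$ with $s(t)\to\infty$ as $t\to T$. By contrast, the $\tilde L^\infty_T\dot B^{d/2}_{2,1}$ bound alone already gives $\sup_{t<T}\sum_{j>m}2^{jd/2}\|\dot\Delta_ja(t)\|_{L^2}\to0$ as $m\to\infty$. That is all Step 2 uses.

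Once Step 2 has produced $(\rho,v,b)(T)$ with $\rho(T)\geq\inf\rho>0$, $a(T)\in\dot B^{d/2}_{2,1}$, $v(T),b(T)\in\dot B^{d/2-1}_{2,1}$ and $\dive b(T)=0$, you can simply apply Theorem \ref{thm:lwp} with initial time $T$ and concatenate. Continuity at $T$ makes the glued triple a solution in $E_{T^*}$, and uniqueness is not even needed. Your uniform lower bound on the lifespan over a compact family of data is true. But it does not follow from Theorem \ref{thm:lwp} as stated, which gives no information on how the existence time depends on the data, so invoking it would force you to reopen the local existence proof.
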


\section{Proof of Theorem \ref{thm:main}}
\label{proof}
In this section, we provide the proof of our first main result, namely Theorem \ref{thm:main}. We recall the statement below for reader's convenience.
\MainTheoremA*
\begin{proof}
We divide the proof in several steps.\\
\\
{\em \underline{Step 0}\quad Local well-posedness.}\\
\\
First of all, in view of the local well-posedness result in Theorem \ref{thm:lwp}, 
we can infer that there exist some $T_*>0$ and a unique maximal solution $(\rho,v,b)$ to \eqref{MHDcomp} satisfying
\begin{equation}\label{regcond}
    a:=\rho-1 \in \ C([0,T_*);\dot B_{2,1}^{d/2}(\R^d)), \quad v,b \in \ C([0,T_*);\dot B_{2,1}^{d/2-1}(\R^d))\cap L^1([0,T_*);\dot B_{2,1}^{d/2+1}(\R^d)).
\end{equation}
Thus, our goal is to derive global-in-time a priori estimates that will allow us to extend this local solution to a global one relying on the continuation criterion in Theorem \ref{thm:continuation}
To do this, we compare the solutions of \eqref{MHDcomp} and \eqref{MHDincomp}. 
Recall that \eqref{MHDincomp} admits a unique global solution when $d=2$, 
whereas in dimension $d=3$ we simply assume that the corresponding incompressible solution is global.  
With this understood, the proof proceeds in the same way for $d=2$ and $d=3$: although the constants in the estimates may depend on the dimension, the structure of the argument does not.  
The only genuine difference between the two cases is thus the global well-posedness of the limiting incompressible system. In the following, we will consider an arbitrary $T\in (0,T_*)$ and when denoting space-time norms, we will explicitly omit the time interval $(0,T)$. It is understood that all norms are computed over this interval, since the compressible solution is a priori defined only up to time $T_*$. This convention is adopted to simplify the notation.\\
\\
{\em\underline{Step 1}\quad The system for the differences.}\\
\\
Given the reference solution $(V,B,\Pi)$, we define the differences
\begin{equation}
u := v-V, \qquad h:= b-B, \qquad a:= \rho-1.
\end{equation}
Then, we rewrite the system \eqref{MHDcomp} in the unknowns $(a,u,h)$ as
\begin{equation}\label{eq:differenze}
    \begin{cases}
        a_t+\dive(au)+\dive \cq u+V\cdot\nabla a=0,\\
        u_t+u\cdot\nabla u +V\cdot\nabla u+u\cdot\nabla V-\mu\Delta u-(\lambda+\mu)\nabla \dive u+\nabla p\\
        \qquad\qquad\qquad\qquad=h\cdot\nabla h+B\cdot\nabla h+h\cdot\nabla B-\frac12\nabla|h+B|^2+\mathcal{R},\\
        h_t+ (h+B)\dive u+(u+V)\cdot\nabla h+u\cdot\nabla B =h\cdot\nabla(u+V)+B\cdot\nabla u+\eta\Delta h,\\
        \dive h=0,\\
        a|_{t=0}=a_0,\quad u|_{t=0}=v_0-V_0,\quad h|_{t=0}=0.
    \end{cases}
\end{equation}
where the remainder is given by
\begin{equation}
    \label{remainder globale}
    \mathcal{R}:=a[u_t+V_t+(u+V)\cdot\nabla u+u\cdot\nabla V],
\end{equation}
while the pressure term is defined as
\begin{equation}
    \label{pressione differenze}
    p=P-\Pi-\frac12|B|^2,
\end{equation}
and we assume, for simplicity, that $P'(1)=1$ and we use the notation
\begin{equation}
    \label{def:k}
    k(a):=P'(1+a)-P'(1)=P'(1+a)-1.
\end{equation}
For simplicity, from now on we assume that $\mu=1,\eta=1$.
We need to consider separately the divergence-free $\mathcal P u$ and the potential $\mathcal Q u$ part of the velocity field $u$. To do this, we apply the Leray projector $\mathcal{P}$ to the equation $\eqref{eq:differenze}_2$ and we get that
\begin{align}
    (\cp u)_t+\cp((u+V)\cdot\nabla\cp u)-\Delta\cp u=&-\cp(a(V_t+\cp u_t+(\cq u_t+\nabla a))-\cp R_1 \notag \\
    &+\cp(h\cdot\nabla B)+\cp(B\cdot\nabla h)+\cp(h\cdot\nabla h)
\end{align}
where the remainder term $R_1$ is defined as
\begin{align}
    R_1:&= (1+a)\cp u\cdot\nabla(V+\cq u)+(1+a)V\cdot\nabla\cq u+(1+a)\cq u\cdot\nabla V \nonumber\\
    &+a(u+V)\cdot\nabla\cp u+aV\cdot\nabla V+a\cq u\cdot\nabla\cq u.\label{def:R1}
\end{align}
Notice that to get the above expression we used the fact that $\mathcal{P}(a\nabla a)=0$. On the other hand, applying the operator $\mathcal{Q}$ to the equation $\eqref{eq:differenze}_2$ we obtain
\begin{align}
    \cq u_t+\cq((u+V)\cdot\nabla\cq u)-\nu\Delta\cq u+\nabla a
   =& -\cq(au_t+aV_t)+\cq((h+B)\cdot \nabla (h+B)) \nonumber\\
   &-\cq R_2-\frac{1}{2}\nabla(|h+B|^2),\label{3.17NEW}
\end{align}
where we used that $\nabla\dive v=\Delta(-(-\Delta)^{-1})\nabla\dive v=\Delta\mathcal{Q}v$, and the remainder term $R_2$ is defined as
\begin{equation}\label{def:R2}
R_2:= (1+a)(u+V)\cdot\nabla\cp u+(1+a)(u+V)\cdot\nabla V+a(u+V)\cdot \nabla\cq u+k(a)\nabla a
\end{equation}

From now on, we denote by $a^\ell$ and $a^h$ the low and high frequencies parts of $a$ as in \eqref{def:split frequenze}. Moreover, we define the following quantities
\begin{equation}\label{def:quantità}
\begin{aligned}
    & \mathcal{X}(\tau):=\|\cq u, a, \nu\nabla a\|_{L^{\infty}((0,\tau;\dot B^{d/2-1}_{2,1})}, \\
    & \mathcal{Y}(\tau):=\|\cq u_t+\nabla a, \nu \nabla^2\cq u, \nu\nabla^2a^\ell,\nabla a^h\|_{L^{1}((0,\tau);\dot B^{d/2-1}_{2,1})},\\
    & \mathcal{Z}(\tau):=\|\cp u\|_{L^{\infty}((0,\tau);\dot B^{d/2-1}_{2,1})}+\|h\|_{L^{\infty}((0,\tau);\dot B^{d/2-1}_{2,1})},\\
    &\mathcal{W}(\tau):=\|\cp u_t,\nabla^2 \cp u\|_{L^{1}((0,\tau);\dot B^{d/2-1}_{2,1})}+\|h_t,\nabla^2 h\|_{L^{1}((0,\tau);\dot B^{d/2-1}_{2,1})}.
\end{aligned}
\end{equation}
We omit the time dependence when the above quantities are computed over $(0,T)$, that is $\mathcal{X}(T)=\mathcal{X}$.
Now, we fix some $M\geq0$ so that the incompressible solution $(V,B)$ to $\eqref{MHDincomp}$ satisfies, for all $T\geq0$
\begin{equation}\label{bound V}
        \mathcal{V}:=\|V\|_{L^{\infty}\dot B^{d/2-1}_{2,1}}+\|V_t,\nabla^2V\|_{L^{1}\dot B^{d/2-1}_{2,1}}+\|B\|_{L^{1}\dot B^{d/2+1}_{2,1}}+\|B\|_{L^{\infty}\dot B^{d/2-1}_{2,1}}\leq M.
\end{equation}
We claim that for $\nu$ large enough, we can find some large $D$ and small $\delta$ so that for all $T<T_{*}$, the following bounds hold
\begin{equation}\label{local bounds X Y Z W}
    \mathcal{X}+\mathcal{Y}\leq D \quad \text{and}\quad \mathcal{Z}+\mathcal{W}\leq\delta.
\end{equation}
\\
{\em \underline{Step 2} \quad Estimates for the divergence-free part of the system \eqref{eq:differenze}}.\\
\\
In this step we look for estimates on $\mathcal{Z}+\mathcal{W}(T)$, hence on quantities which involve only divergence-free part of the system \eqref{eq:differenze}, which we rewrite in the following system
\begin{equation}\label{Peq}
\begin{cases}
(\cp u)_t+\cp((u+V)\cdot\nabla\cp u)-\Delta\cp u=-\cp(a(V_t+\cp u_t+(\cq u_t+\nabla a))-\cp R_1+\cp(h\cdot\nabla B)\\
\hspace{6.5cm}+\cp(B\cdot\nabla h)+\cp(h\cdot\nabla h),\\
h_t+\cp(\dive(\cq u)h)+\cp(\dive u B)+\cp(u\cdot\nabla B)+\cp(\cp u\cdot\nabla h+\cq u\cdot\nabla h)+\cp(V\cdot\nabla h)-\cp(B\cdot\nabla u)\\
\hspace{6.5cm}-\cp(h\cdot\nabla\cp u)-\cp(h\cdot\nabla\cq u)-\cp(h\cdot\nabla V)=\cp \Delta h,\\
\cp u|_{t=0}=0,\qquad h|_{t=0}=0.
\end{cases}
\end{equation}
We apply $\dot\Delta_j$ to $\eqref{Peq}_1$, we multiply by $\dot\Delta_j\cp u$ and we integrate over $\R^d$ to obtain that
\begin{align*}
\frac{1}{2}\frac{\de}{\de t} \|\dot\Delta_j\cp u\|_{L^2}^2&+\|\nabla\dot\Delta_j\cp u\|_{L^2}^2+\int_{\R^d} \dot\Delta_j((u+V)\cdot\nabla\cp u)\cdot\dot\Delta_j\cp u \de x\\
=&-\int_{\R^d} \dot\Delta_j(a(V_t+\cp u_t+(\cq u_t+\nabla a))+R_1)\cdot\dot\Delta_j\cp u \de x\\
&+\int_{\R^d} \dot\Delta_j(h\cdot\nabla B) \cdot\dot\Delta_j\cp u \de x+\int_{\R^d} \dot\Delta_j (B\cdot\nabla h)\cdot\dot\Delta_j\cp u \de x \\
&+\int_{\R^d} \dot\Delta_j(h\cdot\nabla h)\cdot\dot\Delta_j\cp u \de x,
\end{align*}
where we used that the operators $\cp$ and $\dot\Delta_j$ commute, that $\cp$ is self-adjoint in $L^2$ and that $\cp^2=\cp$.
We use the Bernstein inequality in \ref{Bernstein} to estimate the second term above as
\begin{equation}\label{stimda alto grad}
      \|\nabla\dot\Delta_j\cp u\|_{L^2}^2=\|\nabla\dot\Delta_j\cp u\|_{L^2}\|\nabla\dot\Delta_j\cp u\|_{L^2}\geq c\|\nabla^2\dot\Delta_j\cp u\|_{L^2}\|\dot\Delta_j\cp u\|_{L^2}.
\end{equation}
Then, we use the commutator $[u + V,\dot\Delta_j]$ to get (after integration by parts) that
\begin{align*}
\frac{1}{2}\frac{\de}{\de t} \|\dot\Delta_j\cp u\|_{L^2}^2+c\|\nabla^2\dot\Delta_j\cp u\|_{L^2}\|\dot\Delta_j\cp u\|_{L^2}\leq \frac{1}{2}\int_{\R^d}|\dot\Delta_j\cp u|^2 \dive u\de x +\int_{\R^d}([u+V,\dot\Delta_j]\cdot\nabla\cp u)\cdot\dot\Delta_j\cp u \de x\\
-\int_{\R^d} \dot\Delta_j(a(V_t+\cp u_t+(\cq u_t+\nabla a))+R_1)\cdot\dot\Delta_j\cp u \de x +\int_{\R^d} \dot\Delta_j(h\cdot\nabla B) \cdot\dot\Delta_j\cp u \de x\\
+\int_{\R^d} \dot\Delta_j (B\cdot\nabla h)\cdot\dot\Delta_j\cp u \de x +\int_{\R^d} \dot\Delta_j(h\cdot\nabla h)\cdot\dot\Delta_j\cp u \de x.
\end{align*}
Now, by using the embedding $\dot B_{2,1}^{d/2}(\R^d)\hookrightarrow L^\infty(\R^d)$, we have
\begin{equation*}
\int_{\R^d} |\dot\Delta_j\cp u|^2\dive u \de x\leq C \|\dot\Delta_j\cp u\|_{L^2}^2\|\nabla u\|_{\dot B_{2,1}^{d/2}}.
\end{equation*}
So, we use Holder inequality and we divide by $\|\dot\Delta_j\cp u\|_{L^2(\R^d)}$ to get the inequality
\begin{align}
\frac{\de}{\de t} \|\dot\Delta_j\cp u\|_{L^2}&+c\|\nabla^2\dot\Delta_j\cp u\|_{L^2}\lesssim \|\dot\Delta_j\cp u\|_{L^2}\|\nabla u\|_{\dot B_{2,1}^{d/2}}+\|[u+V,\dot\Delta_j]\cdot\nabla\cp u\|_{L^2}\nonumber\\
&+\|\dot\Delta_j(aV_t+\cp u_t+(\cq u_t+\nabla a))\|_{L^2} +\|R_1\|_{L^2}+\|\dot\Delta_j(h\cdot\nabla B)\|_{L^2}\nonumber\\
&+\|\dot\Delta_j(B\cdot\nabla h)\|_{L^2}+\|\dot\Delta_j(h\cdot\nabla h)\|_{L^2}.\label{est:incomp-dopo holder}
\end{align}
We use Lemma \ref{lemma commut 2.100} to bound the commutator term as
\begin{equation}
\bigl\|\bigl[u+V,\dot\Delta_j]\cdot\nabla\cp u\bigr\|_{L^2}\leq C c_j2^{-j({d/2-1})}\|\nabla(u+V)\|_{\dot B^{d/2}_{2,1}}\|\cp u\|_{\dot B^{d/2-1}_{2,1}}
\quad\text{with}\quad \sum_{j\in\Z} c_j\leq1,
\end{equation}
and then, we multiply the inequality \eqref{est:incomp-dopo holder} by $2^{j(d/2-1)} $, we integrate in time, and we sum over $j\in\mathbb{Z}$, to obtain for any $T<T_*$
\begin{equation}\label{stimaPu,grad2Pu}
\begin{aligned}
\sup_{t\in[0,T]}\|\cp u\|_{\dot B_{2,1}^{d/2-1}}&+\int_0^T\|\nabla^2\cp u\|_{\dot  B_{2,1}^{d/2-1}}\de t\lesssim
\int_0^T \|\nabla u\|_{\dot B_{2,1}^{d/2}}\|\cp u\|_{\dot B_{2,1}^{d/2-1}} \de t\\
&+ \int_0^T \|\nabla(u+V)\|_{\dot B_{2,1}^{d/2}}\|\cp u\|_{\dot B_{2,1}^{d/2-1}} \de t+\int_0^T \|a(V_t+\cp u_t+(\cq u_t+\nabla a))\|_{\dot B_{2,1}^{d/2-1}} \de t\\
&+\int_0^T \|R_1\|_{\dot B_{2,1}^{d/2-1}} \de t+\int_0^T \|h\cdot\nabla B\|_{\dot B_{2,1}^{d/2-1}} \de t+\int_0^T \|B\cdot\nabla h\|_{\dot B_{2,1}^{d/2-1}} \de t\\
&+\int_0^T \|h\cdot\nabla h\|_{\dot B_{2,1}^{d/2-1}} \de t, 
\end{aligned}
\end{equation}   
where in particular we used that $\cp u|_{t=0}=0$. Since our goal is to bound the quantity $\mathcal{W}$, we need to find an estimate for $\cp u_t$. To do this, we directly use the equation $\eqref{Peq}_1$ to get
\begin{align*}
    \int_0^T\|\cp u_t\|_{B_{2,1}^{d/2-1}}\de t\leq &  \int_0^T \|(u+V)\cdot\nabla\cp u\|_{\dot B_{2,1}^{d/2-1}}\de t+\int_0^T\|\nabla^2\cp u\|_{\dot B_{2,1}^{d/2-1}}\de t\\
    &+\int_0^T\|a(V_t+\cp u_t
    +(\cq u_t+\nabla a))\|_{\dot B_{2,1}^{d/2-1}}\de t
    +\int_0^T\| R_1\|_{\dot B_{2,1}^{d/2-1}}\de t\\
    &+\int_0^T\|h\cdot\nabla B\|_{\dot B_{2,1}^{d/2-1}}\de t
    +\int_0^T\|B\cdot\nabla h\|_{\dot B_{2,1}^{d/2-1}}\de t
    +\int_0^T\|h\cdot\nabla h\|_{\dot B_{2,1}^{d/2-1}}\de t,
\end{align*}
where we used that the operator $\cp:\dot B_{2,1}^{d/2-1}\to\dot B_{2,1}^{d/2-1}$ is bounded.
Hence, combining this with the estimate \eqref{stimaPu,grad2Pu}, we have
\begin{equation}\label{3.27New}
\begin{aligned}
\|\cp u\|_{L^\infty\dot B_{2,1}^{d/2-1}}&+\|\cp u_t,\nabla^2\cp u\|_{L^1\dot  B_{2,1}^{d/2-1} }\lesssim\int_0^T \|(u+V)\cdot\nabla\cp u\|_{\dot B_{2,1}^{d/2-1}}\de t\\
&+\int_0^T \|\nabla u\|_{\dot B_{2,1}^{d/2}}\|\cp u\|_{\dot B_{2,1}^{d/2-1}} \de t+\int_0^T \|\nabla(u+V)\|_{\dot B_{2,1}^{d/2}}\|\cp u\|_{\dot B_{2,1}^{d/2-1}} \de t\\
&+\int_0^T (\|a(V_t+\cp u_t+(\cq u_t+\nabla a))\|_{\dot B_{2,1}^{d/2-1}} +\|R_1\|_{\dot B_{2,1}^{d/2-1}} )\de t\\
&+\int_0^T (\|h\cdot\nabla B\|_{\dot B_{2,1}^{d/2-1}} +\|B\cdot\nabla h\|_{\dot B_{2,1}^{d/2-1}} + \|h\cdot\nabla h\|_{\dot B_{2,1}^{d/2-1}}) \de t. 
\end{aligned}
\end{equation}
We now estimate the terms on the right-hand side of \eqref{3.27New}, we denote them by $(I),...,(V)$. For the terms $(I)-(IV)$ we proceed as in \cite{danchin2016compressiblenavierstokeslarge}. For the reader's convenience, we recall below the corresponding estimates.
From now on, we assume that
\begin{equation}\label{3.29}
    \mathcal{X}\ll \nu.
\end{equation}
To bound $(I)$, we use Lemma \eqref{lemmastimaprod}, the interpolation inequality \eqref{interpolation} and Young's inequality, and we get
\begin{equation*}
\begin{aligned}
     (I)=\int_0^T\|(u+V)\cdot\nabla\cp u\|_{\dot B_{2,1}^{d/2-1}}\de t\lesssim&
    \int_0^T\left(\|\nabla\cp u\|_{\dot B_{2,1}^{d/2}}+\|\cq u,V\|^2_{\dot B_{2,1}^{d/2}} \right)\mathcal{Z}(t)\de t+\varepsilon \mathcal{W}.    
\end{aligned}
\end{equation*}
We split $u$ into its compressible and incompressible parts, and using again Lemma \eqref{lemmastimaprod} we have
\begin{equation*}
    \begin{aligned}
    (II)+
    (III)&\leq \int_0^T (\|\nabla u\|_{\dot B_{2,1}^{d/2}}+\|\nabla V\|_{\dot B_{2,1}^{d/2}})\|\cp u\|_{\dot B_{2,1}^{d/2-1}} \de t\lesssim
    \int_0^T\|(\nabla V,\nabla\cp u,\nabla\cq u)\|_{\dot B_{2,1}^{d/2}}\mathcal{Z}(t)\de t,\\
    (IV)&=\int_0^T\|a(V_t+\cp u_t+(\cq u_t+\nabla a))\|_{\dot B_{2,1}^{d/2-1}}\de t
    \lesssim \nu^{-1}(\mathcal{Y}+\mathcal{W}+\mathcal{V}\mathcal{X}).
    \end{aligned}
\end{equation*}
Finally, for the remainder term we split it as
\begin{equation*}
\begin{aligned}
R_1&=(1+a)\cp u\cdot\nabla(V+\cq u)+(1+a)(V\cdot\nabla \cq u+\cq u\cdot\nabla V )+a(u+V)\cdot\nabla\cp u\\
&+(a(V\cdot\nabla V+\cq u\cdot\nabla \cq u )=R_1^{(1)}+R_1^{(2)}+R_1^{(3)}+R_1^{(4)},
\end{aligned}
\end{equation*}
and estimating separately the four terms above, one obtains
\begin{equation*}
    \int_0^T\|R_1\|_{\dot B^{\frac{d}{2}-1}_{2,1}}\de t\lesssim\int_0^T \|(\nabla V,\nabla \cq u)\|_{\dot B_{2,1}^{d/2} }\mathcal{Z}(t)\de t+\nu^{-1/2}\mathcal{X}^{1/2}\mathcal{Y}^{1/2}\mathcal{V}(T)+\nu^{-1}\mathcal{X}(\mathcal{V}^2+\nu^{-1}\mathcal{X}\mathcal{Y}).
\end{equation*}
Lastly, to bound the term $(V)$, we apply Lemma \ref{lemmastimaprod} and we get
\begin{equation*}
    (V)\lesssim \int_0^T (\|\nabla h\|_{\dot B_{2,1}^{d/2}}+\|B\|_{\dot B_{2,1}^{d/2+1} })\mathcal{Z}(t)\de t
\end{equation*}
Therefore, we have obtained
\begin{equation}\label{stima Pu}
    \begin{aligned}
        \|\cp u\|_{L^\infty\dot B_{2,1}^{d/2-1}}&+\|\cp u_t,\nabla^2\cp u\|_{L^1\dot  B_{2,1}^{d/2-1} }\\
        \lesssim&\int_0^T \biggl( \|\nabla V,\nabla\cp u, \nabla\cq u,\nabla h\|_{\dot B_{2,1}^{d/2}}+\|\cq u,V\|_{\dot B_{2,1}^{d/2}}^2
        +\|B\|_{\dot B_{2,1}^{d/2+1} }\biggr) \mathcal{Z}(t)\de t\\
        &+\nu^{-1} (\mathcal{Y}+\mathcal{W}+\mathcal{V}\mathcal{X}+\nu^{-1/2}\mathcal{X}^{1/2}\mathcal{Y}^{1/2}\mathcal{V}(1+\nu^{-1}\mathcal{X})\\
        &+\nu^{-1}\mathcal{X}\bigl(\mathcal{Z}+\mathcal{X}+\mathcal{V}\bigr)\mathcal{W}
        +\nu^{-1}\mathcal{X}(\mathcal{V}^2+\nu^{-1}\mathcal{X}\mathcal{Y}).
    \end{aligned}
\end{equation}
We now derive analogous estimates for the magnetic field $h$, in the norms appearing in $\mathcal{Z}$ and $\mathcal{W}$. We apply the operator $\dot\Delta_j$ to the equation $\eqref{Peq}_2$, we multiply for $\dot\Delta_j h$ and we integrate in space to get that
\begin{align*}
\frac{1}{2}\frac{\de}{\de t}\|\dot\Delta_j h\|_{L^2(\R^d)}^2
&+\int_{\R^d} \dot\Delta_j (\dive(\cq u)h) \cdot \dot\Delta_j h\de x =-\int_{\R^d} \dot\Delta_j (\dive u B)\cdot \dot\Delta_j h\de x\\
& -\int_{\R^d} \dot\Delta_j (u\cdot\nabla B)\cdot \dot\Delta_j h\de x - \int_{\R^d} \dot\Delta_j (\cp u\cdot\nabla h+\cq u\cdot\nabla h) \cdot \dot\Delta_j h\de x\\
& -\int_{\R^d} \dot\Delta_j(V\cdot\nabla h)\cdot \dot\Delta_j h\de x + \int_{\R^d} \dot\Delta_j(B\cdot\nabla u)\cdot \dot\Delta_j h\de x + \int_{\R^d} \dot\Delta_j(h\cdot\nabla \cp u)\cdot \dot\Delta_j h\de x\\
&+\int_{\R^d} \dot\Delta_j(h\cdot\nabla \cq u)\cdot \dot\Delta_j h\de x +\int_{\R^d} \dot\Delta_j(h\cdot\nabla V)\cdot \dot\Delta_j h\de x +\int_{\R^d}  \dive(\nabla\dot\Delta_j h)\cdot \dot\Delta_j h\de x.
\end{align*}
We integrate by parts the term involving $\dive(\nabla \dot\Delta_j h)$, we use Holder and Bernstein inequalities, and we divide by $\|\dot\Delta_j h\|_{L^2}$ (proceeding similar to \eqref{stimda alto grad}), to get
\begin{align*}
\frac{\de}{\de t}\|\dot\Delta_j h\|_{L^2}+c\|\nabla^2\dot\Delta_j h\|_{L^2}&\leq \|\dot\Delta_j (\dive(\cq u)h)\|_{L^2} +\|\dot\Delta_j (\dive u B)\|_{L^2}+\|\dot\Delta_j (u\cdot\nabla B)\|_{L^2}\\
& +\|\dot\Delta_j (\cp u\cdot\nabla h+\cq u\cdot\nabla h)\|_{L^2}
+\|\dot\Delta_j(V\cdot\nabla h)\|_{L^2}+\|\dot\Delta_j(B\cdot\nabla u)\|_{L^2}\\
&+\| \dot\Delta_j(h\cdot\nabla \cp u)\|_{L^2}+\|\dot\Delta_j(h\cdot\nabla \cq u)\|_{L^2}+\|\dot\Delta_j(h\cdot\nabla V)\|_{L^2}.
\end{align*}
We multiply by $2^{j(d/2-1)}$, we sum over $j$ and we integrate in time so that for all $T<T_*$
\begin{equation}\label{stima1h}
\begin{aligned}
\|h\|_{L^\infty\dot B_{2,1}^{d/2-1} }&+\int_0^T\|\nabla^2 h\|_{\dot B_{2,1}^{d/2-1} }\leq \int_0^T \|\dive(\cq u) h\|_{\dot B_{2,1}^{d/2-1}} \de t+\int_0^T\|\dive u B\|_{\dot B_{2,1}^{d/2-1}} \de t\\
&+\int_0^T\| u\cdot\nabla B\|_{\dot B_{2,1}^{d/2-1}} \de t+\int_0^T\|\cp u\cdot\nabla h\|_{\dot B_{2,1}^{d/2-1}} \de t+\int_0^T\| \cq u\cdot\nabla h\|_{\dot B_{2,1}^{d/2-1}} \de t\\
&+\int_0^T\| V\cdot\nabla h\|_{\dot B_{2,1}^{d/2-1}} \de t+\int_0^T\| B\cdot\nabla u\|_{\dot B_{2,1}^{d/2-1}} \de t+\int_0^T\| h\cdot\nabla \cp u\|_{\dot B_{2,1}^{d/2-1}} \de t\\
&+\int_0^T\| h\cdot\nabla \cq u\|_{\dot B_{2,1}^{d/2-1}} \de t+\int_0^T\| h\cdot\nabla V\|_{\dot B_{2,1}^{d/2-1}} \de t.
\end{aligned}
\end{equation}
where we used that $h_0=0$. Lastly, we need an estimate for $\|h_t\|_{L^1\dot B_{2,1}^{d/2-1}}$. To do this, we directly use the equation $\eqref{Peq}_2$, and combine it with $\eqref{stima1h}$ to obtain:
\begin{equation}\label{stima2h}
\begin{aligned}
\|h\|_{L^\infty\dot B_{2,1}^{d/2-1} }&+\|\nabla^2 h,h_t\|_{L^1\dot B_{2,1}^{d/2-1} }\leq \int_0^T \|\dive(\cq u) h\|_{\dot B_{2,1}^{d/2-1}} \de t +\int_0^T\|\dive u B\|_{\dot B_{2,1}^{d/2-1}} \de t\\
&+\int_0^T\| u\cdot\nabla B\|_{\dot B_{2,1}^{d/2-1}} \de t+\int_0^T\|\cp u\cdot\nabla h\|_{\dot B_{2,1}^{d/2-1}} \de t+\int_0^T\| \cq u\cdot\nabla h\|_{\dot B_{2,1}^{d/2-1}} \de t\\
&+\int_0^T\| V\cdot\nabla h\|_{\dot B_{2,1}^{d/2-1}} \de t+\int_0^T\| B\cdot\nabla u\|_{\dot B_{2,1}^{d/2-1}} \de t+\int_0^T\| h\cdot\nabla \cp u\|_{\dot B_{2,1}^{d/2-1}} \de t\\
&+\int_0^T\| h\cdot\nabla \cq u\|_{\dot B_{2,1}^{d/2-1}} \de t+\int_0^T\| h\cdot\nabla V\|_{\dot B_{2,1}^{d/2-1}} \de t.
\end{aligned}
\end{equation}
We now estimate the terms on the right-hand side of \eqref{stima2h}. To avoid further notation, we denote them by $(I),...,(X)$. The terms $(I)-(IV)$ are handled using Lemma \ref{lemmastimaprod} and Lemma \ref{lem:bernstein} as follows
\begin{align*}
(I)=&\int_0^T \|\dive(\cq u) h\|_{\dot B_{2,1}^{d/2-1}} \de t\lesssim\int_0^T \|\nabla\cq u \|_{\dot B_{2,1}^{d/2}} \| h\|_{\dot B_{2,1}^{d/2-1}}\de t\lesssim\int_0^T \|\nabla\cq u \|_{\dot B_{2,1}^{d/2}} \mathcal{Z}(t) \de t,\\
(II)=&\int_0^T\|\dive u B\|_{\dot B_{2,1}^{d/2-1}} \de t \lesssim\int_0^T \|\nabla^2\cq u \|_{\dot B_{2,1}^{d/2-1}} \| B\|_{\dot B_{2,1}^{d/2-1}}\de t\leq \nu^{-1}\mathcal{Y}\mathcal{V}(T),\\
(III)=&\int_0^T\| u\cdot\nabla B\|_{\dot B_{2,1}^{d/2-1}} \de t\lesssim\int_0^T\left(\|\cp u\cdot\nabla B\|_{\dot B_{2,1}^{d/2-1}}+\|\cq u\cdot\nabla B\|_{\dot B_{2,1}^{d/2-1}}\right) \de t\\
&\lesssim \int_0^T \|\nabla B\|_{\dot B_{2,1}^{d/2}} \mathcal{Z}(t) \de t+\nu^{-1}\| B\|_{L^\infty \dot B_{2,1}^{d/2-1}}  \|\nu \nabla^2\cq u\|_{L^1\dot B_{2,1}^{d/2-1}}  \\
&\leq \int_0^T \|B\|_{\dot B_{2,1}^{d/2+1}} \mathcal{Z}(t) \de t+\nu^{-1} \mathcal{V}\mathcal{Y},\\
(IV)=&\int_0^T\| \cp u\cdot\nabla h\|_{\dot B_{2,1}^{d/2-1}} \de t\lesssim
\int_0^T\| \cp u\|_{\dot B_{2,1}^{d/2-1}} \|\nabla h\|_{\dot B_{2,1}^{d/2}} \de t\leq \int_0^T \|\nabla h\|_{\dot B_{2,1}^{d/2}} \mathcal{Z}(t) \de t.
\end{align*}
For the fifth term $(V)$, we use the interpolation inequality \eqref{interpolation} and Young's inequality to get
\begin{align*}
(V)
&\lesssim\int_0^T\| \cq u\|_{\dot B_{2,1}^{d/2}}\| h\|_{\dot B_{2,1}^{d/2}} \de t\lesssim\int_0^T\| \cq u\|_{\dot B_{2,1}^{d/2}}\|h\|_{\dot B_{2,1}^{d/2-1}}^{1/2}\|\nabla^2 h\|_{\dot B_{2,1}^{d/2-1}}^{1/2} \de t\\
&\lesssim \varepsilon \|\nabla^2 h\|_{L^1\dot B_{2,1}^{d/2-1} }+\frac{1}{\varepsilon}\int_0^T\| \cq u\|_{\dot B_{2,1}^{d/2}}^2 \mathcal{Z}(t)\de t,
\end{align*}
Notice that, we can absorb the first term above into the left-hand side of \eqref{stima2h}. Arguing the same way, we also get that
\begin{align*}
(VI)=&\int_0^T \|V\cdot\nabla h\|_{\dot B_{2,1}^{d/2-1} }\de t \lesssim\varepsilon\|\nabla^2 h\|_{\dot B_{2,1}^{d/2-1}}+\frac{1}{\varepsilon}\int_0^T \|V\|_{\dot B_{2,1}^{d/2} }^2\|h\|_{\dot B_{2,1}^{d/2-1}}\de t.
\end{align*}
For the seventh term $(VII)$, we separate $u$ into its compressible and incompressible parts.
\begin{equation*}
\int_0^T \|B\cdot\nabla u\|_{\dot B_{2,1}^{d/2-1}}\de t\leq\int_0^T \|B\cdot\nabla \cp u\|_{\dot B_{2,1}^{d/2-1}}\de t+\int_0^T \|B\cdot\nabla \cq u\|_{\dot B_{2,1}^{d/2-1}}\de t,
\end{equation*}
and we estimate the two terms separately as follows
\begin{align*}
\int_0^T \|B\cdot\nabla \cp u\|_{\dot B_{2,1}^{d/2-1}}\de t\lesssim& 
\int_0^T \|B\|_{\dot B_{2,1}^{d/2+1}}\mathcal{Z}(t)\de t,\\
\int_0^T \|B\cdot\nabla \cq u\|_{\dot B_{2,1}^{d/2-1}}\de t\lesssim&  
\int_0^T \|B\|_{\dot B_{2,1}^{d/2-1}}\|\nabla^2\cq u\|_{\dot B_{2,1}^{d/2-1}}\de t
\leq \nu^{-1} \mathcal{V}\mathcal{Y}.
\end{align*}
Finally, to estimate the terms $(XIII)-(X)$ in $\eqref{stima2h}$, we use again Lemma \ref{lemmastimaprod} and Lemma \ref{lem:bernstein} to get  
\begin{align*}
(XIII)=&\int_0^T \|h\cdot\nabla\cp u\|_{\dot B_{2,1}^{d/2-1}}\de t\lesssim\int_0^T \|\nabla\cp u\|_{\dot B_{2,1}^{d/2}}\mathcal{Z}(t) \de t, \\
(IX)=&\int_0^T \|h\cdot\nabla\cq u\|_{\dot B_{2,1}^{d/2-1}}\de t\lesssim \int_0^T \|\nabla\cq u\|_{\dot B_{2,1}^{d/2}}\mathcal{Z}(t) \de t,\\
(X)=&\int_0^T \|h\cdot\nabla V\|_{\dot B_{2,1}^{d/2-1}}\de t\lesssim\int_0^T \|\nabla V\|_{\dot B_{2,1}^{d/2}} \mathcal{Z}(t) \de t.
\end{align*}
Therefore, collecting all the estimates above, we finally obtain 
\begin{align}
\|h\|_{L^\infty\dot B_{2,1}^{d/2-1}}+\|h_t,\nabla^2 h\|_{\dot B_{2,1}^{d/2-1}}\lesssim&\int_0^T (\|\nabla V,\nabla\cp u,\nabla\cq u,\nabla h\|_{\dot B_{2,1}^{d/2}}+\|\cq u,V\|^2_{\dot B_{2,1}^{d/2}}+\|B\|_{\dot B_{2,1}^{d/2+1}}) \mathcal{Z}(t) \de t\nonumber\\
&+\nu^{-1} \mathcal{Y}\mathcal{V}.\label{stima h}
\end{align}

Lastly, by the definition of the energies $\mathcal{Z}$ and $\mathcal{W}$ and collecting the estimates in \eqref{stima Pu} and \eqref{stima h} we get the bound
\begin{equation*}
    \begin{aligned}
        \mathcal{Z}+\mathcal{W}\lesssim& 
        \int_0^T (\|\nabla V,\nabla\cp u,\nabla\cq u,\nabla h\|_{\dot B_{2,1}^{d/2}}+\|\cq u,V\|^2_{\dot B_{2,1}^{d/2}}+\|B\|_{\dot B_{2,1}^{d/2+1}}) \mathcal{Z}(t) \de t\\
        &+\nu^{-1}(\mathcal{Y}+\mathcal{W}+\mathcal{V})\mathcal{X}+\nu^{-1/2}\mathcal{X}^{1/2}\mathcal{V}\mathcal{Y}^{1/2}(1+\nu^{-1}\mathcal{X})\\
        &+\nu^{-1}\mathcal{X}(\mathcal{Z}+\mathcal{X}+\mathcal{V})\mathcal{W}+\nu^{-1}\mathcal{X}(\mathcal{V}^2+\nu^{-1}\mathcal{X}\mathcal{Y})+\nu^{-1}\mathcal{V}\mathcal{Y}.
    \end{aligned}
\end{equation*}
Then, we use \eqref{3.29} 
and Gronwall lemma to conclude that
\begin{equation}\label{local est Z+W}
    \begin{aligned}
        \mathcal{Z}+&\mathcal{W}\leq Ce^{C\left(\|\nabla V,\nabla\cp u,\nabla\cq u,\nabla h\|_{L^1\dot B_{2,1}^{d/2}}+\|\cq u,V\|^2_{L^1\dot B_{2,1}^{d/2}}+\|B\|_{L^1\dot B_{2,1}^{d/2+1}}\right) }\\
        &\times(\nu^{-1}(\mathcal{Y}+\mathcal{W}+\mathcal{V})\mathcal{X}+\nu^{-1/2}\mathcal{X}^{1/2}\mathcal{V}\mathcal{Y}^{1/2}+\nu^{-1}\mathcal{X}(\mathcal{Z}+\mathcal{X}+\mathcal{V})\mathcal{W}+\nu^{-1}\mathcal{X}\mathcal{V}^2+\nu^{-1}\mathcal{V}\mathcal{Y}).
    \end{aligned}
\end{equation}
\\
{\em \underline{Step 3} \quad Estimates on the potential part of the velocity and on the density}.\\
\\
In order to estimate the potential component of the velocity, we consider the momentum and continuity equations together and couple their energy estimates so as to take advantage of specific cancellations.
First, we apply the projector $\dot\Delta_j$ to equation $\eqref{eq:differenze}_1$ and we get that
\begin{equation}
a_{j,t}+(u+V)\cdot\nabla a_j+\dive \cq u_j=g_j, \label{3.31}
\end{equation}
where we have defined 
\begin{align}
   & a_j:=\dot\Delta_j a, \quad \cq u_j:=\dot\Delta_j\cq u,\\
   & g_j:=-\dot\Delta_j(a\dive \cq u)-[\dot\Delta_j,(u+V)]\cdot\nabla a.\label{def:gj}
\end{align}
Secondly, we apply the operator $\dot\Delta_j$ to \eqref{3.17NEW} and we obtain
\begin{equation}
\cq u_{j,t}+\cq((u+V)\cdot\nabla\cq u_j)-\nu\Delta\cq u_j+\nabla a_j-\dot\Delta_j(\cq((B+h)\cdot\nabla(B+h))+\frac{1}{2}\dot\Delta_j(\nabla(|B+h|^2)=f_j, \label{3.32New}
\end{equation}
where
\begin{equation}
    f_j:=-\dot\Delta_j\cq(aV_t+au_t)-\dot\Delta_j\cq R_2-\cq[\dot\Delta_j,u+V]\cdot\nabla\cq u.\label{def:fj}
\end{equation}
Now, we multiply the equations \eqref{3.31} and \eqref{3.32New} by $a_j$ and $\cq u_j$, respectively, and we integrate in space to deduce
\begin{equation}\label{3.36}
    \frac{1}{2}\frac{\de}{\de t}\int_{\R^d} a_j^2 \de x+\int_{\R^d} a_j \dive\cq u_j \de x=\frac{1}{2}\int_{\R^d} \dive u \ a_j^2 \de x+\int_{\R^d} g_j a_j\de x,
\end{equation}
and
\begin{equation}\label{3.37New}
\begin{aligned}
\frac{1}{2}\frac{\de}{\de t}\int_{\R^d} |\cq u_j|^2\de x+&\nu\int_{\R^d} |\nabla \cq u_j|^2\de x-\int_{\R^d} a_j\dive \cq u_j\de x-\int_{\R^d} \dot\Delta_j((h+B)\cdot\nabla(h+B))\cdot\cq u_j\de x\\
&+\frac{1}{2}\int_{\R^d} \dot\Delta_j\nabla(|B+h|^2)\cdot\cq u_j\de x=\frac{1}{2}\int_{\R^d} \dive u |\cq u_j|^2\de x+\int_{\R^d} f_j \cdot\cq u_j\de x.
\end{aligned}
\end{equation}
Now, we look for an estimate on $\nabla a_j$: we differentiate \eqref{3.31} and we get the equation
\begin{equation}\label{3.38}
    \nabla a_{j,t}+(u+V)\cdot\nabla\nabla a_j+\nabla\dive\cq u_j=\nabla g_j-\nabla(u+V)\cdot\nabla a_j.
\end{equation}
We multiply \eqref{3.38} by $\nabla a_j$ and we integrate in space to get
\begin{equation}\label{3.39}
\begin{aligned}
\frac{1}{2}\frac{\de}{\de t}\int_{\R^d} |\nabla a_j|^2\de x+\int_{\R^d} ((u+V)\cdot\nabla\nabla a_j)\cdot\nabla a_j \de x+\int_{\R^d}\nabla\dive \cq u_j\cdot\nabla a_j\de x\\
=\int_{\R^d}(\nabla g_j-\nabla(u+V)\cdot\nabla a_j)\cdot\nabla a_j\de x.
\end{aligned}
\end{equation}
Our goal now is to look for an estimate on the mixed term $\int_{\R^d} \cq u_j\cdot\nabla a_j\de x$, which will allow us to eliminate the term $\nabla\dive\cq u_j$ from \eqref{3.39}. To do this, we test the equation \eqref{3.38} with $\cq u_j$ 
and equation \eqref{3.32New} with $\nabla a_j$,
we sum them and we use the relation $\Delta\cq u_j=\nabla\dive\cq u_j$ to get

\begin{align}
\frac{\de}{\de t}\int_{\R^d} \cq u_j&\cdot\nabla a_j\de x-\int_{\R^d} \cq u_j\cdot\nabla a_j\dive u \de x-\nu\int_{\R^d}\nabla\dive\cq u_j\cdot\nabla a_j\de x+\int_{\R^d} |\nabla a_j|^2\de x\nonumber\\
&+\int_{\R^d} \nabla\dive \cq u_j\cdot\cq u_j\de x-\int_{\R^d}\dot\Delta_j(\cq((B+h)\cdot\nabla(B+h))\cdot\nabla a_j\de x\label{3.40New}\\
&+\frac{1}{2}\int_{\R^d}\dot\Delta_j(\nabla(|B+h|^2)\cdot\nabla a_j\de x=\int_{\R^d} (\nabla g_j-\nabla(u+V)\cdot\nabla a_j)\cdot\cq u_j\de x+\int_{\R^d} f_j\cdot\nabla a_j\de x.\nonumber
\end{align}
We point out that here we have used the identity
\begin{equation}
    \int_{\R^d} (u+V)\cdot\nabla(\cq u_j\cdot\nabla a_j)\de x=-\int_{\R^d} \cq u_j\cdot\nabla a_j\dive u\de x,
\end{equation}
which follows from an integration by parts.
Then, if we multiply the equation \eqref{3.39} by $\nu$ and we sum it with \eqref{3.40New}, 
we can remove the highest-order contributions and we obtain
\begin{align}
\frac{1}{2}\frac{\de}{\de t}\int_{\R^d} &(\nu|\nabla a_j|^2+2\cq u_j\cdot\nabla a_j)\de x+\int_{\R^d} (|\nabla a_j|^2-|\nabla\cq u_j|^2)\de x+\frac{1}{2}\int_{\R^d} \dot\Delta_j(\nabla(|B+h|^2)\cdot\nabla a_j\de x\nonumber\\
&-\int_{\R^d} \dot\Delta_j(\cq (B+h)\cdot\nabla(B+h))\cdot\nabla a_j\de x=\int_{\R^d} \left(\frac{\nu}{2}|\nabla a_j|^2+\cq u_j\cdot\nabla a_j\right)\dive u\de x\label{no higher terms}\\
&+\nu\int_{\R^d} (\nabla g_j-\nabla(u+V)\cdot\nabla a_j)\cdot\nabla a_j\de x+\int_{\R^d} (\nabla g_j-\nabla(u+V)\cdot\nabla a_j)\cdot\cq u_j\de x+\int_{\R^d} f_j\cdot\nabla a_j\de x.\nonumber
\end{align}
Recall that our goal is to control the norms appearing in the functional $\mathcal{X}$. To this end, we need suitable estimates for the triple $(\cq u_j,a_j,\nu\nabla a_j)$. We therefore take 
$$
2\cdot\eqref{3.36}+2\cdot\eqref{3.37New}+\nu\cdot\eqref{no higher terms},
$$
obtaining
\begin{align}
   \frac{1}{2}\frac{\de}{\de t}\int_{\R^d}(|\nu\nabla a_j|^2+&2\nu\cq u_j\cdot\nabla a_j+2a_j^2+2|\cq u_j|^2)\de x+\nu\int_{\R^d} (|\nabla a_j|^2+|\nabla\cq u_j|^2)\de x\nonumber\\
   &-2\int_{\R^d} \dot\Delta_j((h+B)\cdot\nabla(h+B))\cdot\cq u_j\de x+\int_{\R^d} \dot\Delta_j\nabla(|B+h|^2)\cdot\cq u_j\de x\nonumber\\
   &-\nu\int_{\R^d}  \dot\Delta_j(\cq((B+h)\cdot\nabla(B+h))\cdot\nabla a_j\de x+\frac{\nu}{2}\int_{\R^d}  \dot\Delta_j\nabla(|B+h|^2)\cdot\nabla a_j \de x\nonumber\\
   =&\int_{\R^d} (2 g_j a_j+2f_j\cdot\cq u_j+\nu^2\nabla g_j\cdot\nabla a_j+\nu \nabla g_j\cdot\cq u_j+\nu f_j\cdot \nabla a_j)\de x\nonumber\\
   &+\frac{1}{2}\int_{\R^d} (2a_j^2+2|\cq u_j|^2+2\nu \cq u_j\cdot\nabla a_j+|\nu \nabla a_j|^2)\dive u\de x\nonumber\\
   &-\nu \int_{\R^d} (\nabla(u+V)\cdot\nabla a_j)\cdot(\nu\nabla a_j+\cq u_j)\de x.\label{lunghissima}
\end{align}
In order to simplify the presentation, we define the quantity
\begin{equation}\label{def L_j^2}
    \mathscr{L}_j^2 :=\int_{\R^d} (2a_j^2+2|\cq u_j|^2+2\nu\cq u_j\cdot\nabla a_j+|\nu\nabla a_j|^2)\de x,
\end{equation}
and observe that $\forall j\in \mathbb{Z}$ one has
\begin{equation}\label{3.42}
    \mathscr{L}_j \approx \|(\cq u_j, a_j, \nu \nabla a_j)\|_{L^2},
\end{equation}
and
\begin{equation}\label{stima dal basso Lj}
    \nu \int_{\R^d} (|\nabla\cq u_j|^2+|\nabla a_j|^2)\de x\geq c \text{ min}(\nu 2^{2j},\nu^{-1})\mathscr{L}_j^2.
\end{equation}
Thus, by using the definition \eqref{def L_j^2}, Holder inequality and the lower bound \eqref{stima dal basso Lj}, we can rewrite \eqref{lunghissima} as
\begin{align*}
    \frac{1}{2}\frac{\de}{\de t}&\mathscr{L}_j^2+c\text{ min}(\nu 2^{2j},\nu^{-1})\mathscr{L}_j^2
    \leq \left(\frac{1}{2}\|\dive u\|_{L^\infty}+c\|\nabla(u+V)\|_{L^\infty}\right)\mathscr{L}_j^2+c\|[g_j,f_j,\nu\nabla g_j]\|_{L^2}\mathscr{L}_j\\
    &+\left(2\|\dot\Delta_j((h+B)\cdot\nabla(h+B))\|_{L^2}+\|\dot\Delta_j\nabla(|B+h|^2)\|_{L^2}\right)\left(\|\cq u_j\|_{L^2}+\frac{\nu}{2}\|\nabla a_j\|_{L^2}\right)\\
    &\leq c\|(\nabla u,\nabla V)\|_{L^\infty} \mathscr{L}_j^2+\left(c\|[g_j,f_j,\nu\nabla g_j]\|_{L^2}+2\|\dot\Delta_j((h+B)\cdot\nabla(h+B))\|_{L^2}+\|\dot\Delta_j\nabla(|B+h|^2)\|_{L^2}\right)\mathscr{L}_j.
\end{align*}
Dividing by $\mathscr{L}_j$ and integrating in time, we finally get
\begin{equation}\label{3.44New}
\begin{aligned}
\mathscr{L}_j(t)+c\text{ min}(\nu 2^{2j},\nu^{-1})\int_0^t \mathscr{L}_j\de \tau&\leq \mathscr{L}_j(0)+c\int_0^t \|\nabla(u+V)\|_{L^\infty}\mathscr{L}_j\de \tau+\int_0^t \|\dot\Delta_j((h+B)\cdot\nabla(h+B)\|_{L^2}\de \tau\\
&+\int_0^t \left(\|\dot\Delta_j\nabla(|B+h|^2)\|_{L^2}+\|[g_j,f_j,\nu\nabla g_j]\|_{L^2}\right)\de \tau
\end{aligned}
\end{equation}
We emphasize that, for high frequencies $j$, the relation $\min(\nu^{2j}, \nu^{-1}) = \nu^{-1}$ leads to a loss of parabolic smoothing for $\cq u$ in the above estimate. This difficulty can be overcome by returning to \eqref{3.37New} and performing an integration by parts in the term $a_j\,\dive \cq u_j$. Using Hölder’s and Bernstein’s inequalities then we get
\begin{align*}
    \frac{1}{2}\frac{\de}{\de t}\|\cq u_j\|_{L^2}^2+c\nu 2^{2j}&\|\cq u_j\|_{L^2}^2\leq \|\nabla a_j\|_{L^2}\|\cq u_j\|_{L^2}+\|\dot\Delta_j((h+B)\cdot\nabla(h+B))\|_{L^2}\|\cq u_j\|_{L^2}\\
    &+\frac{1}{2}\|\dot\Delta_j\nabla(|B+h|^2)\|_{L^2}\|\cq u_j\|_{L^2}+\frac{1}{2}\|\dive u\|_{L^\infty}\|\cq u_j\|_{L^2}^2+\|f_j\|_{L^2}\|\cq u_j\|_{L^2}.
\end{align*}
Therefore, we divide by $\|\cq u_j\|_{L^2}$, we integrate in time,
and we recover the Besov norm as usual to get
\begin{equation}\label{est cq u, nabla cq u}
    \begin{aligned}
         \| \cq u\|_{\dot B_{2,1}^{d/2-1}}+\nu \int_0^t \|\nabla\cq u\|_{\dot B_{2,1}^{d/2}}\de\tau\lesssim & \|\cq u(0)\|_{\dot B_{2,1}^{d/2-1} }+\nu^{-1}\int_0^t \|\nu\nabla a\|_{\dot B_{2,1}^{d/2-1}}\de\tau \\
    &+\int_0^t(\|(h+B)\cdot\nabla(h+B)\|_{\dot B_{2,1}^{d/2-1}}+\|\nabla(|h+B|^2)\|_{\dot B_{2,1}^{d/2-1}})\de\tau\\
    &+\int_0^t \|\nabla u\|_{L^\infty}\|\cq u\|_{\dot B_{2,1}^{d/2-1}}\de\tau+\int_0^t \sum_{j\in\mathbb{Z}} 2^{j(d/2-1)}\|f_j\|_{L^{2}}\de\tau,
    \end{aligned}
\end{equation}
where we have used Bernstein inequality in Lemma \ref{lem:bernstein} to bound from below the second term on the left-hand side as follows
\begin{equation*}
    c\nu\int_0^t\sum_{j\in\mathbb{Z}}2^{2j}2^{j(d/2-1)}\|\cq u_j\|_{L^2}\de\tau=c\nu\int_0^t \|\cq u\|_{\dot B_{2,1}^{d/2+1} }\de\tau\gtrsim \nu\int_0^t\|\nabla\cq u\|_{\dot B_{2,1}^{d/2}}\de\tau.
\end{equation*}
We now need to estimate separately the high- and low-frequency parts of $a$, as defined in \eqref{def:split frequenze}. First, we consider the low-frequency part $a^\ell$ and we get
\begin{align*}
    \nu \|\nabla a^\ell\|_{\dot B_{2,1}^{d/2}}&=\nu\sum_{2^j\nu\leq1 }2^{jd/2 }\|\dot\Delta_j \nabla a\|_{L^{2}}=\sum_{2^j\nu\leq1 } \nu 2^j2^{j(d/2-1) }\|\dot\Delta_j \nabla a\|_{L^{2}}\leq\sum_{j\in \mathbb{Z} }2^{j(d/2-1) }\|\nabla a_j\|_{L^2},
\end{align*}
where in the last inequality we used that we sum over frequencies $j$ such that $\nu 2^j\leq 1$.
On the other hand, for the high-frequency part $a^h$ we have
\begin{align*}
    \|a^h\|_{\dot B_{2,1}^{d/2} }&=\sum_{2^j\nu>1 }2^{jd/2 }\|\dot\Delta_j  a\|_{L^{2}}\overset{\eqref{Reverse Bernstein} }{\lesssim}\sum_{2^j\nu>1 }2^{jd/2 }2^{-j}\|\dot\Delta_j\nabla a\|_{L^{2}}.
\end{align*}
Then, we combine \eqref{3.44New}, \eqref{3.42} and \eqref{est cq u, nabla cq u} to obtain the estimate
\begin{align}
    \|(a,\nu\nabla a,\cq u)(t)\|_{\dot B_{2,1}^{d/2-1} }&+\nu\int_0^t \|\nabla a^\ell\|_{\dot B_{2,1}^{d/2} }\de\tau+\int_0^t \|\nabla a^h\|_{\dot B_{2,1}^{d/2} }\de\tau
    +\nu\int_0^t \|\nabla \cq u\|_{\dot B_{2,1}^{d/2} }\de\tau\nonumber\\
    \lesssim & \|(a,\nu\nabla a,\cq u)(t)\|_{\dot B_{2,1}^{d/2-1}}+\nu^{-1}\int_0^t \sum_{j\in\mathbb{Z}}2^{j(d/2-1)}\|\nu\nabla a_j\|_{L^2}\de\tau+\int_0^t \nu\|\nabla\cq u\|_{\dot B_{2,1}^{d/2} }\de\tau\nonumber\\
    &=\|(a,\nu\nabla a,\cq u)(t)\|_{\dot B_{2,1}^{d/2-1}}+\nu^{-1}\int_0^t\|\nu\nabla a\|_{\dot B_{2,1}^{d/2-1}}\de\tau+\nu\int_0^t \|\nabla\cq u\|_{\dot B_{2,1}^{d/2}}\de\tau\nonumber\\
    \lesssim& \|(a,\nu\nabla a,\cq u)(0)\|_{\dot B_{2,1}^{d/2-1}}
    +\int_0^t \|(\nabla u,\nabla V)\|_{L^\infty}\|(\cq u,a,\nu\nabla a)\|_{\dot B_{2,1}^{d/2-1} }\de\tau\nonumber\\
    &+\int_0^t ( \|(h+B)\cdot\nabla(h+B)\|_{\dot B_{2,1}^{d/2-1} }+\|\nabla(|h+B|^2)\|_{\dot B_{2,1}^{d/2-1} })\de\tau\nonumber
    \\
    &+\int_0^t \sum_{j\in\mathbb{Z}}2^{j(d/2-1)}\|[g_j,f_j,\nu\nabla g_j]\|_{L^2}\de\tau
    +\nu^{-1}\int_0^t \|\nu\nabla a\|_{\dot B_{2,1}^{d/2-1}}\de\tau.\label{3.45New}
\end{align}
Now, since $\nu$ is large, we can absorb the term $\nu^{-1}\displaystyle\int_0^t \|\nu\nabla a\|_{\dot B_{2,1}^{d/2-1}}\de \tau$ into the left-hand side. 
By Lemma \eqref{lemmastimaprod} and the commutator estimate \eqref{commutatorestimL2} we can estimate $g_j$ as
\begin{equation}\label{stima g_j}
    \begin{aligned}
        \int_0^t \sum_{j\in\Z} 2^{j(d/2-1)}\|g_j\|_{L^2}\de \tau\lesssim& \int_0^t \|\dive \cq u\|_{\dot B_{2,1}^{d/2}}\|a\|_{\dot B_{2,1}^{d/2-1}}\de\tau+\int_0^t \|\nabla(u+V)\|_{\dot B_{2,1}^{d/2}}\|a\|_{\dot B_{2,1}^{d/2}}\de\tau\\
        \lesssim& \int_0^t \|\nabla(u+V)\|_{\dot B_{2,1}^{d/2}}\left(\|a\|_{\dot B_{2,1}^{d/2-1}}+\|a\|_{\dot B_{2,1}^{d/2}} \right)\de\tau.
    \end{aligned}
\end{equation}
In the same spirit, the term involving $\nabla g_j$ can be bounded by means of the Leibniz rule as follows
\begin{equation}\label{stima nabla g_j}
        \int_0^t \sum_{j\in\Z}2^{j(d/2-1)}\|\nu\nabla g_j\|_{L^2}\de\tau\lesssim 
       \int_0^t \|\nabla(u+V)\|_{\dot B_{2,1}^{d/2}}\nu\|a\|_{\dot B_{2,1}^{d/2}}. 
\end{equation}
By similar arguments, and using the definition of $f_j$ in \eqref{def:fj}, we have
\begin{equation}\label{3.56}
    \begin{aligned}
         \sum_{j\in\mathbb{Z}}2^{j(d/2-1)}\|f_j\|_{L^2}\lesssim& \|(u,V)\|_{\dot B_{2,1}^{d/2-1}}\|(\nabla\cp u,\nabla V)\|_{\dot B_{2,1}^{d/2}}+\|a\|_{\dot B_{2,1}^{d/2}}(\|(u,V)\|_{\dot B_{2,1}^{d/2-1}}\|(\nabla u,\nabla V)\|_{\dot B_{2,1}^{d/2}}\\
         &+\|(V_t,\cp u_t,\cq u_t+\nabla a)\|_{\dot B_{2,1}^{d/2-1}}+\|a\|_{\dot B_{2,1}^{d/2}}).
    \end{aligned}
\end{equation}
Plugging estimates \eqref{stima g_j}, \eqref{stima nabla g_j} and \eqref{3.56} into \eqref{3.45New} and using the embedding $\dot B_{2,1}^{d/2}(\mathbb{R}^d)\hookrightarrow L^\infty (\mathbb{R}^d)$, for all $0<T<T_*$ we have that 
\begin{equation}\label{est X+Y senza Qut+nabla a}
\begin{aligned}
\|(a,\nu\nabla a,\cq u)&\|_{L^\infty\dot B_{2,1}^{d/2-1}}+\nu \int_0^T \|\nabla^2 a^\ell,\nabla^2\cq u\|_{\dot B_{2,1}^{d/2-1}}\de\tau +\int_0^T \|\nabla a^h\|_{\dot B_{2,1}^{d/2-1}} \de\tau\\
\lesssim & \|(a,\nu\nabla a,\cq u)(0)\|_{\dot B_{2,1}^{d/2-1}}+\int_0^T \|(\nabla u,\nabla V)\|_{\dot B_{2,1}^{d/2} }\|(\cq u,a,\nu\nabla a)\|_{\dot B_{2,1}^{d/2-1}}\de\tau\\
&+\left(1+\|a\|_{L^\infty\dot B_{2,1}^{d/2} }\right)\int_0^ T \|(\cp u,\cq u,V)\|_{\dot B_{2,1}^{d/2-1}}\|(\nabla\cp u,\nabla V)\|_{\dot B_{2,1}^{d/2} }\de\tau\\
&+\left(1+\|a\|_{L^\infty\dot B_{2,1}^{d/2} }\right)\int_0^ T \|(\cp u,\cq u,V)\|_{\dot B_{2,1}^{d/2-1}}\|\nabla \cq u\|_{\dot B_{2,1}^{d/2} }\de\tau\\
&+\|(V_t,\cp u_t,\cq u_t+\nabla a)\|_{L^1\dot B_{2,1}^{d/2-1} }\|a\|_{L^\infty\dot B_{2,1}^{d/2} }+\|a\|_{L^2\dot B_{2,1}^{d/2}}^2\\
&+\int_0^T \left(\|(h+B)\cdot\nabla(h+B)\|_{\dot B_{2,1}^{d/2-1}}+\|\nabla(|h+B|^2)\|_{\dot B_{2,1}^{d/2-1}}\right)\de\tau.
\end{aligned}
\end{equation}
We now need an estimate on $\cq u_t+\nabla a$. To do this, we simply use the equation \eqref{3.17NEW} to infer that
\begin{align*}
\|\cq u_t+\nabla a\|_{L^1\dot B_{2,1}^{d/2-1} }\leq& \nu \|\Delta\cq u\|_{L^1\dot B_{2,1}^{d/2-1}}+\|(u+V)\cdot\nabla\cq u\|_{L^1\dot B_{2,1}^{d/2-1} }\\
&+\|(h+B)\cdot\nabla(h+B)\|_{L^1\dot B_{2,1}^{d/2-1} }+\frac{1}{2}\|\nabla(|B+h|^2)\|_{L^1\dot B_{2,1}^{d/2-1} }+\|R_2\|_{L^1\dot B_{2,1}^{d/2-1} }\\
&+\|au_t+aV_t\|_{L^1\dot B_{2,1}^{d/2-1} }.
\end{align*}
Now, since
\begin{align}
    \|au_t+aV_t\|_{L^1\dot B_{2,1}^{d/2-1} }
    &\lesssim \|a\|_{L^\infty \dot B_{2,1}^{d/2}} \|(\cp u_t,V_t,\cq u_t+\nabla a)\|_{L^1(0,T,\dot B_{2,1}^{d/2-1})}+\|a\|_{L^2\dot B_{2,1}^{d/2}}^2,\label{stima au+aV}
\end{align}
and 
\begin{equation*}
    \begin{aligned}
        \int_0^T \|(u+V)\cdot\nabla\cq u\|_{\dot B_{2,1}^{d/2-1} }\de t
        \lesssim& \int_0^T \|(\nabla\cp u,\nabla\cq u,\nabla V)\|_{\dot B_{2,1}^{d/2} }\|\cq u\|_{\dot B_{2,1}^{d/2-1} }   \de t.
    \end{aligned}
\end{equation*}
we can combine the estimate \eqref{stima au+aV} with \eqref{est X+Y senza Qut+nabla a}, to get

\begin{equation}\label{3.57New}
\begin{aligned}
\|(a,\nu\nabla a,\cq u)\|_{L^\infty\dot B_{2,1}^{d/2-1}}&+\|\cq u_t+\nabla a,\nu\nabla^2\cq u,\nu\nabla^2 a^\ell,\nabla a^h\|_{L^1\dot B_{2,1}^{d/2-1}}\\
\lesssim& \|(a,\nu\nabla a,\cq u)(0)\|_{\dot B_{2,1}^{d/2-1}}+\int_0^T \|\nabla u,\nabla V\|_{\dot B_{2,1}^{d/2} }\|(a,\nu\nabla a,\cq u)\|_{\dot B_{2,1}^{d/2-1}}\de\tau\\
&+\left(1+\|a\|_{L^\infty B_{2,1}^{d/2}}\right)\int_0^T \|(\cp u,\cq u,V)\|_{\dot B_{2,1}^{d/2-1}}\|\nabla\cp u,\nabla V\|_{\dot B_{2,1}^{d/2} }\de\tau\\
&+\left(1+\|a\|_{L^\infty B_{2,1}^{d/2}}\right)\int_0^T \|(\cp u,\cq u,V)\|_{\dot B_{2,1}^{d/2-1}}\|\nabla \cq u\|_{\dot B_{2,1}^{d/2} }\de\tau\\
&+\|(V_t,\cp u_t,\cq u_t+\nabla a)\|_{L^1\dot B_{2,1}^{d/2-1}}\|a\|_{L^\infty\dot B_{2,1}^{d/2}}+\|a\|_{L^2\dot B_{2,1}^{d/2}}^2\\
&+\int_0^T \|(\nabla\cp u,\nabla\cq u,\nabla V)\|_{\dot B_{2,1}^{d/2} }\|\cq u\|_{\dot B_{2,1}^{d/2-1} }   \de t+\|(h+B)\cdot\nabla(h+B)\|_{L^1\dot B_{2,1}^{d/2-1}}\\
& +\|\nabla(|h+B|^2)\|_{L^1\dot B_{2,1}^{d/2-1}}+\|R_2\|_{L^1\dot B_{2,1}^{d/2-1}}. 
\end{aligned}
\end{equation}
Then, assumining that $\delta$ is small enough and using the fact that \eqref{3.29} implies that
\begin{equation}\label{3.58}
    \|a\|_{L^\infty\dot B_{2,1}^{d/2}}\ll 1,
\end{equation}
the estimate in \eqref{3.57New} reduces to
\begin{align*}
\|(a,\nu\nabla a,\cq u)&\|_{L^\infty\dot B_{2,1}^{d/2-1}}+\|\cq u_t+\nabla a,\nu\nabla^2\cq u,\nu\nabla^2 a^\ell,\nabla a^h\|_{L^1\dot B_{2,1}^{d/2-1}}\\
\lesssim& \|(a,\nu\nabla a,\cq u)(0)\|_{\dot B_{2,1}^{d/2-1}}+\int_0^T \|(\nabla \cp u,\nabla \cq u,\nabla V)\|_{\dot B_{2,1}^{d/2} }\|(\cq u,a,\nu\nabla a)\|_{\dot B_{2,1}^{d/2-1}}\de\tau\\
& +\int_0^T \|V\|_{\dot B_{2,1}^{d/2-1} }^2\|(\cq u,a,\nu\nabla a)\|_{\dot B_{2,1}^{d/2-1}}\de\tau+\|(\cp u,V)\|_{L^\infty \dot B_{2,1}^{d/2-1}}\|(\nabla\cp u,\nabla V)\|_{L^1 \dot B_{2,1}^{d/2}}\\
&+\|a\|_{L^\infty\dot B_{2,1}^{d/2}} \|(\cp u,V)\|_{L^\infty \dot B_{2,1}^{d/2-1}}\|(\nabla\cp u,\nabla V)\|_{L^1\dot B_{2,1}^{d/2} }\\
&+\|(V_t,\cp u_t)\|_{L^1\dot B_{2,1}^{d/2-1}}\|a\|_{L^\infty \dot B_{2,1}^{d/2} } +\|(h+B)\cdot\nabla(h+B)\|_{L^1 \dot B_{2,1}^{d/2-1} }+\|\nabla(|B+h|^2)\|_{L^1 \dot B_{2,1}^{d/2-1} }\\
&+\nu^{-1}\left(\|a^\ell\|_{L^\infty \dot B_{2,1}^{d/2-1}}\|\nu\nabla a^\ell\|_{L^1\dot B_{2,1}^{d/2}}+\|\nu a^h\|_{L^\infty \dot B_{2,1}^{d/2} }\|a^h\|_{L^1\dot B_{2,1}^{d/2} }\right)
\end{align*}
In order to conclude, we need to handle the terms involving the magnetic field. As already done before, we use Lemma \ref{lemmastimaprod} and the definition of the quantities in \eqref{def:quantità}, to get that
\begin{align}
    \|(h+B)\cdot\nabla(h+B)\|_{L^1\dot B_{2,1}^{d/2-1}}
    \leq& \|h\|_{L^\infty \dot B_{2,1}^{d/2-1}}\|B\|_{L^1\dot B_{2,1}^{d/2+1}}+\|B\|_{L^\infty \dot B_{2,1}^{d/2-1}}\|\nabla^2 h\|_{L^1\dot B_{2,1}^{d/2-1}}\nonumber\\
    &+\|h\|_{L^\infty \dot B_{2,1}^{d/2-1}}\|\nabla^2 h\|_{L^1\dot B_{2,1}^{d/2-1}}+\|B\|_{L^\infty \dot B_{2,1}^{d/2-1} }\| B\|_{L^1\dot B_{2,1}^{d/2+1}}\nonumber\\
    \lesssim& \mathcal{Z}\mathcal{V}+\mathcal{W}\mathcal{V}+\mathcal{Z}\mathcal{W}+\mathcal{V}^2,
\end{align}
Moreover, since 
\begin{align*}
\int_0^T \|\nabla(B\cdot h)\|_{\dot B_{2,1}^{d/2-1}}\de\tau 
\lesssim&  \| B\|_{L^1\dot B_{2,1}^{d/2+1}}\|h\|_{L^\infty \dot B_{2,1}^{d/2-1}}+ \|\nabla^2 h\|_{L^1\dot B_{2,1}^{d/2-1}}\|B\|_{L^\infty \dot B_{2,1}^{d/2-1}}\\
\lesssim& \mathcal{V}\mathcal{Z}+\mathcal{V}\mathcal{W},
\end{align*}
we obtain the following bound for the pressure-like term $\nabla (|B+h|^2)$
\begin{align*}
\|\nabla(|B+h|^2)\|_{L^1\dot B_{2,1}^{d/2-1}}
\lesssim& \|\nabla |B|^2\|_{L^1\dot B_{2,1}^{d/2-1}}+\|\nabla |h|^2 \|_{L^1\dot B_{2,1}^{d/2-1}}+\|\nabla(B\cdot h)\|_{L^1\dot B_{2,1}^{d/2-1}}\\
\lesssim& \mathcal{V}^2+\mathcal{Z}\mathcal{W}+\mathcal{V}\mathcal{Z}+\mathcal{V}\mathcal{W},
\end{align*}
Hence, we can rewrite \eqref{3.57New}, in terms of the quantities defined in \eqref{def:quantità} as
\begin{align}
\begin{aligned}
\mathcal{X}+\mathcal{Y}
&\lesssim
\mathcal{X}(0)+\int_0^T
\Bigl(
\|\nabla\cp u,\nabla\cq u,\nabla V\|_{L^1\dot B_{2,1}^{d/2} }+\|V\|^2_{L^2\dot B_{2,1}^{d/2} }\Bigr)
\bigl(\mathcal{X}(\tau)+\mathcal{Y}(\tau)\bigr)\,\de \tau
\\
&+ \int_0^T
\Bigl[
(\mathcal{V}(\tau)+\mathcal{Z}(\tau))(\mathcal{V}(\tau)+\mathcal{W}(\tau))\Bigr]\de \tau\\
&+\int_0^T
\Bigl[\nu^{-2}\mathcal{X}(\tau)\mathcal{Y}(\tau)(\mathcal{V}(\tau)+\mathcal{Z}(\tau))
+\nu^{-1}(\mathcal{V}(\tau)+\mathcal{Y}(\tau)+\mathcal{W}(\tau))\mathcal{X}(\tau)
\Bigr]\,\de\tau .
\end{aligned}
\end{align}
and then from Gronwall lemma, we obtain the following bound for the potential part
\begin{equation}\label{bound Compr part}
    \begin{aligned}
        \mathcal{X}+&\mathcal{Y}\leq Ce^{C\|\nabla\cp u,\nabla\cq u,\nabla V\|_{L^1\dot B_{2,1}^{d/2} }+\|V\|^2_{L^2\dot B_{2,1}^{d/2} }}(X_d(0)
        +(\mathcal{V}+\mathcal{Z})(\mathcal{V}+\mathcal{W})\\
        &+\nu^{-2}\mathcal{X}\mathcal{Y}(\mathcal{V}+\mathcal{Z})+\nu^{-1}(\mathcal{V}+\mathcal{Y}+\mathcal{W})\mathcal{X}).
    \end{aligned}
\end{equation}
{\em \underline{Step 4} \quad Closure of the estimates.}\\
\\
In this final step, we collect all the estimates obtained so far and choose the parameters appropriately in order to close the argument.
First of all, we assume that
\begin{equation}\label{D lessless nu}
    \nu^{-1}D\ll 1.
\end{equation}
Then, for the compressible part, namely to estimate $\mathcal{X}+\mathcal{Y}$, we can proceed as follows. We use the interpolation inequality \eqref{interpolation} to bound
$$
\int_0^T \|V\|_{\dot B_{2,1}^{d/2}}^2\de\tau \lesssim\|V\|_{L^\infty \dot B_{2,1}^{d/2-1}}\|\nabla^2V\|_{L^1 \dot B_{2,1}^{d/2-1}}.
$$
Therefore, using inequality \eqref{bound Compr part} together with assumptions \eqref{bound V} and \eqref{local bounds X Y Z W}, we obtain
\begin{equation}
\label{X+Y quasi finito}
\mathcal{X}+\mathcal{Y}\leq Ce^{C(M+M^2+\nu^{-1}D+\delta)}(\mathcal{X}(0)+(M+\delta)^2+\nu^{-2}D(M+\delta)\mathcal{X}+\nu^{-1}(D+\delta+M)\mathcal{X}).
\end{equation}
On the other hand, to estimate the incompressible part $\mathcal{Z}+\mathcal{W}$ in \eqref{local est Z+W}, we use \eqref{interpolation}, \eqref{def:quantità}, and \eqref{local bounds X Y Z W} to get 
\begin{align*}
\|\cq u\|^2_{L^2\dot B_{d,1}^{d/2}}\leq \|\cq u\|_{L^\infty\dot B_{d,1}^{d/2-1}}\|\nabla\cq u\|_{L^1\dot B_{d,1}^{d/2}}\leq \nu^{-1}D^2,
\end{align*}
and finally we obtain
\begin{align}
\mathcal{Z}+\mathcal{W}
\lesssim& DCe^{C(M+\delta+\nu^{-1}D+\nu^{-1}D^2+M^2)}(\nu^{-1}(D+\delta+M)+\nu^{-1/2}M+\nu^{-1}(\delta+D+M)\mathcal{W}\nonumber\\
&+\nu^{-1}M^2+\nu^{-1}M).\label{Z+W quasi finito}
\end{align}
We assume that 
\begin{equation}\label{relazione nu, D, M}
\nu^{-1}D+\nu^{-1}D^2\leq M+M^2 \quad \text{and} \quad \delta \leq \text{max}\{M,1\},
\end{equation}
hence \eqref{X+Y quasi finito} and \eqref{Z+W quasi finito} become
\begin{align*}
    \mathcal{X}+\mathcal{Y}&\leq Ce^{C(M+M^2) }(\mathcal{X}(0)+M^2+1+\nu^{-1}(M+D+1)\mathcal{X}),\\
\mathcal{Z}+\mathcal{W}  
&\leq DCe^{C(M+M^2)}(\nu^{-1}D+\nu^{-1/2}M+\nu^{-1}M^2+\nu^{-1}(M+D+1)\mathcal{W}).
\end{align*}
Therefore, we consider $\nu$ big enough so that
\begin{equation}\label{3.62}
    D(D+M+1)e^{C(M+M^2)}\ll\nu,
\end{equation}
in order to have the bounds
\begin{equation}\label{X+Y}
\mathcal{X}+\mathcal{Y}\leq Ce^{C(M+M^2)}(\mathcal{X}(0)+M^2+1),
\end{equation}
and
\begin{equation}\label{Z+W}
    \begin{aligned}
    \mathcal{Z}+\mathcal{W}
    \leq& CDe^{C(M+M^2)}(\nu^{-1/2}M+\nu^{-1}(D+M^2+D^2+1)),
    \end{aligned}
\end{equation}
where, in the last inequality we used Young's inequality
\begin{equation*}
\nu^{-1}DM
\lesssim \nu^{-1}M^2+\nu^{-1}D^2.
\end{equation*}
We can now explicitly define the parameters $D$ and $\delta$: we set
\begin{equation}\label{def D}
    D :=Ce^{C(M+M^2)}(\mathcal{X}(0)+M^2+1),
\end{equation}
and 
\begin{equation}\label{def delta}
    \delta :=Ce^{2C(M+M^2)}D\left(\nu^{-1/2}M+\nu^{-1}(\mathcal{X}(0)+M^2+1)+\nu^{-1}(\mathcal{X}(0)+M^2+1)^2\right).
\end{equation}
Thus, if $C$ and $\nu$ are sufficiently large and satisfy the bound
\begin{equation}
    Ce^{C(M+M^2)}(\mathcal{X}(0)+M^2+1)\leq \sqrt{\nu},
\end{equation}
the \eqref{D lessless nu} and \eqref{3.62} are automatically satisfied. Indeed
\begin{equation*}
    \nu^{-1}D\leq \nu^{-1}\sqrt{\nu}=\nu^{-1/2}\ll1,
\end{equation*}
and
\begin{equation*}
    D(D+M)e^{C(M+M^2)}  \leq \sqrt{\nu}\sqrt{\nu}+\sqrt{\nu}\ll 2\nu.
\end{equation*}
Then, by choosing $D$ and $\delta$ as in \eqref{def D} and \eqref{def delta}, the inequalities in \eqref{X+Y} and \eqref{Z+W} implies that the conditions \eqref{local bounds X Y Z W} (and consequently \eqref{3.29}) hold for all $T<T_*$. Moreover, by definition of $\mathcal{X}$, \eqref{local bounds X Y Z W} and \eqref{D lessless nu} we have for all $t\in[0,T]$
\begin{equation}
\|a(t)\|_{L^\infty}\lesssim \|a(t)\|_{\dot B_{2,1}^{d/2} }\lesssim \nu^{-1}\mathcal{X}\lesssim \nu^{-1}D\ll1.
\end{equation}
Therefore, the assumptions of the continuation criterion in Theorem \ref{thm:continuation} are satisfied, which yields the existence of a global strong solution to \eqref{MHDcomp}.
Finally, if $a_0=0,$ then the above analysis implies that
\begin{equation*}
    \begin{aligned}
        &\nu\|\rho-1\|_{L^\infty(\R_+;\dot B_{2,1}^{d/2})}\lesssim \nu \|\nabla a\|_{L^\infty(\R_+;\dot B_{2,1}^{d/2-1})}\leq D,\\
        &\nu \|\nabla^2\cq v\|_{L^1(\R_+;\dot B_{2,1}^{d/2})}\leq D,\\
        &\|\cp v-V\|_{L^\infty(\R_+;\dot B_{2,1}^{d/2-1}) }\lesssim\|u\|_{L^\infty(\R_+;\dot B_{2,1}^{d/2-1}) }\leq C\nu^{-1/2},\\
        &\|v-V_t,\mu\nabla^2(\cp v-V)\|_{L^1(\R_+;\dot B_{2,1}^{d/2-1})}\leq C\nu^{-1/2},\\
        &\|b-B\|_{L^\infty(\R_+;\dot B_{2,1}^{d/2-1})}\leq C\nu^{-1/2},\\
        &\|b_t-B_t,\eta\nabla^2(b-B)\|_{L^1(\R_+;\dot B_{2,1}^{d/2-1})}\leq C\nu^{-1/2}.
    \end{aligned}
\end{equation*}  
Therefore, one can show that the solution of the compressible system converges to the incompressible one with the quantitative rate of convergence as in \eqref{estconv}. Finally, the quantitative bound on the constan $M$ follows by Lemma \ref{lem:M}.
\end{proof}
\section{\texorpdfstring{$L^p$}{Lp} Besov framework}
\label{lp framework}
In this section, we extend the result of Theorem \ref{thm:main} to $L^p$ based Besov space, proving Theorem \ref{thm:Lp intro}. 
The strategy developed in the $L^2$ framework carries over to the $L^p$ case, although several steps require more delicate arguments and sharper estimates. 
For the reader’s convenience, we recall here the statement of the theorem in the $L^p$ context before turning to its proof.
\MainTheoremB*
\begin{proof}
We divide the proof in several steps. In what follows, we briefly indicate the arguments that are analogous to the $L^2$ case and focus on the modifications required by the $L^p$ framework. Moreover, we do not reproduce all the estimates in full detail. On the one hand, several arguments are straightforward adaptations of those used in the proof of Theorem \ref{thm:main}. On the other hand, in the $L^p$ framework the main difficulty concerns the compressible part of the dynamics. Since the magnetic field is divergence-free, it does not significantly affect this analysis, and the corresponding estimates can be carried out exactly as in the compressible Navier–Stokes case treated in \cite{Chen-Zhai:Lp}.\\
\\
{\em \underline{Step 0} \quad Local well-posedness.}\\
\\
First of all, we note that from the embedding $\dot B_{2,1}^{d/2}\hookrightarrow \dot B_{p,1}^{d/p}$ and using that $\nu^{-1}\ll1$, we have 
$$
\|a_0\|_{\dot B_{p,1}^{d/p} }\lesssim \|a_0^h\|_{\dot B_{p,1}^{d/p}}+ \|a_0^\ell\|_{\dot B_{2,1}^{d/2}}\lesssim \|a_0^h\|_{\dot B_{p,1}^{d/p}}+ \|a_0^\ell\|_{\dot B_{2,1}^{d/2-1}}.
$$
The same estimate applies to the velocity field $v_0$. Thus, we can invoke Theorem \ref{thm:lwp}: there exists some $T>0$ and a unique solution $(\rho,v,b)$ to \eqref{MHDcomp} satisfying
\begin{equation*}
    \begin{aligned}
        &a=\rho-1\in C([0,T);\dot B_{p,1}^{d/p}),\\
        &v,b\in C([0,T),\dot B_{p,1}^{d/p-1})\cap L^1((0,T);\dot B_{p,1}^{d/p+1}).
    \end{aligned}
\end{equation*}
Then, we define the differences
$$
u:=v-V,\qquad h:=b-B,
$$
and we introduce the analogous energy functionals defined in Theorem \ref{thm:main}.
\begin{align*}
\xp:=&\|(a^\ell,\nu\nabla a^\ell,\cq u^\ell)\|_{L^\infty\dot B_{2,1}^{d/2-1} }+\|\nu a^h\|_{L^\infty\dot B_{p,1}^{d/p} }+\|\cq u^h\|_{L^\infty\dot B_{p,1}^{d/p-1}}, \\
\yp:=& \|\nu a^\ell,\nu^2\nabla a^\ell,\nu\cq u^\ell)\|_{L^1\dot B_{2,1}^{d/2+1}}+\|a^h\|_{L^1\dot B_{p,1}^{d/p}}+\|\nu\cq u^h\|_{L^1\dot B_{p,1}^{d/p+1}}\\
&+\|(\cq u_t+\nabla a)^\ell\|_{L^1\dot B_{2,1}^{d/2-1}}+\|(\cq u_t+\nabla a)^h\|_{L^1\dot B_{p,1}^{d/p-1}} \\
    \zp :=&\|\cp u\|_{L^\infty\dot B_{p,1}^{d/p-1}}+\|h\|_{L^\infty\dot B_{p,1}^{d/p-1}}, \\
    \wp :=& \|\cp u_t\|_{L^1\dot B_{p,1}^{d/p-1}}+\|\cp u\|_{L^1\dot B_{p,1}^{d/p+1}}+\|h_t\|_{L^1\dot B_{p,1}^{d/p-1}}+\|h\|_{L^1\dot B_{p,1}^{d/p+1}}\\
    \vp:=& \|V\|_{L^\infty\dot B_{p,1}^{d/p-1}}+\|V_t\|_{L^1\dot B_{p,1}^{d/p-1}}+\|V\|_{L^1\dot B_{p,1}^{d/p+1}}+\|B\|_{L^\infty\dot B_{p,1}^{d/p-1}}+\|B\|_{L^1\dot B_{p,1}^{d/p+1}}.
\end{align*}
Again, we point out that all the temporal norms are computed over the time interval $(0,T)$ in which the local solution is defined. We omit the domain dependece in the norms above to lighten the notation.
Our goal now is to prove that if $\nu$ is sufficiently large (with respect to suitable norms of the initial data and of the reference incompressible solution) then one can find some (large) $D$ and (small) $\delta$ so that for all $T<T^*$:
\begin{equation}\label{claim_xp+yp_zp+wp}
    \xp+\yp\leq D \quad \text{and} \quad \zp+\wp\leq \delta.
\end{equation}
\\
{\em \underline{Step 1} \quad Estimates for the Incompressible Part of \eqref{MHDcomp}.}\\
\\
In this step we look for bounds on the quantities $\zp$ and $\wp$.
To do that, we consider the evolution of the incompressible part of the system \eqref{Peq}, and we rewrite it as a coupled system of heat equations
\begin{equation}\label{parte inc del sist}
    \begin{cases}
        \cp u_t-\Delta \cp u=-\cp H_1, \\
        h_t-\Delta h=-\cp F_1,\\
        \cp u|_{t=0}=0, \ h|_{t=0}=0,
    \end{cases}
\end{equation}
with forcing terms defined as follows
\begin{align*}
    H_1:=& a(V_t+\cp u_t+(\cq u_t+\nabla a))+(1+a)\cp u\cdot\nabla(V+\cq u)+a(u+V)\cdot\nabla\cp u \\
    &+(u+V)\cdot\nabla\cp u+(1+a)(V\cdot\nabla\cq u+\cq u\cdot\nabla V)+a(\cq u\cdot\nabla\cq u+V\cdot\nabla V) \\
    &-h\cdot\nabla B-B\cdot\nabla h-h\cdot\nabla h, \\
    F_1:=&-\dive(\cq u)h-B\dive u-u\cdot\nabla B-\cp u\cdot\nabla h-\cq u\cdot\nabla h-V\cdot\nabla h+B\cdot\nabla u+h\cdot\nabla\cp u\\
    &+h\cdot\nabla\cq u+h\cdot\nabla V.
\end{align*}
The derivation of energy estimates for this system is largely parallel to the $L^2$ setting, up to suitable modifications accounting for the use of $L^p$–based Besov spaces. Indeed, we apply $\dot \Delta_j$ to both side of $\eqref{Peq}_1$, we multiply by $|\dot \Delta_j\cp u|^{p-2}\dot \Delta_j\cp u $ and we integrate in space, obtaining 
\begin{equation*}
    \frac{1}{p}\frac{\de}{\de t}\|\dot \Delta_j\cp u\|_{L^p}^p+C_12^{2j}\|\dot \Delta_j\cp u\|_{L^p}^p\lesssim \|\dot \Delta_j\cp H_1\|_{L^p}\|\dot \Delta_j\cp u\|_{L^p}^{p-1}.
\end{equation*}
Thus, we can divide by $\|\dot \Delta_j\cp u\|_{L^p}^{p-1}$, we multiply by $2^{j(d/p-1)}$, we sum over $j\in\Z  $ and we integrate over $(0,T)$ to get the estimate 
\begin{equation}\label{est:Pu Lp}
\|\cp u\|_{L_{T}^\infty \dot B_{p,1}^{d/p-1} }+\|\cp u\|_{L_{T}^1\dot B_{p,1}^{d/p+1}}\lesssim \int_0^T \|\cp H_1\|_{ \dot B_{p,1}^{d/p-1} }\de t.
\end{equation}
From now on, we keep track of the dependence of the space–time norms on the time interval $(0,T)$ as this will be needed to apply Gronwall’s inequality in a transparent way.
With a similar argument, for the magnetic field we also get that
\begin{equation}\label{{est:h Lp}}
    \|h\|_{L_{T}^\infty \dot B_{p,1}^{d/p-1} }+\|h\|_{L_{T}^1\dot B_{p,1}^{d/p+1}}\lesssim \int_0^T \|\cp F_1\|_{ \dot B_{p,1}^{d/p-1} }\de t.
\end{equation}
We now need to bound the remainder terms: as in the $L^2$ case, we use product laws Lemma \ref{lemmastimaprodLP} and commutator estimates in Lemma \ref{commut est Lpsetting}. Moreover, we can roughly estimate $\|\cp u_t\|_{L^1_T\dot B_{p,1}^{d/p-1} } $ and $\|h_t\|_{L^1_T\dot B_{p,1}^{d/p-1} }$ directly from the equations \eqref{parte inc del sist}, and we get that 
\begin{align*}
  &\bullet \|\cp H_1\|_{ \dot B_{p,1}^{d/p-1} }\lesssim  \nu^{-1}\xp(\yp+\wp+\vp)+\int_0^T \left(\|\nabla V\|_{\dot B_{p,1}^{d/p} }+\|\nabla \cq u\|_{\dot B_{p,1}^{d/p} } \right)\|\cp u\|_{\dot B_{p,1}^{d/p-1} }\de t\\
    &\hspace{3cm}+\nu^{-1}\xp(\zp+\xp+\vp)\wp+\varepsilon\|\cp u\|_{L^1(\dot B_{p,1}^{d/p+1}) } +(1+\nu^{-1}\xp)\nu^{-1/2}\xp^{1/2}\yp^{1/2}\vp\\
    &\hspace{3cm}+\int_0^T \left(\|u\|_{\dot B_{p,1}^{d/p}}^2+\|V\|_{\dot B_{p,1}^{d/p}}^2 \right)\|\cp u\|_{\dot B_{p,1}^{d/p-1} }\de t+\nu^{-1}\xp(\nu^{-1}\xp\yp+\vp^2)\\
    &\hspace{3cm}
    +\int_0^T \left(\|B\|_{\dot B_{p,1}^{d/p+1}  }+\|h\|_{\dot B_{p,1}^{d/p+1}}\right) \|h\|_{\dot B_{p,1}^{d/p-1} }\de t, \\
  &\bullet \|\cp F_1\|_{ \dot B_{p,1}^{d/p-1} }\lesssim 
  \int_0^T (\|\nabla\cq u\|_{\dot B_{p,1}^{d/p} }
  +\|\nabla V\|_{\dot B_{p,1}^{d/p}}
  +\|\cp u\|_{\dot B_{p,1}^{d/p+1} }
  )\|h\|_{\dot B_{p,1}^{d/p-1} }\de t+\nu^{-1}\vp\yp
  \\
  &\hspace{3cm} +\int_0^T( \|B\|_{\dot B_{p,1}^{d/p+1} }
  +\|h\|_{\dot B_{p,1}^{d/p+1}} )\|\cp u\|_{\dot B_{p,1}^{d/p-1} }\de t,\\
  &\bullet \|\cp u_t\|_{L^1_T\dot B_{p,1}^{d/p-1} }\lesssim \|\cp u\|_{L^1_T\dot B_{p,1}^{d/p+1} }+ \|\cp H_1\|_{L^1_T\dot B_{p,1}^{d/p-1}}, \\
   &\bullet \|\cp h_t\|_{L^1_T\dot B_{p,1}^{d/p-1}}\lesssim  
   \|h\|_{L^1_T\dot B_{p,1}^{d/p+1}}+\|\cp F_1\|_{L^1_T\dot B_{p,1}^{d/p-1}}.
\end{align*}

So, by defining
\begin{align}
    f(t)&=\|(\nabla V,\nabla\cq u)\|_{\dot B_{p,1}^{d/p} }+\|u\|^2_{\dot B_{p,1}^{d/p} }+\|V\|_{\dot B_{p,1}^{d/p} }^2+\|B\|_{\dot B_{p,1}^{d/p+1}}+\|h\|_{\dot B_{p,1}^{d/p+1}  }+\|\cp u\|_{\dot B_{p,1}^{d/p+1}},\\
    g(t)&=\nu^{-1/2}\xp^{1/2}\yp^{1/2}\vp+\nu^{-1}\xp(\zp+\xp+\vp)\wp+\nu^{-1}(\yp+\wp+\vp)\xp\nonumber\\
    &+\nu^{-1}\xp(\vp^2+\nu^{-1}\xp\yp)+\nu^{-1}\vp\yp
\end{align}
we can rewrite our estimate as 
\begin{equation*}
\zp+\wp \lesssim \int_0^T f(t)\left(\|\cp u\|_{\dot B_{p,1}^{d/p-1}} +\|h\|_{\dot B_{p,1}^{d/p-1}}\right)\de t+g(t),
\end{equation*}
and by Gr\"onwall lemma we can conclude that
\begin{equation}\label{est zp+wp}
\zp+\wp \leq Ce^{ C\int_0^Tf(t)\de t}g(t).
\end{equation}
\\
{\em \underline{Step 2} \quad High-Frequency Estimates for the Compressible Part of \eqref{MHDcomp}.}\\
\\
We now turn to the analysis of the compressible component, where the main differences with respect to the incompressible part arise. In this case, it is necessary to distinguish between low and high frequency regimes. We therefore consider the evolution of the compressible part \eqref{eq:differenze} and rewrite it in the following form
\begin{equation}\label{sist comp}
   \begin{cases}
       a_t+ (u+V)\cdot\nabla a =-a\dive (\cq u)-\dive (\cq u), \\
       \cq u_t-\nu\Delta\cq u+\nabla a=-\cq H_2, \\
       a|_{t=0}=a_0, \quad \cq u|_{t=0}=\cq v_0.
   \end{cases}
\end{equation}
where the remainder term $H_2$ is defined as
\begin{equation}
    \begin{aligned}
        H_2:=& a(V_t+\cp u_t+(\cq u_t+\nabla a))+(1+a)(u+V)\cdot\nabla(u+V)+(k(a)-a)\nabla a\\
        &-(h+B)\cdot\nabla(h+B))-\frac{1}{2}\nabla(|h+B|^2).
    \end{aligned}
\end{equation}
Note that the contribution of the magnetic field is relatively mild in \eqref{sist comp}. Indeed, the divergence-free condition prevents it from affecting the leading compressible dynamics, and its effect is therefore encoded only in the remainder terms, which will be treated perturbatively. 
On the other hand, the density equation is of transport type, while the equation for the compressible part of the velocity $\cq u$ involves $\nabla a$ as a source term. When trying to perform an $L^p$–energy estimate, these two equations cannot be combined, since the cancellations that arise from integration by parts in the $L^2$ framework are no longer available in $L^p$. Specifically, in the $L^2$ setting we first derive an energy balance for the mixed term $\int_{\R^d}\nabla a\cdot \cq u$ and then we find estimates on the corresponding norm. This strategy seems not to work in the $L^p$ case. To overcome this difficulty, one must introduce the effective velocity
\begin{equation}\label{effective velocity}
    w:=\cq u+\nu^{-1}(-\Delta)^{-1}\nabla a.
\end{equation}
which combines information from both the density and the velocity. This change of variables absorbs the gradient term $\nabla a$ appearing in the equation for $\cq u$, and transforms the system into one where $w$ satisfies a diffusive equation. By rewring the equation for the density in terms of $w$, the dissipation acting on $w$ can be transferred to $a$. Consequently, one can recover a damping effect for the density and, by combining the estimates for $w$ and $a$, obtain suitable bounds for $\cq u$. Thus, substituting \eqref{effective velocity} into \eqref{sist comp}, we obtain 
\begin{equation}\label{sist effective vel}
    \begin{cases}
        w_t-\nu\Delta w=\nu^{-1}w-\nu^{-2}(-\Delta)^{-1}\nabla a+\nu^{-1}\cq(a(u+V))-\cq H_2, \\
        a_t+(u+V)\cdot\nabla a+\nu^{-1}a=-a\dive u-\dive w.
    \end{cases}
\end{equation}
We now estimate $w$ by adapting the arguments developed for the incompressible component: we apply the operator $\dot\Delta_j$ to system \eqref{sist effective vel}, we multiply by $|\dot\Delta_j w|^{p-2}\dot\Delta_j w$, we integrate in space, we multiply by $2^{j(d/p-1)}$ and we sum over the high-frequency ($2^j\nu>1$) and we obtain 
\begin{equation}
    \begin{aligned}
            \|w^h\|_{L^\infty\dot B_{p,1}^{d/p-1} }+\nu\|w^h\|_{L^1\dot B_{p,1}^{d/p+1} }\lesssim &\|w_0^h\|_{\dot B_{p,1}^{d/p-1}}
            +\|\nu^{-1}w^h\|_{L^1\dot B_{p,1}^{d/p-1} }+\|\nu^{-2}(-\Delta)^{-1}\nabla a^h\|_{L^1\dot B_{p,1}^{d/p-1} }\\
            &+\|\nu^{-1}\cq(a(u+V))^h\|_{L^1\dot B_{p,1}^{d/p-1} }
            +\|\cq H_2^h\|_{L^1\dot B_{p,1}^{d/p-1} }.
    \end{aligned}
\end{equation}
By using the product estimate in Besov spaces in Lemma \ref{lemmastimaprodLP} and similar arguments to those above, we obtain
\begin{equation}\label{est effective velocity}
    \begin{aligned}
        \|w^h\|_{L^\infty\dot B_{p,1}^{d/p-1} }+\nu\|w^h\|_{L^1\dot B_{p,1}^{d/p+1} }\lesssim & \|w_0^h\|_{\dot B_{p,1}^{d/p-1}}
            +\|\nu^{-1}w^h\|_{L^1\dot B_{p,1}^{d/p-1} }+\nu^{-2}\|a^h\|_{\dot B_{p,1}^{d/p}}+(\vp+\wp)(\zp+\vp)\\
            &+\nu^{-1}\yp(\zp+\vp)+\nu^{-1}\xp(\wp+\vp+\yp)+\nu^{-1}\xp\yp\\
            &+\int_0^T \|(\nabla u,\nabla V)\|_{\dot B_{p,1}^{d/p} }\left(\|\cq u^h\|_{\dot B_{p,1}^{d/p-1} }+\|\cq u^\ell\|_{\dot B_{2,1}^{d/2-1} } \right)\de t \\
            &+\int_0^T \left( \nu^{-1}\|a^h\|_{\dot B_{p,1}^{d/p} }+\nu^{-1}\|\nu a^\ell\|_{\dot B_{2,1}^{d/2-1} } \right) \de t.
    \end{aligned}
\end{equation}
We now look for an estimate on the high frequencies of $a$: we follow the same energy-type method and we get
\begin{equation}\label{enerestp a}
    \begin{aligned}
       \| \dot\Delta_j a(t)\|_{L^p}+\nu^{-1}\int_0^T \|\dot\Delta_j a\|_{L^p}\de t \leq& \|\dot\Delta_j a_0\|_{L^p}+ \frac{1}{p}\int_0^T \|\dive (u+V)\|_{L^\infty}\|\dot\Delta_j a\|_{L^p}\de t  \\
           &+\int_0^T\|[(u+V)\cdot\nabla,\dot\Delta_j ]a\|_{L^p}\de t+\int_0^T \|\dot\Delta_j (a\dive u+\dive w)\|_{L^p}\de t.
    \end{aligned}
\end{equation}
To bound the last two term above, we apply Lemma \ref{lemmastimaprodLP} and Lemma \ref{commut est Lpsetting}
\begin{itemize}
    \item $\|a\dive u\|_{\dot B_{p,1}^{d/p} }\lesssim \|a\|_{\dot B_{p,1}^{d/p}}\|\dive u\|_{\dot B_{p,1}^{d/p}}$,
    \item $\sum_{j\in\Z}2^{jd/p}\|[(u+V)\cdot\nabla, \dot\Delta_j]a\|_{L^p}\lesssim \|u+V\|_{\dot B_{p,1}^{d/p+1}}\|a\|_{\dot B_{p,1}^{d/p}}$,
\end{itemize}
and, moreover, since we are in the high-frequency regime, it follows that
$$
\|\dive w^h\|_{\dot B_{p,1}^{d/p}}\lesssim \|w^h\|_{\dot B_{p,1}^{d/p+1}}\lesssim \nu^{-2}\|w^h\|_{\dot B_{p,1}^{-2+d/p}}.
$$
Multiplying \eqref{enerestp a} by $2^{\frac{d}{p}j},$ summing over the high frequencies and using the embedding $\dot B_{p,1}^{d/p}(\R^d)\hookrightarrow L^\infty(\R^d)$ together with the estimate for the effective velocity \eqref{est effective velocity}, we obtain: 
\begin{equation}\label{est a w 1}
    \begin{aligned}
        \|\nu a^h\|_{L^\infty\dot B_{p,1}^{d/p} }&+\|a^h\|_{L^1\dot B_{p,1}^{d/p} }+\|w^h\|_{L^\infty\dot B_{p,1}^{d/p-1} }+\nu\|w^h\|_{L^1\dot B_{p,1}^{d/p+1} }\\
        \lesssim & \|\nu a_0^h\|_{\dot B_{p,1}^{d/p} }+\|w_0^h\|_{\dot B_{p,1}^{d/p-1} }+\nu^{-1}\|w^h\|_{L^1\dot B_{p,1}^{d/p-1} }   
        \\
        &+\nu^{-2}\|a^h\|_{L^1\dot B_{p,1}^{d/p} }
        +(\vp+\wp)(\zp+\vp)+\nu^{-1}\yp(\zp+\vp)+\nu^{-1}\xp(\wp+\vp+\yp)+\nu^{-1}\xp\yp\\
        &+\int_0^T\|(\nabla u,\nabla V)\|_{\dot B_{p,1}^{d/p} }\left(\|\cq u^h\|_{\dot B_{p,1}^{d/p-1} }+\|\cq u^\ell\|_{\dot B_{2,1}^{d/2-1} }+\|\nu a^h\|_{\dot B_{p,1}^{d/p}}+\|\nu a^\ell\|_{\dot B_{2,1}^{d/2}} \right)\de t\\
        &+\int_0^T \left(\nu^{-1}\|a^h\|_{\dot B_{p,1}^{d/p}}+\nu^{-1}\|\nu a\|_{\dot B_{2,1}^{d/2+1}} \right)\left(\|\nu a^h\|_{\dot B_{p,1}^{d/p}}+\|a^\ell\|_{\dot B_{2,1}^{d/2-1}} \right)\de t
    \end{aligned}
\end{equation}
Since $\nu\gg1 $, the terms $\nu^{-1}\|w^h\|_{L^1\dot B_{p,1}^{d/p-1}}, \nu^{-2}\|a^h\|_{L^1\dot B_{p,1}^{d/p}}$
can be absorbed into the left-hand side.
Moreover, we use the definition of $w$
\begin{equation*}
    \cq u=w+\nu^{-1}(-\Delta)^{-1}\nabla a,
\end{equation*}
to obtain the estimates
\begin{align*}
    \|\cq u^h\|_{L^\infty\dot B_{p,1}^{d/p-1}}
    \lesssim&\|w^h\|_{L^\infty\dot B_{p,1}^{d/p-1}}+\|\nu a^h\|_{L^\infty\dot B_{p,1}^{d/p}}, \\
\end{align*}
where we used that $2^{-2j}<\nu^2$, and
\begin{align*}
    \nu \|\cq u^h\|_{L^1\dot B_{p,1}^{d/p+1}}
    \lesssim& \|\nu w^h\|_{L^1\dot B_{p,1}^{d/p+1}}+\|a^h\|_{L^1\dot B_{p,1}^{d/p}}.
\end{align*}

Therefore from \eqref{est a w 1} we can conclude that
\begin{equation}
    \begin{aligned}
            \|\nu a&^h\|_{L^\infty\dot B_{p,1}^{d/p}}+\|a^h\|_{L^1\dot B_{p,1}^{d/p}}+\|\cq u^h\|_{L^\infty\dot B_{p,1}^{d/p-1}}
            +\nu\|\cq u^h\|_{L^1\dot B_{p,1}^{d/p+1}}+\|(\cq u_t+\nabla a)^h\|_{L^1\dot B_{p,1}^{d/p-1} }\\
            &\lesssim \|\nu a_0^h\|_{\dot B_{p,1}^{d/p}}+\|w_0^h\|_{\dot B_{p,1}^{d/p-1}} 
             +(\vp+\wp)(\zp+\vp)+\nu^{-1}\yp(\zp+\vp)+\nu^{-1}\xp(\wp+\vp+\yp)\\
        &+\nu^{-1}\xp\yp+\int_0^T \left(\nu^{-1}\|a^h\|_{\dot B_{p,1}^{d/p}}+\nu^{-1}\|\nu a\|_{\dot B_{2,1}^{d/2+1}} \right)\left(\|\nu a^h\|_{\dot B_{p,1}^{d/p}}+\|a^\ell\|_{\dot B_{2,1}^{d/2-1}} \right)\de t\\
         &+\int_0^T\|(\nabla u,\nabla V)\|_{\dot B_{p,1}^{d/p} }\left(\|\cq u^h\|_{\dot B_{p,1}^{d/p-1} }+\|\cq u^\ell\|_{\dot B_{2,1}^{d/2-1} }+\|\nu a^h\|_{\dot B_{p,1}^{d/p}}+\|\nu a^\ell\|_{\dot B_{2,1}^{d/2}} \right)\de t,
    \end{aligned}
\end{equation}
where, analogously to Step 3 in the proof of Theorem \ref{thm:main}, we have used directly the equation \eqref{sist comp} to estimate $\|(\cq u_t+\nabla a)^h\|_{L^1\dot B_{p,1}^{d/p-1} }$ as
\begin{equation*}
    \begin{aligned}
        \|(\cq u_t+\nabla a)^h\|_{L^1\dot B_{p,1}^{d/p-1} }
\lesssim& \|\nu\cq  u^h\|_{L^1\dot B_{p,1}^{d/p+1} }+\|\cq H_2\|_{L^1 \dot B_{p,1}^{-1+d/p} }\\
&+\|\cq ((h+B)\cdot\nabla(h+B))\|_{L^1\dot B_{p,1}^{d/p-1} }+\|\nabla(|h+B|^2)\|_{L^1\dot B_{p,1}^{d/p-1} }.
    \end{aligned}
\end{equation*}

{\em \underline{Step 3} \quad Low-Frequency Estimates for the Compressible Part of \eqref{MHDcomp}.}\\
\\
To bound the low frequencies of $ (a,\cq u)$, we proceed exactly as in Step 3 of Theorem \ref{thm:main} until we get to the inequality \eqref{3.44New}, which we rewrite
\begin{equation*}
\begin{aligned}
     &\mathscr{L}_j(t)+c\text{ min}(\nu 2^{2j},\nu^{-1})\int_0^t \mathscr{L}_j\de \tau\leq \mathscr{L}_j(0)+c\int_0^t \|(\nabla u,\nabla V)\|_{L^\infty}\mathscr{L}_j\de \tau\\
        &\int_0^t \left(\|\dot\Delta_j((h+B)\cdot\nabla(h+B)\|_{L^2}+\|\dot\Delta_j\nabla(|B+h|^2)\|_{L^2}+\|[g_j,f_j,\nu\nabla g_j]\|_{L^2}\right)\de \tau.
\end{aligned}
\end{equation*}
Then, since for low frequencies 
$\text{ min}(\nu 2^{2j},\nu^{-1})=\nu 2^{2j}$, multiplying by $2^{j(d/2-1)} $ and summing over $2^j\nu\leq 1$, we get
\begin{align}
\|(a^\ell,\nu\nabla a^\ell,\cq u^\ell)&\|_{L^\infty \dot B_{2,1}^{d/2-1} }+\|(\nu a^\ell, \nu^2\nabla a^\ell,\nu \cq u^\ell)\|_{L^1 \dot B_{2,1}^{d/2+1} } \nonumber\\
&\lesssim \|(a_0^\ell,\nu\nabla a_0^\ell,\cq u_0^\ell)\|_{L^\infty \dot B_{2,1}^{d/2-1} } +\int_0^T \|(\nabla u,\nabla V)\|_{L^\infty}\|(a^\ell,\nu \nabla a^\ell,\cq u^\ell)\|_{\dot B_{2,1}^{d/2-1}}\de t\nonumber\\
&+\int_0^T \sum_{2^j\nu\leq 1 } 2^{(-d/2+1)j}\|\dot \Delta_j g,\nu\nabla\dot\Delta_j g\|_{L^2}\de t+\int_0^T \sum_{2^j\nu\leq 1 } 2^{(-d/2+1)j}\|\dot \Delta_j \cq f\|_{L^2}\de t\nonumber\\
&+\int_0^T \sum_{2^j\nu\leq 1 } 2^{(-d/2+1)j}\|\dot\Delta_j \cq((g+B)\cdot\nabla(h+B))\|_{L^2}\de t\nonumber\\
&+\int_0^T \sum_{2^j\nu\leq 1 } 2^{(-d/2+1)j}\|\dot\Delta_j\nabla(|h+B|^2)\|_{L^2}\de t.\label{est low comp1}
\end{align}
We now need to estimate the last four terms on the right-hand side of \eqref{est low comp1}. We start by considering the term $\|(\dot\Delta_j g,\nu\nabla \dot\Delta_j g)^\ell\|_{\dot B_{2,1}^{d/2-1} }$: we use the commutator estimate in Lemma \ref{commut est Lpsetting}, together with Lemma \ref{lemmastimaprodLP} and the embedding $\dot B_{2,1}^{d/2-1}\hookrightarrow \dot B_{p,1}^{d/p-1}$, to get that
\begin{equation}\label{est g}
    \begin{aligned}
        \int_0^T \sum_{2^j\nu \leq 1} & 2^{(-d/2+1)j}\|(\dot\Delta_j g,\nu\nabla \dot\Delta_j g)\|_{L^2}\\
        &\lesssim \int_0^T \left(\nu^{-1}\left(\|\nu\cq u^\ell\|_{\dot B_{2,1}^{d/2+1} }+\|\nu\cq u^h\|_{\dot B_{p,1}^{d/p+1} }\right)+\|(\nabla\cp u,\nabla V)\|_{\dot B_{p,1}^{d/p} }\right)\|a^\ell\|_{\dot B_{2,1}^{d/2-1} }\de t\\
        &+\int_0^T \nu^{-1}\left(\|\nu\cq u^\ell\|_{\dot B_{2,1}^{d/2+1}}+\|\nu\cq u^h\|_{\dot B_{p,1}^{d/p+1}}+\nu^{-1}\|(\nabla\cp u, \nabla V)\|_{\dot B_{p,1}^{d/p}}\right)\|\nu a^h\|_{\dot B_{p,1}^{d/p}}\de t.
    \end{aligned}
\end{equation}
We now move to the next terms on the right hand side of \eqref{est low comp1}. Since they are of the form $\cq(ab)$, we use Bony’s decomposition. In particular, if we denote by $\cq^\ell:= \dot S_{j_0+1}\cq $ then, by \eqref{Bony decomp1} and \eqref{Bony decomp2}, we have 
    \begin{equation*}
        \begin{aligned}
            \cq ^\ell(bc)=&\cq^\ell(\dot T_b c+\dot T_c b+R(b,c)) = \cq^\ell(\dot T_b c+R(b,c))+ [\cq^\ell,\dot T_c]b+\dot T_c \cq^\ell b.
        \end{aligned}
    \end{equation*}
Therefore, using Lemma \ref{continuity bony decomp} together with Lemma \ref{lem:commutator 3} one can show that, for all $2\leq p\leq \min \{4,\frac{2d}{d-2} \} $
\begin{equation}\label{est Ql}
    \|\cq(bc)^\ell\|_{\dot B_{2,1}^{d/2-1} }\lesssim \left(\|b\|_{\dot B_{p,1}^{d/p-1}}+\|b^\ell\|_{\dot B_{2,1}^{d/2-1}}\right)\|c\|_{\dot B_{p,1}^{d/p}}.
\end{equation}
This, toghether with Lemma \ref{composit est Lp} and Lemma \ref{commut est Lpsetting} allows us to estimate the term $\|\dot\Delta_j \cq f^\ell\|_{\dot B_{2,1}^{d/2-1} }$ as
\begin{equation}\label{est f}
    \begin{aligned}
        \int_0^T\sum_{j\in\Z}&2^{(-d/2+1)j} \|\dot\Delta_j\cq f^\ell\|_{L^2}\de t\lesssim \nu^{-1} \xp(\vp+\wp)+(\vp+\wp)(\zp+\vp) \\
        &+\nu^{-2}\yp^2(\zp+\vp)+\nu^{-1}\xp\yp+\nu^{-2}\xp^2\yp\\
        &+\int_0^T \left(\nu^{-1}\|a^h\|_{\dot B_{p,1}^{d/p} }+\nu^{-1}\|\nu a^\ell\|_{\dot B_{2,1}^{d/2+1} }\right)\left(\|\nu a^h\|_{\dot B_{p,1}^{d/p} }+\|a^\ell\|_{\dot B_{2,1}^{d/2-1} } \right)\de t\\
        &+\int_0^T \left(\|(\nabla\cp u,\nabla\cq u^h,\nabla V)\|_{\dot B_{p,1}^{d/p} }+\|\nabla\cq u^\ell\|_{\dot B_{2,1}^{d/2} }\right)\left(\|\cq u^\ell\|_{\dot B_{2,1}^{d/2-1} }+\|\cq u^h\|_{\dot B_{p,1}^{d/p-1} } \right)\de t.
    \end{aligned}
\end{equation}
Moreover, using again \eqref{est Ql} and Lemma \ref{lemmastimaprodLP}, we bound the terms involving the magnetic field as follows
\begin{equation}\label{est h,B}
\int_0^T  \sum_{2^j\nu\leq1} 2^{(-d/2+1)j} \left(\|\dot \Delta_j\cq ((h+B)\cdot\nabla(h+B))\|_{L^2}+\|\dot\Delta_j\nabla(|h+B|^2)\|_{L^2}  \right)\de t\lesssim (\vp+\wp)(\zp+\vp).
\end{equation}
Thus, we put \eqref{est g}, \eqref{est f} and \eqref{est h,B} into \eqref{est low comp1}, and we finally get the estimate
\begin{equation}
\begin{aligned}
     \|(a^\ell,\nu\nabla &a^\ell,\cq u^\ell)\|_{L^\infty \dot B_{2,1}^{d/2-1} }+\|(\nu a^\ell,\nu^2\nabla a^\ell,\nu\cq u^\ell)\|_{L^1\dot B_{2,1}^{d/2+1} }\lesssim \|(a_0,\nu\nabla a_0,\cq u_0)^\ell\|_{\dot B_{2,1}^{d/2-1} } \\
     &+\int_0^T \|(\nabla\cp u,\nabla V)\|_{\dot B_{p,1}^{d/p} }\left( \|a^\ell\|_{\dot B_{2,1}^{d/2-1}}+\|\nu a^h\|_{\dot B_{p,1}^{d/p}}  \right)\de t\\
     &+\int_0^T  \left( \nu^{-1}\|a^h\|_{\dot B_{p,1}^{d/p} }+\nu^{-1}\|\nu a^\ell\|_{\dot B_{2,1}^{d/2+1}} \right)\left( \|a^\ell\|_{\dot B_{2,1}^{d/2-1}}+\|\nu a^h\|_{\dot B_{p,1}^{d/p}} \right) \de t\\
     &+\int_0^T  \left( \|\cq u^\ell\|_{\dot B_{2,1}^{d/2+1}}+\|\cq u^h\|_{\dot B_{p,1}^{d/p+1}} \right)\left(\|a^\ell\|_{\dot B_{2,1}^{d/2-1}}+\|\nu a^\ell\|_{\dot B_{2,1}^{d/2}}+\|\nu a^h\|_{\dot B_{p,1}^{d/p}} \right)\de t\\
     &+\int_0^T  \left( \|(\nabla\cp u,\nabla\cq u^h,\nabla V)\|_{\dot B_{p,1}^{d/p}}+\|\nabla \cq u^\ell\|_{\dot B_{2,1}^{d/2}} \right)\left( \|\cq u^\ell\|_{\dot B_{p,1}^{d/p-1}} +\|\cq u^\ell\|_{\dot B_{2,1}^{d/2-1}} \right)\de t\\
         &+\nu^{-1}\xp(\vp+\wp)+\nu^{-1}\xp\yp+(\vp+\wp)(\zp+\vp)+\nu^{-2}\yp^2(\zp+\vp)+\nu^{-2}\xp^2\yp.
\end{aligned}    
\end{equation}
In order to conclude the estimate for the low frequencies in $\xp+\yp$, we use the compressible part of the momentum equation to bound directly $\|(\cq u_t+\nabla a)^\ell\|_{L^1\dot B_{2,1}^{d/2-1} }$. We then combine the estimates for the low frequencies with those for the high frequencies obtained in the previous Step, and applying Gronwall’s lemma, we obtain
\begin{equation}\label{Lpestcomp}
\begin{aligned}
     \xp+\yp\lesssim \ &e^{C\int_0^T \left( \|(\nabla\cp u, \nabla\cq u^h,\nabla V)\|_{\dot B_{p,1}^{d/p} }+\|\nabla\cq u^\ell\|_{\dot B_{2,1}^{d/2}}+\nu^{-1}\|a^h\|_{\dot B_{p,1}^{d/p} }+\nu^{-1}\|\nu a^\ell\|_{\dot B_{2,1}^{d/2+1}}  \right)\de t } \\
     &\times \biggl(\xp(0)+(\vp+\wp)(\zp+\vp)+\nu^{-2}\xp^2\yp+\nu^{-2}\yp^2(\zp+\vp) \\
    &+\nu^{-1}\yp(\zp+\vp)+\nu^{-1}\xp(\wp+\vp+\yp)+\nu^{-2}\zp\wp+\nu^{-3}\xp\yp+\nu^{-2}\vp^2 \biggr).
\end{aligned}   
\end{equation}
{\em \underline{Step 4} \quad Closure of the estimates.}\\
\\
From the estimate for the compressible part \eqref{Lpestcomp}, using that $\nu^{-1}\ll1 $ and \eqref{claim_xp+yp_zp+wp} we have
\begin{equation}\label{est:finale comp lp}
    \begin{aligned}
        \xp+\yp\lesssim &e^{C(\delta+\nu^{-1}D+M )}(\xp(0)+(M+\delta)^2+\nu^{-1}D^2\xp+\nu^{-1}D(\delta+M)\yp\\
        &+\nu^{-1}(M+\delta)\yp+\nu^{-1}(\delta+M+D)\xp).
    \end{aligned}
\end{equation}
We then use the bound \eqref{est:finale comp lp} and the embedding $\dot B_{2,1}^{d/2}\hookrightarrow \dot B_{p,1}^{d/p} $ to estimate the incompressible part as
\begin{equation}\label{est u^2 V^2}
    \begin{aligned}
        \|u\|_{\dot B_{p,1}^{d/p}}^2+ \|V\|_{\dot B_{p,1}^{d/p}}^2\lesssim& \|\cp u\|_{\dot B_{p,1}^{d/p}}^2+ \|\cq u^h\|_{\dot B_{p,1}^{d/p}}^2
        + \|\cq u^\ell\|_{\dot B_{p,1}^{d/p}}^2+\|V\|_{\dot B_{p,1}^{d/p}}^2\\
        \lesssim& 2^j2^{-j} \|\cp u\|_{\dot B_{p,1}^{d/p}}^2+2^j2^{-j}\|\cq u^h\|_{\dot B_{p,1}^{d/p}}^2+2^j2^{-j}\|\cq u^\ell\|_{\dot B_{2,1}^{d/2}}^2
        +2^j2^{-j}\|V\|_{\dot B_{p,1}^{d/p}}^2\\
        \lesssim& \|\cp u\|_{\dot B_{p,1}^{d/p-1}}\|\cp u\|_{\dot B_{p,1}^{d/p+1}}
        +\nu^{-1}\|\cq u^h\|_{\dot B_{p,1}^{d/p-1}}\|\nu\cq u^h\|_{\dot B_{p,1}^{d/p+1}}\\
        &+\nu^{-1}\|\cq u^\ell\|_{\dot B_{2,1}^{d/2-1}}\|\nu\cq u^\ell\|_{\dot B_{2,1}^{d/2+1}}
        +\|V\|_{\dot B_{p,1}^{d/p-1}}\|V\|_{\dot B_{p,1}^{d/p+1}}.
    \end{aligned}
\end{equation}
Inserting \eqref{est u^2 V^2} into \eqref{est zp+wp} we obtain
\begin{equation}
    \begin{aligned}
        \zp+\wp\lesssim &e^{C(M+\nu^{-1}D+\nu^{-1}D^2+\delta^2+M^2+\delta)}(\nu^{-1/2}DM+\nu^{-1}D(M+D+\delta)\wp\\
        &+\nu^{-1}(D+\delta+M)D+\nu^{-1}DM^2.
    \end{aligned}
\end{equation}
Hence, if we assume that
\begin{equation}\label{bound nu^-1D e delta}
    \nu^{-1}D+\nu^{-1}D^2\leq M+M^2 \quad\text{and} \quad \delta\leq\text{max}\{M,1\}
\end{equation}
we have shown that
\begin{equation}
    \begin{aligned}
        &\zp+\wp\lesssim e^{C(M+M^2)}(\nu^{-1/2}MD+\nu^{-1}D(M+D+1)\wp+\nu^{-1}D^2+\nu^{-1}D+\nu^{-1}DM), \\
        &\xp+\yp\lesssim e^{C(M+M^2)}(\xp(0)+M^2+1+\nu^{-1}(M+1+D+D^2+M^2)\yp\\
        &\hspace{1.5cm}+\nu^{-1}(M+1+D+D^2)\xp).       
    \end{aligned}
\end{equation}
Thus, if we further assume that
\begin{equation}\label{hp nu^-1(M+1+D+D^2+M^2)e^}
    \nu^{-1}(M+1+D+D^2+M^2)e^{C(M+M^2)}\ll1
\end{equation}
we obtain
\begin{align}
    &\xp+\yp\lesssim e^{C(M+M^2)}(\xp(0)+M^2+1),\\
    &\zp+\wp\lesssim De^{C(M+M^2)}(\nu^{-1/2}M+\nu^{-1}(M^2+D^2+D+1)).
\end{align}
Then, we define
\begin{align}
    &D:=Ce^{C(M+M^2)}(\xp(0)+M^2+1), \label{defDp}\\
    &\delta:=Ce^{C(M+M^2)}(\xp(0)+M^2+1)(\nu^{-1/2}M+\nu^{-1}(\xp(0)+M^2+1+(\xp(0)+M^2+1)^2)),\label{defDeltap}
\end{align}
and we consider $\nu$ large enough so that
\begin{equation}
    Ce^{C(M+M^2)}(\xp(0)+M^2+1)\leq\sqrt{\nu}.
\end{equation}
With this choice of the constants, the bounds in \eqref{bound nu^-1D e delta} and \eqref{hp nu^-1(M+1+D+D^2+M^2)e^} are satisfied.
Hence, with $D$ and $\delta$ as in \eqref{defDp} and \eqref{defDeltap}, the estimate \eqref{claim_xp+yp_zp+wp} is valid for all $ T<T_*$, and the a priori estimates on $a,\ v,\ b$ imply that the solution exists globally in time thanks to Theorem \ref{thm:continuation}. This concludes the proof.
\end{proof}

\section{Applications to Magnetic Reconnection}
\label{sec:magnetic reconnection}
In this section, we use our result to construct examples of solutions of the compressible MHD system that exhibit magnetic reconnection. The latter refers to a change in the topological structure of the integral lines of the magnetic field $b$. Specifically, given $t\in [0,T]$, we define a {\em magnetic line} $\gamma_t$ as any solution of the following (autonomous) system of ODEs
\begin{equation}
    \label{def:magnetic line}
    \frac{\de}{\de s}\gamma_t(s)=b(t,\gamma_t(s)).
\end{equation}

\begin{definition}\label{def:rec}
A (sufficiently regular) solution $(\rho, v,b,p)$ of \eqref{MHDcomp} 
shows magnetic reconnection if there exist $t_1, t_2\in[0,T]$ such that there is no homeomorphism of $\R^3$ mapping the set of integral lines of $b(t_1,\cdot)$ into integral lines of $b(t_2,\cdot)$.
\end{definition}
In the non-resistive case, i.e. $\eta=0$, Alfven's Theorem states that the integral lines of a sufficiently smooth magnetic field are transported by the fluid. In particular, the topology of the integral lines of the magnetic field is frozen under evolution.
This is based on the following observation: a smooth solution $b$ of the PDE
\begin{equation}\label{eq:alfven}
\begin{cases}
\partial_t b+(v\cdot \nabla) b=  (b\cdot\nabla)v,\\
b_{|{t=0}}=b_0,
\end{cases}
\end{equation}
satisfies the formula
\begin{equation}\label{eq:alfven2}
b(t,\Phi_t(x))=\nabla \Phi_t(x)b_0(x),
\end{equation}
where $\Phi_t$ is the flow of $v$, i.e.
\begin{equation}
\begin{cases}
\frac{\de}{\de t}\Phi_t(x)=v(t,\Phi_t(x)),\\
\Phi_0(x)=x.
\end{cases}
\end{equation}
The formula \eqref{eq:alfven2} implies that the magnetic lines of $b_0$ are transported by the fluid flow. Since $v$ is assumed to be smooth, $\Phi_t$ is a diffeomorphism and then at every time $t>0$, the integral lines of $b_0$ and $b(t,\cdot)$ are diffeomorphic. Thus, reconnection is forbidden.

In contrast, in the resistive case ($\eta > 0$) the topology of magnetic field lines can change: heuristically, magnetic diffusion allows for the breaking of topological rigidity. This phenomenon is observed both experimentally and numerically, see \cite{MagnPhys, Priest}, and recently rigorous mathematical examples have been provided for the incompressible model \eqref{MHDincomp}, see \cite{CCL, CL2, LP}.

In order to apply our result to the construction of solutions showing magnetic reconnection, it is convenient to first recall the main results of \cite{LP}. In that work, the authors introduce a condition on the initial data that ensures the existence of global strong solutions to the three-dimensional incompressible MHD system \eqref{MHDincomp}. 
Moreover, under this condition, they are able to exhibit an explicit class of initial configurations leading to magnetic reconnection. A distinctive feature of their framework is that the admissible data may be large in every critical space. 
The construction relies on a formulation in Els\"asser variables, in the spirit of \cite{HHW}, although the condition in \cite{LP} is of a different nature (in particular, the class of initial data they consider for the reconnection does not satisfy the smallness condition in \cite{HHW}).

Throughout this section, we restrict our attention to the case $\mu = \eta$. This simplification is physically relevant in astrophysical contexts, where both the Reynolds number and the magnetic Reynolds number are extremely large. In such regimes, the difference between the viscous and magnetic diffusivities is negligible, and taking $\mu = \eta$ does not qualitatively affect the dynamics of the system. 
Once global solutions to the incompressible system are available, Theorem \ref{thm:main} allows us to obtain corresponding global solutions to the compressible MHD equations \eqref{MHDcomp}, provided that the bulk viscosity parameter $\lambda$ is sufficiently large. Moreover, the compressible solutions remain close to their incompressible counterparts for all times.

To establish the occurrence of magnetic reconnection in the compressible setting, we then exploit the structural stability of the reconnection mechanism exhibited in \cite{LP}. This yields compressible solutions that inherit the reconnection properties of the incompressible ones.

We now briefly describe the results contained in \cite{LP}. First, the authors prove the following result (we use different notations). 
\begin{theorem}
Let $V_0, B_0\in H^r(\R^3)$ for some $r\geq 3$ be two divergence-free vector fields such that
$$
\|V_0-B_0\|_{H^r}\lesssim \e, \quad \mbox{and} \quad \|e^{\eta t\Delta}V_0\|_{L^2_tW^{k,\infty}_x}+\|e^{\eta t\Delta}B_0\|_{L^2_tW^{k,\infty}_x}\lesssim 1+ N^k, \quad\forall\, 1\leq k\leq r,
$$
for some constant $N\geq 1$ and some (small) constant $0<\e\leq cN^{-r-1}$ with $c>0$ sufficiently small. Then there exists a unique global smooth solution $(V,B)$ of \eqref{MHDincomp} with $\eta=\mu$ that satisfies
$$
\|V(t,\cdot)-e^{\eta t\Delta}V_0\|_{H^k}+\|B(t,\cdot)-e^{\eta t\Delta}B_0\|_{H^k}\leq C\e N^k, \quad \forall 0\leq k\leq r,\quad \forall t\geq 0.
$$
\end{theorem}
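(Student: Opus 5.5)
The natural route is to rewrite \eqref{MHDincomp} with $\mu=\eta$ in Els\"asser variables $Z^\pm:=V\pm B$. Since $\mu=\eta$, these satisfy
\begin{equation*}
\partial_t Z^\pm+Z^\mp\cdot\nabla Z^\pm-\eta\Delta Z^\pm+\nabla\Pi=0,\qquad \dive Z^\pm=0 .
\end{equation*}
The hypothesis $\|V_0-B_0\|_{H^r}\lesssim\e$ says that $Z^-_0$ is small, while $Z^+_0$ is large but its caloric extension is controlled by the $L^2_tW^{k,\infty}_x$ assumption. The nonlinearity is purely of $Z^+$--$Z^-$ interaction type (up to the pressure), so a large $Z^+$ transported by a small $Z^-$ stays close to the heat flow $e^{\eta t\Delta}Z^+_0$. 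Likewise, a small $Z^-$ transported by a large but integrable-in-time $\nabla Z^+$ stays small.

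I would set $Z^\pm_L:=e^{\eta t\Delta}Z^\pm_0$ and write $Z^\pm=Z^\pm_L+\zeta^\pm$, with $\zeta^\pm(0)=0$. Since $\|Z^-_0\|_{H^r}\lesssim\e$, it is simpler to estimate $Z^-$ directly rather than $\zeta^-$. I would run a bootstrap on a maximal time interval, starting from the local strong $H^r$ solution given by standard theory, with hypotheses
\begin{equation*}
\|Z^-(t)\|_{H^k}\le 2C_0\e N^k,\qquad \|\zeta^+(t)\|_{H^k}\le 2C_0\e N^k,\qquad 0\le k\le r .
\end{equation*}

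\textbf{Estimate for $Z^-$.} Energy estimates in $H^k$ give a commutator term $\int \partial^\alpha(Z^+\cdot\nabla Z^-)\cdot\partial^\alpha Z^-$. By Kato--Ponce this is bounded by
\begin{equation*}
\|\nabla Z^+\|_{L^\infty}\|Z^-\|_{H^k}^2+\|Z^+\|_{W^{k,\infty}}\|Z^-\|_{H^k}\|\nabla Z^-\|_{L^2},
\end{equation*}
with $Z^+$ replaced by $Z^+_L+\zeta^+$. The dissipation $\eta\|\nabla Z^-\|_{H^k}^2$ absorbs the derivative loss via Young. What remains is Gronwall with weight $\|Z^+_L\|_{W^{k,\infty}}^2$, whose time integral is $\lesssim(1+N^k)^2$ by assumption.

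\textbf{Estimate for $\zeta^+$.} The equation is forced by $Z^-\cdot\nabla Z^+_L$, which is $O(\e)$ times the linear profile. Duhamel plus energy estimates then give $\|\zeta^+\|_{H^k}\lesssim\e N^k$ times an exponential factor.

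The main obstacle is that a naive Gronwall exponent $\int\|\nabla Z^+_L\|_{L^\infty}\,dt$ is controlled only through $L^2_t$, not $L^1_t$. I would handle this by placing one factor $\nabla Z^+$ in $L^2_t$ against the dissipative $L^2_t\dot H^{k+1}$ norm of $Z^-$. This yields a factor $\exp(C\eta^{-1}\int\|Z^+_L\|_{W^{1,\infty}}^2)$, which depends only on the assumed bound with $k=1$, giving $\exp(CN^2/\eta)$. That exponential is presumably what forces the polynomial smallness $\e\le cN^{-r-1}$ to be replaced by a careful scheme: the weights should be taken as $N^k$ and interpolated level by level, as in \cite{LP}.

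Finally, returning to $V=(Z^++Z^-)/2$ and $B=(Z^+-Z^-)/2$ gives the stated bound, with the $Z^-$ contribution absorbed since $\|e^{\eta t\Delta}Z^-_0\|_{H^k}\lesssim\e$. Global smoothness follows from these uniform $H^r$ bounds and the standard continuation criterion.
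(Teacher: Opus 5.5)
The paper gives no proof of this statement. It is quoted from \cite{LP} and used as a black box, so your argument has to stand on its own, and as written it does not reach the stated bound. The Els\"asser reformulation, the smallness of $Z^-$, and the fact that the pressure is bilinear in $(Z^+,Z^-)$ are the right starting point.

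The problem is quantitative. The conclusion is $\|V(t)-e^{\eta t\Delta}V_0\|_{H^k}+\|B(t)-e^{\eta t\Delta}B_0\|_{H^k}\le C\e N^k$ with $C$ independent of $N$, and the only smallness available is the polynomial condition $\e\le cN^{-r-1}$. Your argument generates exponential factors in $N$ at several points:
\begin{itemize}
\item The $Z^-$ step closes by Gronwall with weight $\|Z^+_L\|^2_{W^{k,\infty}}$, i.e.\ with a factor $\exp(C\eta^{-1}(1+N^k)^2)$.
\item The $\zeta^+$ step carries ``an exponential factor''.
\item Even your refined version produces $\exp(CN^2/\eta)$.
\end{itemize}
Any such factor destroys the claimed estimate. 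The bootstrap would then close only if $\e\, e^{CN^2/\eta}\ll 1$, which the hypotheses do not give. Deferring this to \cite{LP} leaves out exactly the step that proves the theorem.

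The missing idea is to avoid Gronwall entirely and induct on the number of derivatives. The point is that every term at level $k$ contains a factor of $Z^-$ already controlled at a lower level.
\begin{itemize}
\item At level $0$, transport and pressure cancel exactly, so $\|Z^-(t)\|_{L^2}^2+2\eta\int_0^t\|\nabla Z^-\|_{L^2}^2\le\|Z^-_0\|_{L^2}^2\lesssim\e^2$.
\item At level $k$, take $Z^+_L$ in place of $Z^+$. The commutator pieces $\partial^\beta Z^+_L\cdot\nabla\partial^{\alpha-\beta}Z^-$ with $|\beta|=j\geq 2$ integrate in time to at most $\|Z^+_L\|_{L^2_tW^{j,\infty}}\|Z^-\|_{L^\infty_t\dot H^{k+1-j}}\|Z^-\|_{L^2_t\dot H^{k}}\lesssim\eta^{-1/2}\e^2N^{2k}$. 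Both $Z^-$ factors here are controlled by lower levels.
\item For $|\beta|=1$, use $\|\nabla^kZ^-\|_{L^2}^2\le\|\nabla^{k-1}Z^-\|_{L^2}\|\nabla^{k+1}Z^-\|_{L^2}$ and Young:
\[
\|\nabla Z^+_L\|_{L^\infty}\|\nabla^kZ^-\|_{L^2}^2\le\tfrac{\eta}{4}\|\nabla^{k+1}Z^-\|_{L^2}^2+C\eta^{-1}\|\nabla Z^+_L\|_{L^\infty}^2\|\nabla^{k-1}Z^-\|_{L^2}^2 .
\]
The $L^2_t$ weight now only multiplies a norm already bounded by $\e N^{k-1}$.
\end{itemize}
This gives $\|Z^-\|_{L^\infty_t\dot H^k}+\eta^{1/2}\|Z^-\|_{L^2_t\dot H^{k+1}}\le C_k\e N^k$ with no exponential.

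The same device handles $\zeta^+$. At $k=r$ you must integrate one derivative by parts onto $\zeta^+$ and absorb it by dissipation, because $\nabla^{r+1}Z^+_L$ is not covered by the hypothesis.

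Your $H^k$ bootstrap also does not control $\|\zeta^+\|_{W^{k,\infty}}$. So the $\zeta^+$--$Z^-$ interactions, including $Z^-\cdot\nabla\zeta^+$, must be estimated in $L^2$-based norms, with Sobolev embedding on the lower-order factor and the dissipation norms included in the bootstrap. These quadratic small terms are what consume powers of $N$, and they are where $\e\le cN^{-r-1}$ (with $r\geq 3$) is actually used.
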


A choice of initial data which satisfies the above assumptions is the following:
\begin{equation}
    V_0=M\curl (\phi B_N),\qquad B_0=M\curl (\phi B_N)+\e \curl w, 
\end{equation}
where $M>0$, $\phi, w\in H^r(\R^3)$ and $B_N$ is a Beltrami field with frequency $N>1$, namely a solution of
$$
\curl B_N= NB_N.
$$
Then, one can show that (see \cite{CL})
$$
\|V_0\|_{\dot{B}^{-1}_{\infty,\infty}}\simeq\|B_0\|_{\dot{B}^{-1}_{\infty,\infty}}\simeq M,
$$
implying that the initial datum is large in any critical space ($M$ is arbitrary). \\

The second main result of \cite{LP} is the following.
\begin{theorem}\label{thm:renato claudia}
Given any constants $M,T>0$ there exist two smooth
(finite energy) divergence-free vector fields $(V_0,B_0)$ in $\R^3$ such that $\|V_0\|_{\dot{B}^{-1}_{\infty,\infty}}\simeq \|B_0\|_{\dot{B}^{-1}_{\infty,\infty}}\simeq M$ and \eqref{MHDincomp} admits a unique global smooth solution $(V,B)$ with initial datum $(V_0,B_0)$, such that the
magnetic lines at time $t=0$ and $t=T$ are not topologically equivalent, meaning that there is no homeomorphism of $\R^3$ into itself mapping the magnetic lines of $B(0,\cdot)$ into those of $B(T,\cdot)$.
\end{theorem}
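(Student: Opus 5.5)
The plan is to follow the construction of \cite{LP}: apply the global stability theorem quoted just above to an explicit family of data, and detect the change of topology through a robust invariant of the field lines, namely the number of nondegenerate zeros (stationary magnetic lines) in a suitable region. Such zeros are mapped to zeros by any homeomorphism sending integral lines to integral lines.

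\textbf{Step 1 (data).} Fix $M,T>0$. Take
\[
V_0=M\curl(\phi B_N),\qquad B_0=M\curl(\phi B_N)+\e\curl w,
\]
where $B_N$ is a Beltrami field with $\curl B_N=NB_N$, $\phi$ is a fixed smooth cutoff with large support, and $w\in H^r$. The parameters $N\gg1$ and $\e\le cN^{-r-1}$ will be fixed at the end. As recalled above, $\|V_0\|_{\dot B^{-1}_{\infty,\infty}}\simeq\|B_0\|_{\dot B^{-1}_{\infty,\infty}}\simeq M$. Since $\curl(\phi B_N)=N\phi B_N+\nabla\phi\times B_N$, we have $e^{\eta t\Delta}V_0=e^{-\eta N^2t}MN\phi B_N$ up to commutator errors that are lower order in $N$. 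This gives the bound $\|e^{\eta t\Delta}V_0\|_{L^2_tW^{k,\infty}}\lesssim 1+N^k$ after normalizing $M$ against the factor $N^{-1}$ coming from the $L^2_t$ integral of $e^{-\eta N^2 t}$; the same holds for $B_0$. The smallness $\|V_0-B_0\|_{H^r}=\e\|\curl w\|_{H^r}\lesssim\e$ is immediate. The stability theorem then yields a global smooth solution with
\[
\|B(t)-e^{\eta t\Delta}B_0\|_{H^k}\le C\e N^k.
\]

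\textbf{Step 2 (the two times).} At $t=0$, $B(0)=MN\phi B_N+O_{C^1}(M+\e)$. Using the flexibility of Beltrami fields (Enciso--Peralta-Salas type constructions), choose $B_N$ so that in a fixed ball it has a prescribed number $m_0$ of nondegenerate zeros, with $|B_N|$ bounded below away from small neighbourhoods of them. Such zeros persist under $C^1$-small relative perturbations, so $B(0)$ has exactly $m_0$ zeros there. At $t=T$, the Beltrami part has size $MN^{k+1}e^{-\eta N^2T}$ in $C^k$, while
\[
B(T)=\e\, e^{\eta T\Delta}\curl w+O_{C^1}\bigl(\e N^{3}+MN^{2}e^{-\eta N^2T}\bigr).
\]
This error term is too large compared with $\e$. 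I would therefore use the sharper estimate of \cite{LP}, or more precisely its $k=0,1$ version with $\e^2$-type errors coming from the quadratic structure of the Els\"asser variables (the difference $V-B$ stays $O(\e)$, so the nonlinear correction is of order $\e\cdot\e N^k$). The aim is
\[
B(T)=\e\bigl(e^{\eta T\Delta}\curl w+o_{C^1}(1)\bigr).
\]
Choose $w$ so that $e^{\eta T\Delta}\curl w$ has $m_1\neq m_0$ nondegenerate zeros in the same region, and then take $N$ large, so that the $e^{-\eta N^2T}$ term beats any power of $N$.

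\textbf{Step 3 (topology).} A homeomorphism of $\R^3$ mapping integral lines of $B(0)$ onto those of $B(T)$ must map singleton orbits to singleton orbits. It therefore induces a bijection between the zero sets, which forces the total zero counts to agree. So one needs to control the zero sets globally, and not only in the ball. The main obstacle is this global count, together with the $C^1$ closeness at time $T$ at the right relative scale $\e$.

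For the far field, I would first try to prescribe the structure at infinity: take $\phi B_N$ compactly supported (after the usual cutoff corrections, keeping the data divergence-free since they are curls). Then choose $w$ so that $\curl w$ is explicit, with zeros only at controlled points and $|e^{\eta T\Delta}\curl w|$ bounded below by a decaying profile outside a compact set. Finally, use weighted $C^1$ estimates for the perturbation, obtained from $H^k\hookrightarrow C^1$ plus the spatial decay of the heat kernel, to exclude spurious zeros at infinity at both times. If exact zero counting proves too delicate, I would instead use as invariant a hyperbolic periodic orbit with a prescribed knot type present at one time and absent at the other, as in \cite{LP}.
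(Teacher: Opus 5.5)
There is a genuine gap, and it sits in your Step 3; your choice in Step 2 creates it. By comparing exact zero counts $m_0\neq m_1$ you must control the zero sets of $B(0)$ and $B(T)$ on all of $\R^3$. You flag this as the main obstacle but never resolve it, and the remedies you propose make it harder. Enciso--Peralta-Salas-type Beltrami fields are prescribed only on a ball and give no information about zeros of $B_N$ elsewhere. If $\phi$ is compactly supported, then $B(0)=\e\curl w$ outside $\mathrm{supp}\,\phi$, and there is a layer where $MN\phi$ and $\e|\curl w|$ are comparable, so spurious zeros cannot be excluded.

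The missing idea is the one in the construction the paper recalls from \cite{LP} (properties $(i)$--$(v)$): take $m_0=0$ globally and compare ``no zero'' with ``some zero''.
\begin{itemize}
\item Choose $B_N$ with no zeros and $|B_N|$ bounded below, e.g.\ $B_N=(\sin Nx_3,\cos Nx_3,0)$.
\item Let $\phi$ decay only polynomially, so that $|\nabla\phi|\lesssim\phi$.
\item Write the perturbation as $\e e^{-\eta T\Delta}\curl(\psi W)$ with $\psi$ decaying exponentially.
\end{itemize}
Then $|M\curl(\phi B_N)|\geq M(N-C)\phi$ dominates the perturbation at every point of $\R^3$, so $B_0$ has no zeros at all. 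At time $T$ you only need to produce one zero, which is a purely local statement: $\e^{-1}B(T)$ is $C^1$-close on a ball to $\curl(\psi W)$, and its hyperbolic zero persists. A homeomorphism carrying integral lines of $B(0)$ onto those of $B(T)$ would have to send the singleton line $\{p\}$ at that zero to a singleton line, i.e.\ to a zero of $B(0)$. No counting and no control at infinity at time $T$ are needed.

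Separately, your justification for $B(T)=\e\bigl(e^{\eta T\Delta}\curl w+o_{C^1}(1)\bigr)$ is not correct. Only $Z^-=V-B$ is $O(\e)$, while $Z^+=V+B$ is large. So the Els\"asser cross terms $Z^\mp\cdot\nabla Z^\pm$ are of size up to $\e MN^2$ pointwise and of order $\e M$ after time integration. That is comparable to the signal, not of order $\e\cdot\e N^k$.

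What actually makes them negligible at time $T$ is frequency separation. These forcings live at frequencies $\sim N$, where the heat semigroup together with the factor $e^{-\eta N^2t}$ carried by the large Beltrami part suppresses them by time $T$. Only two kinds of interaction reach the low-frequency part of $B(T)$: low--low interactions, which are genuinely $O(\e^2)$, and high--high back-reactions, which gain negative powers of $N$. Since $\e$ is only polynomially small in $N$ while $e^{-\eta N^2T}$ beats every power, this is what gives an $o(\e)$ error.
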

In order to prove the reconnection result, the authors of \cite{LP} make the particular choice of
\begin{equation}\label{def:beltrami riconnessione}
B_0=M\curl (\phi B_N)+\e e^{-\eta T \Delta}\curl(\psi W),
\end{equation}
where the parameters are chosen so that
\begin{itemize}
    \item $M>0$ is the size of the initial datum;
    \item $T>0$ is the target reconnection time;
    \item $0<\e\ll 1$ is the size of the difference $V_0-B_0$;
    \item $N>1$ is a large parameter.
\end{itemize}
Concerning the vector fields,
\begin{itemize}
    \item[$(i)$] $B_N$ is a high frequency Beltrami field with no zeros;
    \item[$(ii)$] $W$ is a smooth bounded vector field that has some null points;
    \item[$(iii)$] $\phi$ and $\psi$ are smooth functions that decrease polinomially and exponentially fast at infinity;
    \item[$(iv)$] $B_0$ has no equilibrium points;
    \item[$(v)$] $\curl(\psi W)$ has at least one hyperbolic equilibrium.
\end{itemize}
Note that the properties $(iv)$ and $(v)$ are robustly stable under $C^1$-perturbations. Hence, showing that the solution $b(T,\cdot)$ is close to $\curl(\psi W)$ at time $T$ implies that equilibria must have formed at some intermediate time. We also remark that zeros of the magnetic field are commonly observed as reconnection sites, see \cite{MagnPhys, Priest}, and the loss/creation of equilibrium points indicates a change in the magnetic topology. \\

Having recalled the incompressible construction of \cite{LP}, we now explain how our Theorem \ref{thm:main} applies to this setting. 
First of all, in order to apply the stability result in Theorem \ref{thm:main}, we need to verify that the class of initial data constructed in Theorem \ref{thm:renato claudia} belongs to the (smaller) space $\dot B^\frac12_{2,1}(\R^3)\subset \dot B^{-1}_{\infty,\infty}(\R^3)$.
In \cite{CL} it is shown that this class of vector fields satisfy the following
\begin{equation}\label{bound H^r}
 \|V_0\|_{H^r,}   \|B_0\|_{H^r}\leq C_r,
\end{equation}
for all $r\geq 0$. Thus, by Proposition \ref{prop:real interpolation} we have that $V_0,B_0\in\dot B^\frac12_{2,1}(\R^3)$.
Thus, the global solutions built in \cite{LP}, together with their reconnection mechanism, provide a suitable reference dynamics for the singular limit considered in this work. By Theorem~\ref{thm:main}, compressible solutions with sufficiently large bulk viscosity remain uniformly close to these incompressible solutions for all times, and in particular inherit their qualitative features. As a consequence, the reconnection scenario established in \cite{LP} persists in the compressible regime. This is formalized in the following corollary, whose proof follows directly from the stability estimate in Theorem~\ref{thm:main} and the structural properties of the solutions constructed in \cite{LP}.

\MainTheoremC*

\begin{remark}
Note that once the global solution is constructed in the critical space, one can propagate additional Sobolev regularity of the initial data. Indeed, the velocity field enjoy the regularity
$$
\nabla^2 v \in L^1((0,+\infty);\dot B^{1/2}_{2,1}(\R^3)),
$$
which is equivalent to
$$
v \in L^1((0,+\infty);\dot B^{5/2}_{2,1}(\R^3))
\hookrightarrow
L^1((0,+\infty);W^{1,\infty}(\R^3)).
$$
As a consequence, the Lipschitz regularity allows one to control the transport terms in the magnetic equation and to close standard energy estimates for higher Sobolev norms. In particular, the regularity of the initial datum $b_0$ is preserved and
interpolating between the critical Besov space $\dot B^{1/2}_{2,1}$ and a higher Sobolev space one can control the difference $\|b(t,\cdot)-B(t,\cdot)\|_{C^1}$, ensuring that the two vector fields remain close in $\mathcal C^1$, thus preserving the associated geometric structures $(iv)$ and $(v)$.
\end{remark}

\subsection*{Acknowledgements}
The authors are partially supported by INdAM-GNAMPA and by the projects PRIN2020 ``Nonlinear evolution PDEs, fluid dynamics and transport equations: theoretical foundations and applications” and PRIN2022 ``Classical equations of compressible fluid mechanics: existence and properties of non-classical solutions''.


\end{document}